\newtheorem{theorem}{Theorem}[section]
\newtheorem{lemma}{Lemma}[section]
\newtheorem{exam}{Example}[section]
\newtheorem{corollary}{Corollary}[section]
\newtheorem{definition}{Definition}[section]
\newtheorem{remark}{Remark}[section]
\newtheorem{assumption}{Assumption}[section]
\def \Re{{\operatorname{Re}}}
\def \col{{\operatorname{col}}}
\def \Ri{{\operatorname{Ri}}}
\def \Rank{{\operatorname{Rank}}}
\def \int{{\operatorname{Int}}}
\definecolor{darkblue}{rgb}{0.0, 0.0, 0.55}
\begin{document}

\title{Distributed Optimal  Output Consensus Control of Heterogeneous Multi-Agent Systems with Safety Constraints}

\author{Ji Ma, Shu Liang, and Yiguang Hong,~\IEEEmembership{Fellow,~IEEE}\vspace{-9pt}
\thanks{Manuscript received \today. (Corresponding author:
Yiguang Hong.)}
\thanks{ J. Ma is with the Department of Automation, Xiamen, Xiamen 361005, China (e-mail: maji08@xmu.edu.cn). }
\thanks{
Shu Liang and Yiguang Hong are with the Department
of Control Science and Engineering, Tongji University, Shanghai, 201210,
China (e-mail: sliang@tongji.edu.cn; yghong@iss.ac.cn).}}
\maketitle

\begin{abstract}
In this paper, we develop a novel dynamic distributed optimal safe consensus protocol to achieve simultaneously safety requirements and output optimal consensus. Specifically, we construct a distributed projection optimization algorithm with an expanding constraint set in the decision-making layer, while we propose a reference-tracking safety controller to ensure that each agent's output remains within a shrinking safety set in the control layer. We also establish the convergence and safety analysis of the closed-loop system using the small-gain theorem and time-varying control barrier function (CBF) theory, respectively.
Besides, unlike previous works on distributed optimal consensus, our approach does not require prior knowledge of the cost function the local objective or gradient function and adopts a mild assumption on the dynamics of multi-agent systems (MASs) by using the transmission zeros condition.
 \end{abstract}
\begin{IEEEkeywords}
Distributed optimal consensus, Safety constraint, Expanding set projection method, Control barrier function, Small-gain theorem.
\end{IEEEkeywords}

\section{INTRODUCTION}

\IEEEPARstart{I}{n} recent years, distributed optimal consensus has attracted increasing attention due to its extensive applications in various fields such as machine learning, sensor networks, smart grids, and resource allocation \cite{he2018distributed,doan2020distributed,chen2019minimum,li2022survey,dai2019distributed}. The objective of distributed optimal consensus is to design a distributed controller that makes each agent's state or output reach a common optimal point of the sum of convex functions through individual computation and local communication with neighboring agents.
Many distributed optimal consensus algorithms have been developed, such as subgradient algorithms \cite{nedic2009distributed,lou2017privacy}, dual averaging algorithms \cite{wang2010control}, the distributed inexact gradient method, alternating direction methods of multipliers \cite{shi2014linear,liu2019communication}.
For distributed constrained optimization,
 \cite{zeng2016distributed,liu2017convergence} have developed projected algorithms with sublinear convergence rates. Also, \cite{li2021exponentially} have developed a distributed constrained optimization algorithm with an exponential convergence rate.


Many previous works on distributed optimal consensus of multi-agent systems (MASs) do not consider agents' dynamics and simply use the trivial first-order integrator model.
Recently, researchers have begun to deal with various agents' dynamics in distributed control and optimization protocols. For instance, \cite{liu2015second} investigated the distributed optimization problem for MASs with double-integrator dynamics, while  \cite{zhang2015distributed} considered MASs whose dynamics are described by chains of integrators.
The authors in \cite{zhao2017distributed} have designed two distributed adaptive control protocols to address the distributed optimal consensus problem for linear time-invariant MASs.
Also,  \cite{taaang2018optimal,li2019distributed,yu2021new,tang2020optimal}  adopted the ``decision-making and control'' framework, which consists of a distributed optimization algorithm and tracking control law, to solve distributed optimal consensus for linear MASs. Additionally,  \cite{liu2021distributed}  extended this framework to consider the distributed optimal consensus of nonlinear MASs and used the small-gain theorem to establish the convergence.

Safety is an essential component of practical engineering design.
In many control problems, such as multi-robot control and power system control, the controller must drive the system toward an optimal state and avoid unsafe regions.
Most existing works \cite{zhao2017distributed,taaang2018optimal,li2019distributed,yu2021new,tang2020optimal,liu2021distributed} did not consider the safety aspect.
To address the safety problem, two main approaches have emerged: the Barrier Lyapunov Function (BLF) approach \cite{yu2019barrier,tee2009barrier}, and the Control Barrier Function (CBF) \cite{ames2019control,ames2016control}.
The authors in \cite{xu2015robustness}  integrated the Control Lyapunov Function (CLF) and CBF in a unified framework by using quadratic programming. This method effectively addresses the situations where control objectives coincide with safety constraints. In such cases, the closed-loop system prioritizes safety and may sacrifice control objectives to ensure that the system does not violate these safety constraints.

The main challenge to achieve both the optimal consensus and safety for  high-order MAS can be briefly stated as follows.
On the one hand, the optimal solution may lie on the boundary of the satety constraints.
As a result, any strong safety control method that guarantees an interior-point property is inapplicable.
On the other hand, the inertia of the MASs caused by agents' dynamics can make the trajectory run out of the safety zone when it is near the boundary.
We further give some detailed explanations in Example \ref{ex1}.

Motivated by these discussions, we investigate the distributed output optimal consensus control problem of high-order MASs subject to safety constraints.
The main contributions of this work are as follows.


 1) We design a dynamic decision-making and control protocol, which ensures both safety requirements and the optimality, even when the optimal point is on the safety zone's boundary.
To this end, we construct a time-varying expanding set with an adaptive expansion speed. Then, we design a distributed optimization algorithm by using projections with respect to this expanding set and a  reference-tracking controller with a shrinking safety set corresponding to the expanding set.

2) The decision-making design  achieves an input-to-state (ISS) stability to the reference-tracking error, which is a key step for the convergence  of the whole  closed-loop systems.
Also, in contrast to the algorithms with sublinear convergence rates in the literature \cite{zeng2016distributed,liang2017distributed}, our algorithm exhibits an exponential rate.

3) Compared with the existing works \cite{taaang2018optimal,tang2020optimal}, the exact information of the local objective function or gradient function is not required to be known prior. Our methodology enables each agent to utilize only the measured gradient information at its output. In addition, compared with the minimum
phase system assumption in \cite{taaang2018optimal,tang2020optimal} and the rank condition on the dynamics of MASs in \cite{li2019distributed,yu2021new}, we relax them in terms of a transmission zeros condition.

The remainder of the paper is organized as follows: Section \ref{problemstatement} presents the background knowledge and problem statement. Section \ref{design} details the proposed distributed dynamic decision-making and control. Sections \ref{Convergenceanalysis} and \ref{sany} give the convergence and safety analysis of the closed-loop system, respectively. Section \ref{exam} gives a numerical example, followed by the conclusion in Section \ref{con}.

\textit{Notations:}
We use $\mathbb{R}^n$ to denote the Euclidean space of dimension $n$. We write $I_{n}$ for the identity matrix of dimension $n$, $\mathbf{1}_{n}$ for the $n$-dimension column vector with all entries equal to $1$, and $\mathbf{0}_{m\times{n}}$ for the $m\times{n}$ dimension matrix with all entries equal to $0$. We use $\|x\|$ to denote the standard Euclidean norm of a vector $x$ and $\langle\cdot,\cdot\rangle$ to denote the inner product of two vectors. We write $\|W\|$ for the matrix norm induced by the vector norm $\|\cdot\|$. We let $W^{T}$ denote the transpose of a matrix $W$. We use $d(x,y):=\|x-y\|$ to denote the distance between vectors $x$ and $y$. The projection operator of a vector $x$ onto a convex set $\Omega$ is denoted by $P_{\Omega}(x)$, where $P_{\Omega}(x):=\text{argmin}_{v\in\Omega}\|v-x\|^2$. For any two subsets $A,~B\subseteq\mathbb{R}^n$, we denote $A+B:=\{z=x+y\in\mathbb{R}^n|x\in A,~y\in B\}$, $A/B:=\{x\in\mathbb{R}^n|x\in A,~x\notin B\},$ and $\int(A)=A/\partial A.$ A continuous function $\alpha:[0,a)\rightarrow[0,+\infty)$ is said to be a class $\mathcal{K}$ function if $\alpha$ is strictly increasing and $\alpha(0)=0.$  A continuous function $\beta:[0,a)\times[0,+\infty)\rightarrow[0,+\infty)$ is said to be a class $\mathcal{KL}$ function if $\beta(x,y)$ is a  class $\mathcal{K}$ function with respect to $x$, and is decreasing  with respect to $y$ and $\lim_{y\rightarrow+\infty}\beta(x,y)=0.$

\section{Preliminaries and Problem Formulation}\label{problemstatement}
\subsection{Graph theory}
The information exchanged among agents is captured by an undirected and connected graph
$\mathcal{G}=(\mathcal{V},\mathcal{E})$ with $N$ agents, where $\mathcal{V}=\{1,\ldots,N\}$ denotes the vertex set and $\mathcal{E}\subseteq \mathcal{V}\times{\mathcal{V}}$ denotes the edge set. The edge $(i,j)\in \mathcal{E}$ if and only if agents $i$ and $j$ communicate with each other. The graph $\mathcal{G}$ is called connected if there always exists a path
between any two different nodes of the graph. Let $\mathcal{A}=[a_{ij}]_{N\times N}$ be the adjacency matrix of graph $\mathcal{G}$, namely $a_{ij}=1$ if $(i,j)\in\mathcal{E}$, and $a_{ij}=0$, otherwise. We use $D=\text{diag}(d_1,\ldots,d_N)$ to denote the degree matrix with $d_i=\sum_{j=1}^N a_{ij},~\forall i\in{\mathcal{V}}$. The Laplacian matrix of graph $\mathcal{G}$ is denoted by $\mathcal{L}_{\mathcal{G}}=D-\mathcal{A}$. All eigenvalues of $\mathcal{L}_{\mathcal{G}}$ are non-negative and real numbers and can be sorted in ascending order by $0=\lambda_1<\lambda_2\ldots\le\lambda_N$.

\subsection{Control Barrier Function}

We provide the definition for the time-varying Control Barrier Function (CBF):
\begin{definition}\label{def-cbf}
(\cite[Definition 1]{xu2018constrained}) Considering a time-invariant system
\begin{equation}\label{linear sys}
\dot{x}(t)=Ax(t)+Bu(t)
\end{equation}
with the state $x\in\mathbb{R}^n$ and the input $u\in\mathbb{R}^m$, the function $h(x,t)$ is called a time-varying Control Barrier Function (CBF) for system~\eqref{linear sys} if the following condition holds:
\begin{equation}\label{CBF1}
 \sup_{u\in \mathbb{R}^m}\Bigg\{\frac{\partial(h(x,t))}{\partial{x}}(Ax+Bu)+\frac{\partial(h(x,t))}{\partial{t}}+\alpha(h(x,t))\Bigg\}\geq0
\end{equation}
for all $x\in\mathbb{R}^n$ and $t\geq 0$, where $\alpha(\cdot)$ is a class $\mathcal{K}$ function.
\end{definition}
\vspace{0.4em}

The following result follows from \cite[Proposition 1]{xu2018constrained}.
\begin{lemma}\label{leas}
Let $h(x,t)$ be a time-varying CBF for the system \eqref{linear sys} with $h(x(0),0)\geq0$.
Define
\begin{equation}
\begin{aligned}
K(x,t)&=\Big\{u\in\mathbb{R}\Big|\frac{\partial(h(x,t))}{\partial{x}}(Ax+Bu)+\frac{\partial(h(x,t))}{\partial{t}}\\
&\quad\quad+\alpha(h(x,t))\geq0,~\forall t\geq0\Big\}
\end{aligned}
\end{equation}
where $\alpha(s)$ is a class $\mathcal{K}$ function satisfying $\alpha(s)=as,~s\geq0$ for some positive constant $a$. Then $K(x,t)$ is Lipschitz in $x\in \mathbb{R}^n$. Moreover,
the CBF $h(x,t)\geq 0$, for any controller $u\in K(x,t)$ in the system \eqref{linear sys} and all $t\geq0$.
\end{lemma}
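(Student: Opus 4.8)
The statement bundles two largely independent conclusions, and I would treat them separately: the forward invariance of $\{h\ge 0\}$ under any admissible input, and the Lipschitz regularity of the set-valued map $x\mapsto K(x,t)$. The first is a routine comparison-lemma argument; the second is the genuinely technical part and the one I expect to carry the weight of the proof.

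For the safety conclusion, I would fix any controller $u\in K(x,t)$ and set $V(t):=h(x(t),t)$ along the corresponding trajectory of \eqref{linear sys}. By the chain rule $\dot V=\frac{\partial h}{\partial x}(Ax+Bu)+\frac{\partial h}{\partial t}$, and membership $u\in K(x,t)$ together with $\alpha(s)=as$ gives the differential inequality $\dot V\ge -aV$. Writing $g(t):=e^{at}V(t)$ yields $\dot g=e^{at}(\dot V+aV)\ge 0$, so $g$ is nondecreasing and therefore $V(t)=e^{-at}g(t)\ge e^{-at}g(0)=e^{-at}h(x(0),0)\ge 0$ for all $t\ge 0$, where the last inequality uses the hypothesis $h(x(0),0)\ge 0$. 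This gives $h(x(t),t)\ge 0$ as required; note the linear rate $a$ in $\alpha$ is exactly what makes the comparison bound explicit.

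For the Lipschitz conclusion, I would first recast the inequality defining $K(x,t)$ into the affine-in-$u$ form
\[ \Big(\tfrac{\partial h}{\partial x}B\Big)u\;\ge\; -\tfrac{\partial h}{\partial x}Ax-\tfrac{\partial h}{\partial t}-a\,h(x,t), \]
so that $K(x,t)=\{u\in\mathbb{R}^m : c(x,t)^{T}u\ge b(x,t)\}$ is a half-space with data $c(x,t):=B^{T}(\partial h/\partial x)^{T}$ and $b(x,t):=-\frac{\partial h}{\partial x}Ax-\frac{\partial h}{\partial t}-ah(x,t)$. Since $h$ is continuously differentiable, both $c$ and $b$ are locally Lipschitz in $x$ ($x\mapsto Ax$ is linear, and the partial derivatives of $h$ are Lipschitz on compacta). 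The plan is then to invoke the standard Lipschitz dependence of a polyhedral feasible-set map on its data: for a parametrized affine inequality satisfying a constraint qualification, $x\mapsto K(x,t)$ is Lipschitz in the Hausdorff metric. This is precisely \cite[Proposition 1]{xu2018constrained}, from which the claim follows.

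The crux, and the step I expect to be the main obstacle, is the constraint qualification behind that Hausdorff estimate. Where the control coefficient $c(x,t)=B^{T}(\partial h/\partial x)^{T}$ vanishes, $K(x,t)$ degenerates to either $\emptyset$ or all of $\mathbb{R}^m$, and the Hausdorff distance can jump, so the Lipschitz bound is not automatic and must be supported by a Slater-type or uniform lower-bound condition $\|c(x,t)\|\ge\delta>0$ on the region of interest. Here the fact that $h$ is a valid CBF, i.e.\ the supremum in \eqref{CBF1} is nonnegative, is what rules out the empty-valued case by guaranteeing $K(x,t)\neq\emptyset$ and supplies the qualification needed to carry the argument of \cite[Proposition 1]{xu2018constrained} through; checking this condition carefully is where the real effort lies, whereas the safety half of the lemma is essentially immediate once the comparison inequality is in hand.
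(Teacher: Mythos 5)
Your proposal is correct, but note that the paper does not actually prove this lemma at all: it imports it wholesale with the one-line justification that it ``follows from \cite[Proposition 1]{xu2018constrained}.'' Your treatment is therefore strictly more self-contained than the paper's. The invariance half of your argument --- setting $V(t)=h(x(t),t)$, obtaining $\dot V\ge -aV$ from membership in $K(x,t)$ with $\alpha(s)=as$, and concluding $h(x(t),t)\ge e^{-at}h(x(0),0)\ge 0$ via the integrating factor $e^{at}$ --- is exactly the standard comparison argument used in the CBF literature, and it is sound, modulo the usual implicit convention that $\alpha(s)=as$ is extended to $s<0$ so that the differential inequality is available before one knows the sign of $V$. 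For the Lipschitz half you ultimately invoke the same external proposition that the paper cites, so in substance the two routes coincide there; what you add is the explicit half-space representation $K(x,t)=\{u:\,c(x,t)^{T}u\ge b(x,t)\}$ with $c(x,t)=B^{T}(\partial h/\partial x)^{T}$, and the identification of the genuine technical issue, namely the degenerate set where $c(x,t)=0$. On that point, your worry about $K(x,t)$ being empty is resolved precisely by the CBF hypothesis: when $c(x,t)=0$, condition \eqref{CBF1} forces $\frac{\partial h}{\partial x}Ax+\frac{\partial h}{\partial t}+\alpha(h(x,t))\ge 0$, so $K(x,t)=\mathbb{R}^{m}$ rather than $\emptyset$; what can still fail without a uniform qualification such as $\|c(x,t)\|\ge\delta>0$ is the Hausdorff--Lipschitz estimate itself, since the feasible set can jump between a proper half-space and all of $\mathbb{R}^{m}$. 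That caveat is real, but it applies equally to the paper, which simply inherits whatever hypotheses \cite[Proposition 1]{xu2018constrained} carries.
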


\subsection{Problem formulation}
Consider a MAS with $N$ heterogeneous agents, where each agent has a linear dynamics as
\begin{equation}\label{sys}
\begin{aligned}
\dot{x}_i(t)&=A_ix_i(t)+B_iu_i(t)\\
y_i(t)&=C_ix_i(t),\quad i\in\mathcal{V}
\end{aligned}
\end{equation}
where $x_i(t) \in \mathbb{R}^{n_i}$, $u_i(t) \in \mathbb{R}^{m_i}$, and $y_i(t) \in \mathbb{R}^{p}$ represent the state, control input, and output variables of agent $i$, respectively; and $A_i\in \mathbb{R}^{n_i\times n_i}$, $B_i\in \mathbb{R}^{n_i\times m_i}$, and $C_i\in \mathbb{R}^{p\times n_i}$ are the state, input, and output constant matrices, respectively.

The aim of this paper is to design a distributed controller $u_i(t)$ to achieve the following two objectives simultaneously:
\begin{itemize}
  \item[O1:] \emph{\textbf{(Safety Objective)}} The outputs of all agents belong to a preset safety zone $\Omega\subseteq \mathbb{R}^{p}$, i.e.,  $y_i(t)\in\Omega,~t\geq0$.

  \item[O2:]  \emph{\textbf{(Optimal Objective)}} Simultaneously, all agents' outputs $y_i(t),~i\in\mathcal{V},$ converge to an optimal solution $y^*$ to the following distributed optimization problem:
\begin{equation}
\min_{y\in{\Omega}} f(y)=\sum_{i=1}^{N}f_i(y)
\label{problem1}
\end{equation}
where $y\in\Omega\subseteq\mathbb{R}^p$ is an output variable common to all agents,
$f_{i}: \Omega\! \rightarrow\! \mathbb{R}$ is a local objective function private to agent $i$.
\end{itemize}
\begin{remark}
The \emph{safety objective} always stipulates that the outputs of all agents must persistently reside within a predetermined safety set~\cite{wu2021distributed} in practical, but is often ignored in conventional distributed optimal consensus ~\cite{taaang2018optimal,li2019distributed,yu2021new,tang2020optimal}. In fact, when the optimal solution $y^*$ lies at the boundary of the safety zone $\Omega$, the output $y_i(t)$ is likely to rush out of the safe zone $\Omega$ during the process of approaching the optimal point $y^*$. Thereby, it  may lead to a conflict between the \emph{safety objective} and \emph{optimal objective} (see Example.~\ref{ex1}). Addressing this  conflict is the most challenging yet significant aspect of our work. It makes the problem much complicated in comparison with conventional distributed optimal consensus problem considered in the literature, such as \cite{taaang2018optimal,li2019distributed,yu2021new,tang2020optimal}.
\end{remark}

To achieve both \emph{optimal objective} and \emph{safety objective}, we need the following basic assumptions.
\begin{assumption}\label{as1}
The gradient of each local objective function $\nabla f_i(x_i)$ satisfies $\sigma_i$-strongly monotone with some $\sigma_i>0$ and Lipschitz with some constant $L_{i}>0$.
\end{assumption}

\begin{assumption}\label{as2}
The constraint sets $\Omega$  is convex and compact. Moreover, its interior set $\int(\Omega)$ is nonempty.
\end{assumption}

\begin{assumption}\label{as3}
 The surface $\partial\Omega$ has a bounded normal curvature.
\end{assumption}

\begin{assumption}\label{as4}
The pair $(A, B)$ is controllable and the pair $(C, A)$ is detachable.
\end{assumption}

\begin{assumption}\label{as6}
The transmission zeros of each agent's dynamic \eqref{sys} are not equal to zero, i.e.,
\begin{equation}
\Rank\left[
        \begin{array}{cc}
          A_i& B_i\\
          C_i& \bold{0_{p\times m_i}}\\
        \end{array}\label{klsa}
      \right]=n_i+p.
\end{equation}
\end{assumption}
\vspace{0.4em}

\begin{remark}
Assumptions~\ref{as1} and~\ref{as2} guarantee the existence and unique of the optimal solution, which are commonly used in distributed optimization~\cite{zeng2016distributed,liang2017distributed}. Assumptions~\ref{as4} is a standard assumption in distributed output consensus optimization~\cite{an2021distributed}.
 \end{remark}

\begin{remark}
Assumption~\ref{as3} ensures the smoothness of the surface $\partial\Omega$, which can be satisfied in many manifold structures such as spherical and ellipsoidal surfaces. It is worth noting that even when $\partial\Omega$ consists of a piecewise smooth surface with a countable number of corner points (such as a convex polyhedron), our result still remains applicable.
 \end{remark}
\begin{remark}
Assumption~\ref{as6} is weaker than the minimum phase system condition required for each agent's dynamics in~\cite{taaang2018optimal}. It is also weaker than the condition required in~\cite{li2019distributed,yu2021new}, in which the following equality must be satisfied:
  \begin{eqnarray}\label{aasdg}
    \Rank\left[
        \begin{array}{cc}
          -A_iB_i&~~ B_i\\
          C_iB_i&~\bold{0_{p\times m_i}}\\
        \end{array}
      \right]=n_i+p.
\end{eqnarray}
In fact, satisfying~\eqref{aasdg} inherently satisfies Assumption~\ref{as6}, as demonstrated by the following inequality:
\begin{eqnarray}\label{aasdgg}
  &&\Rank \left[
        \begin{array}{cc}
          -A_iB_i&~~ B_i\\
          C_iB_i&~\bold{0_{p\times m_i}}\\
        \end{array}
      \right]\nonumber\\
     &&= \Rank\left(\left[
        \begin{array}{cc}
          A_i&~~ B_i\\
          -C_i&~\bold{0_{p\times m_i}}\\
        \end{array}
      \right]\left[
        \begin{array}{cc}
          -B_i&~~ \bold{0_{n_i\times m_i}}\\
          \bold{0_{m_i\times p}}&~I_p\\
        \end{array}
      \right]\right)\nonumber\\
   &&\leq\Rank\left[
        \begin{array}{cc}
          A_i&~~ B_i\\
          -C_i&~\bold{0_{p\times m_i}}\\
        \end{array}
      \right]=\Rank\left[
        \begin{array}{cc}
          A_i&~~ B_i\\
          C_i&~\bold{0_{p\times m_i}}\\
        \end{array}
      \right].\nonumber
\end{eqnarray}
\end{remark}

\vspace{0.8em}

According to \cite{huang2004nonlinear}, Assumption~\ref{as6} can be replaced by another condition in the following lemma.
\begin{lemma}\label{lem31}
With Assumption \ref{as4}, the following three conclusions hold.
\begin{enumerate}
    \item  Assumption \ref{as6} is satisfied if and only if the  matrix equation
\begin{eqnarray}\label{ch4ass7}
\bold{0_{n_i\times p}}&=&  A_i\Pi_i+B_i\Psi_i\nonumber\\
\bold{0_{p\times p}}&=& C_i\Pi_i-I_{p},   ~~i=1,...,N.
\end{eqnarray}
admits at least one solution pair $(\Pi_i, \Psi_i)$.
\item   If Assumption~\ref{as6} holds, then the matrix $C_i\in \mathbb{R}^{p\times n_i}$ has full row rank.

  \item  When Assumption \ref{as6} holds and  $C_i = [I_p, \boldsymbol{0_{p\times{(n_i-p)}}}]$, the matrix equation \eqref{ch4ass7} yields a particular solution $(\Pi_i, \Psi_i)$ with $\Pi_i=\left[
        \begin{array}{c}
         I_p\\
         \bold{0_{(n_i-p )\times p }}\\
        \end{array}
      \right]$.
\end{enumerate}
\end{lemma}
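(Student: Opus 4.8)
The plan is to recast the two matrix equations in \eqref{ch4ass7} as a single linear system in the unknown stacked matrix, namely
\begin{equation}
M_i\begin{bmatrix}\Pi_i\\ \Psi_i\end{bmatrix}=\begin{bmatrix}\mathbf{0}_{n_i\times p}\\ I_p\end{bmatrix},\qquad M_i:=\begin{bmatrix}A_i & B_i\\ C_i & \mathbf{0}_{p\times m_i}\end{bmatrix},
\end{equation}
and then to reduce each of the three claims to an elementary rank/range statement about $M_i$. Reading this system column by column, solvability is equivalent to $\begin{bmatrix}\mathbf{0}\\ e_k\end{bmatrix}\in\operatorname{Range}(M_i)$ for every standard basis vector $e_k$ of $\mathbb{R}^p$, i.e. to $\{\mathbf{0}\}\times\mathbb{R}^p\subseteq\operatorname{Range}(M_i)$. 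This reformulation is the common engine for all three parts.

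For Part 1, sufficiency is immediate: Assumption \ref{as6} reads $\Rank(M_i)=n_i+p$, so $\operatorname{Range}(M_i)=\mathbb{R}^{n_i+p}$ and the required right-hand side is trivially attained. For necessity I would invoke controllability (Assumption \ref{as4}): the PBH test at $s=0$ gives $\Rank[A_i\ B_i]=n_i$, so $[A_i\ B_i]$ has full row rank and every $v\in\mathbb{R}^{n_i}$ can be written as $A_i\bar x+B_i\bar u$. Given an arbitrary target $(v,w)\in\mathbb{R}^{n_i}\times\mathbb{R}^p$, choose such $(\bar x,\bar u)$, set $w'=C_i\bar x$, and use the hypothesis $\{\mathbf{0}\}\times\mathbb{R}^p\subseteq\operatorname{Range}(M_i)$ to produce $(x',u')$ with $A_ix'+B_iu'=\mathbf{0}$ and $C_ix'=w-w'$; then $(\bar x+x',\bar u+u')$ realizes $(v,w)$, proving $\operatorname{Range}(M_i)=\mathbb{R}^{n_i+p}$, which is exactly Assumption \ref{as6}.

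For Part 2, I would argue by contraposition on the left null space: if $C_i$ were row-rank deficient there is $\xi\neq0$ with $\xi^{T}C_i=\mathbf{0}$, whence $[\,\mathbf{0}\ \xi^{T}]M_i=\mathbf{0}$ exhibits a nontrivial left annihilator of $M_i$, contradicting $\Rank(M_i)=n_i+p$. For Part 3, the candidate $\Pi_i=[I_p;\ \mathbf{0}]$ satisfies $C_i\Pi_i=I_p$ by inspection, so only the existence of a matching $\Psi_i$ with $A_i\Pi_i+B_i\Psi_i=\mathbf{0}$ remains. By Part 1 some solution $(\widetilde\Pi_i,\widetilde\Psi_i)$ exists, and $C_i=[I_p\ \mathbf{0}]$ forces $\widetilde\Pi_i=[I_p;\ \widetilde\Pi_i^{(2)}]$; applying the similarity transformation $T=\begin{bmatrix}I_p & \mathbf{0}\\ -\widetilde\Pi_i^{(2)} & I_{n_i-p}\end{bmatrix}$, which preserves the output form since $C_iT^{-1}=[I_p\ \mathbf{0}]$, carries the solution to $T\widetilde\Pi_i=[I_p;\ \mathbf{0}]$ and thereby yields the stated particular solution.

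I expect the necessity direction of Part 1 to be the crux: the range condition extracted from solvability controls only the output block, and it is precisely controllability that upgrades this partial information to full row rank of $M_i$. The remaining delicate point is Part 3, where one must not merely verify $C_i\Pi_i=I_p$ but also certify a compatible $\Psi_i$; the structure-preserving transformation $T$ above is the natural device, since $A_i[I_p;\mathbf{0}]$ need not by itself lie in the range of $B_i$ in the original coordinates.
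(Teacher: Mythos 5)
Your Parts 1 and 2 are correct, and they are more self-contained than the paper's treatment: the paper disposes of these two claims by citing Theorem 1.9 and Remark 1.11 of Huang's book, whereas your column-by-column range reformulation of \eqref{ch4ass7}, the PBH test at $s=0$ to upgrade solvability to full row rank of $M_i$ (this is where Assumption \ref{as4} is genuinely used), and the left-annihilator argument for Part 2 form a complete elementary proof.

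The genuine gap is in Part 3, and it sits exactly at the point you yourself flagged as delicate. Your transformation gives $T\widetilde\Pi_i=\begin{bmatrix}I_p\\ \mathbf{0}\end{bmatrix}$, but the pair $(T\widetilde\Pi_i,\widetilde\Psi_i)$ satisfies $(TA_iT^{-1})(T\widetilde\Pi_i)+(TB_i)\widetilde\Psi_i=\mathbf{0}$, i.e.\ the regulator equations of the \emph{transformed} system $(TA_iT^{-1},\,TB_i,\,C_iT^{-1})$; it does not satisfy $A_i(T\widetilde\Pi_i)+B_i\widetilde\Psi_i=\mathbf{0}$, which is what \eqref{ch4ass7} demands for the original $(A_i,B_i)$. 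So what you have actually proved is that $\Pi_i$ can be normalized to $\begin{bmatrix}I_p\\ \mathbf{0}\end{bmatrix}$ after a further $C_i$-preserving change of state coordinates---a strictly weaker statement. This gap cannot be patched, because Part 3 as literally written is false. Take $n_i=2$, $m_i=p=1$, $A_i=\begin{bmatrix}1&1\\0&0\end{bmatrix}$, $B_i=\begin{bmatrix}1\\1\end{bmatrix}$, $C_i=[\,1~~0\,]$: then $(A_i,B_i)$ is controllable, $(C_i,A_i)$ is observable, and the matrix in \eqref{klsa} has determinant $1$, so Assumptions \ref{as4} and \ref{as6} hold; yet \eqref{ch4ass7} forces $\Psi_i=0$ and $\Pi_i=\begin{bmatrix}1\\-1\end{bmatrix}$, so no solution with $\Pi_i=\begin{bmatrix}1\\0\end{bmatrix}$ exists. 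The obstruction is precisely the one you named---the first $p$ columns of $A_i$ need not lie in $\operatorname{Range}(B_i)$---and the similarity transformation does not remove it; it only relocates it into new system matrices.

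For context, the paper's own proof of Part 3 (its appendix proof covers only this part) is flawed in a parallel way: it asserts that \eqref{klsa} implies $\Rank(B_i)=n_i$, which is impossible whenever $m_i<n_i$ and fails even for the paper's simulation example ($B_i\in\mathbb{R}^{3\times 2}$). With $C_i=[\,I_p~~\mathbf{0}\,]$, \eqref{klsa} yields only $\Rank[\,A_{2,i}~~B_i\,]=n_i$, where $A_{2,i}$ collects the last $n_i-p$ columns of $A_i$, and this does not place the first $p$ columns of $A_i$ in $\operatorname{Range}(B_i)$. The statement that is actually true---and that your argument establishes---is the ``up to a state coordinate change fixing $C_i=[\,I_p~~\mathbf{0}\,]$'' version; as a proof of the lemma as stated, however, your Part 3 has the gap identified above.
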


\begin{proof}
 The proofs of the first and second conclusions can be directly deduced by Theorem 1.9 and Remark 1.11 in \cite{huang2004nonlinear}, and are thus omitted here. Assume that $C_i = [I_p, \boldsymbol{0_{p\times{(n_i-p)}}}]$,  and  $\Pi_i=\left[
        \begin{array}{c}
         I_p\\
         \bold{0_{(n_i-p )\times p }}\\
        \end{array}
      \right]$. In this case, $ C_i\Pi_i-I_{p}=\bold{0_{p\times p}}.$  It follows from \eqref{klsa} that $\Rank(B_i)=n_i.$
Let $A_i=[A_{1,i},A_{2,i}],$ where $A_{1,i}\in\mathbb{R}^{n_i\times p}$ and $A_{2,i}\in\mathbb{R}^{n_i\times (n_i-p)}$.
 Since $\Rank(B_i)=n_i$, there exists a matrix $ \Psi_i$ such that $-A_{1,i}=B_i\Psi_i$ holds for any $A_{1,i}\in\mathbb{R}^{n_i\times p}$. Consequently,  the equation $A_i\Pi_i+B_i\Psi_i=\bold{0_{n_i\times p}}$ is valid, confirming the third conclusion.
\end{proof}

Since the matrix $C_i\in \mathbb{R}^{p\times n_i}$ has full row rank, without loss of generality, we denote $C_i = [I_p, \boldsymbol{0_{p\times{(n_i-p)}}}]$. Then, we propose the following assumption, which is required in our safety analysis.
\begin{assumption}\label{as42}
The initial state of each agent $i$ satisfies that $x_{ij}(0)=0$ for any $j\in (p,n_i]$, where $x_{ij}(0)$ denotes the $j$-th element of $x_{i}(0)$.
\end{assumption}

\begin{remark}
Assumption~\ref{as42} can easily be satisfied in most distributed applications, such as multi-robot systems and multi-drone systems, in which the initial position of each agent is generally located at a stationary state within the constraint set $\Omega$. In addition, for distributed state consensus optimization (in this case, $x_i=y_i$), we do not need Assumption~\ref{as42} any more.
\end{remark}

 \subsection{Detailed explanations of the challenge}
 Unlike the existing works on first-order integrator MASs \cite{zeng2016distributed,liang2017distributed,li2021exponentially}, we consider the high-order dynamic characteristics of the MASs in  \eqref{sys} and adapt the decision-making and control framework presented in \cite{taaang2018optimal,li2019distributed,yu2021new,tang2020optimal} accordingly.

However, when we combine this framework with the traditional constraint optimization methods,
such as the traditional projection method  \cite{zeng2016distributed,liang2017distributed} or the interior point method \cite{li2021exponentially} to deal with the distributed optimal output consensus control problem with safety constraints, the main challenge lies in the  conflict between the \emph{optimal objective} and the \emph{safety objective}. This conflict is inevitable, especially when the optimal solution $y^*$ is at the boundary of the constraint set $\Omega.$ Therefore, we cannot directly integrate the existing distributed optimization constraint algorithms \cite{zeng2016distributed,liang2017distributed,li2021exponentially} with safety control approaches \cite{yu2019barrier,tee2009barrier,ames2019control,ames2016control} into the decision-making and control framework \cite{zhao2017distributed,li2019distributed,yu2021new}.
To further elucidate this  conflict, we present an example as follows.

\begin{exam}\label{ex1}
Consider a source-searching problem involving two unmanned aerial vehicles (UAVs). The local cost functions for both vehicles are defined as: $f_1(y)=f_2(y)=\frac{1}{2}\|y-y^*\|^2$, where $y^*$ denotes the location of the source. Each UAV is constrained to operate within a safety zone $\Omega$.

We employ the decision-making and control framework \cite{zhao2017distributed,li2019distributed,yu2021new} to address this source-searching problem. Specifically, the decision-making layer utilizes a distributed optimization algorithm constrained by the set
$\Omega$ (e.g., the distributed continuous-time projected algorithms in \cite{zeng2016distributed,liang2017distributed} or distributed optimization algorithm based on interior point method \cite{li2021exponentially}) to generate a reference signal
$s_i(t)$ aimed at identifying the optimal source location
$y^*$. In the control layer, techniques such as CBF or BLF are applied to design a reference-tracking controller
$u_i$ corresponding to the system dynamics described by equation \eqref{sys}. This ensures that the output
$y_i(t)$ not only tracks the reference signal $s_i(t)$
but also remains within a sufficiently small neighborhood of $s_i(t)$.

\begin{figure}
\xdef\xfigwd{\textwidth}
    \centering
\includegraphics[width=0.45\linewidth]{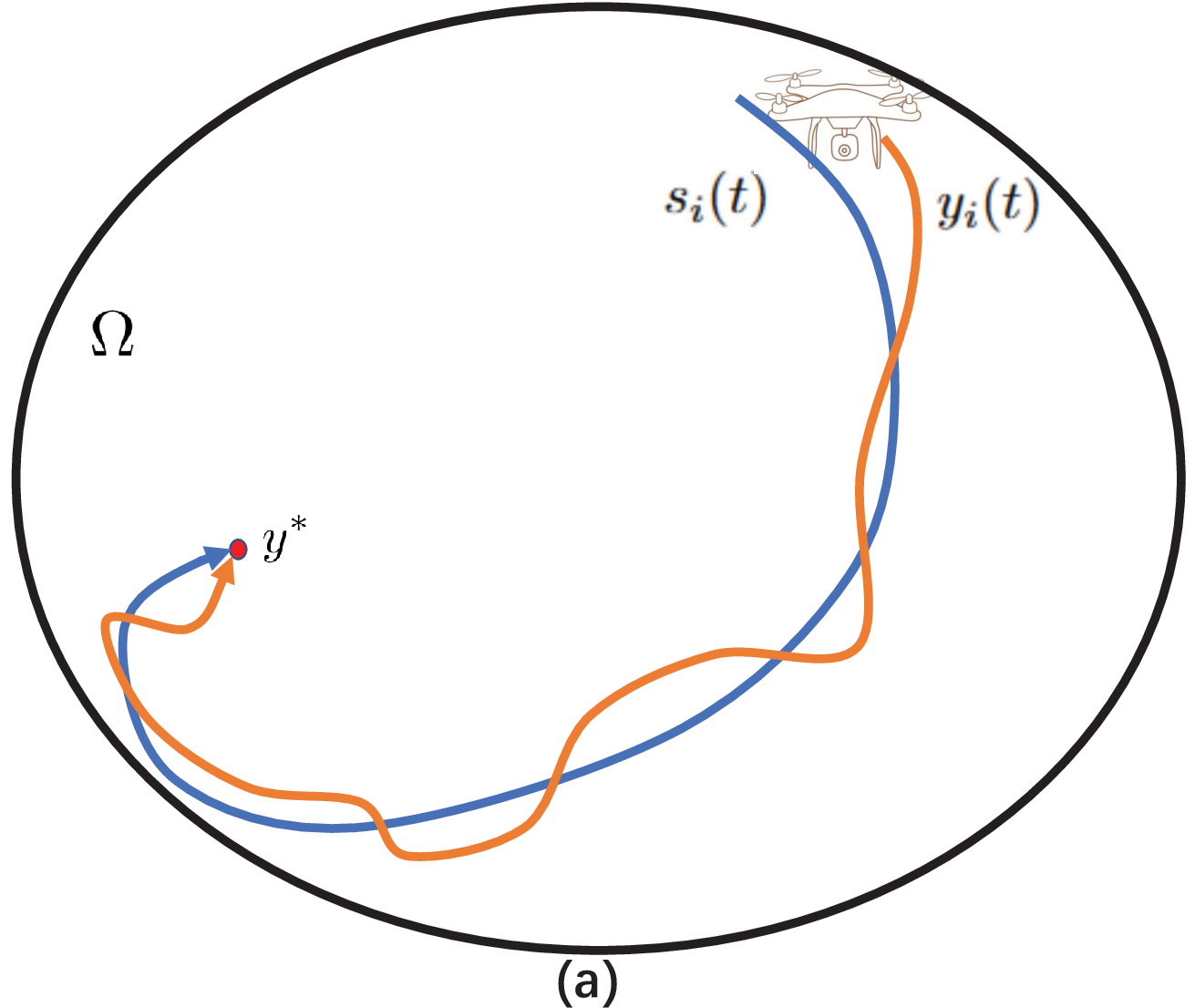}
    \label{1a}\hfill
        \includegraphics[width=0.45\linewidth]{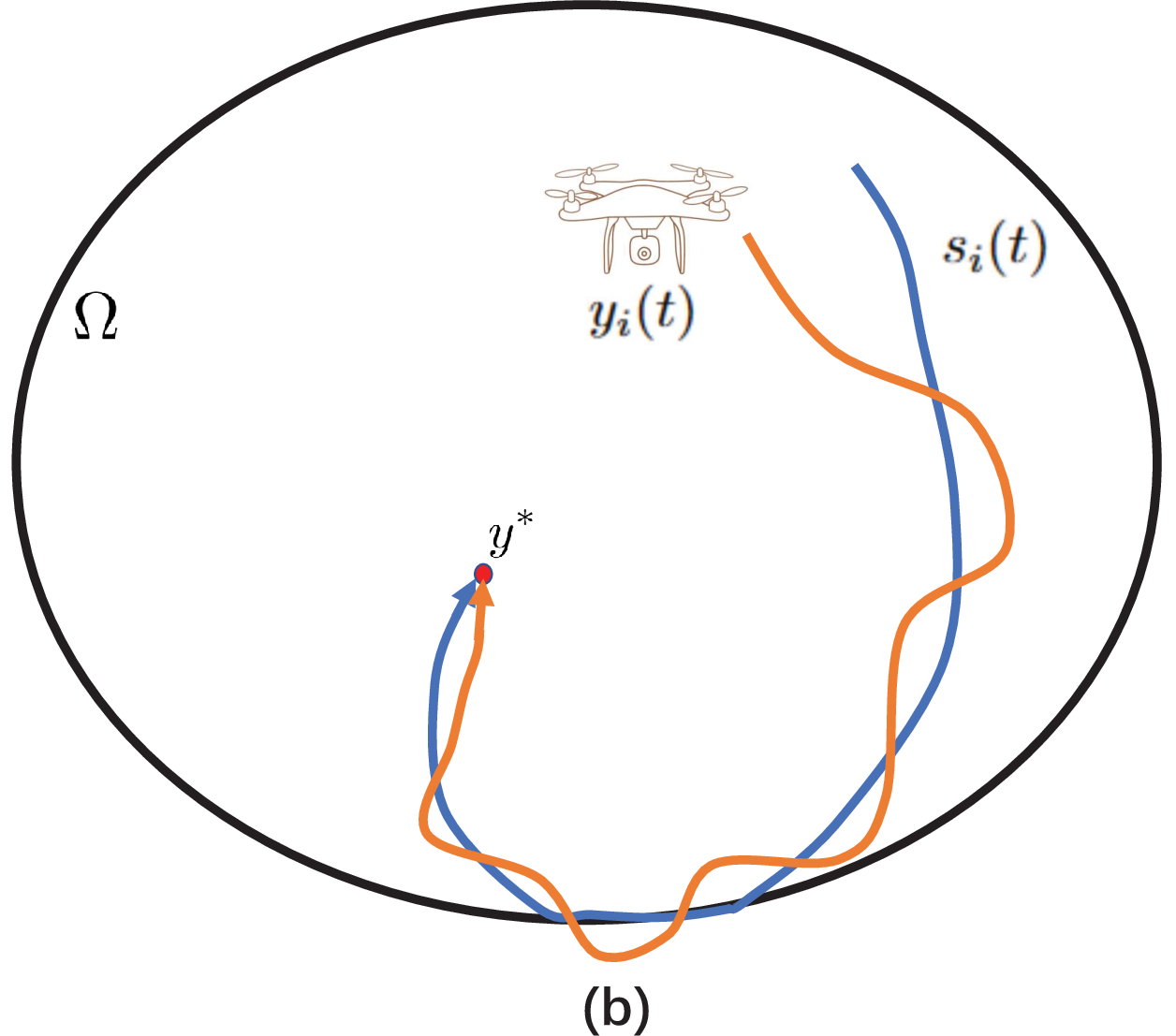}
    \label{1b}\\
        \includegraphics[width=0.45\linewidth]{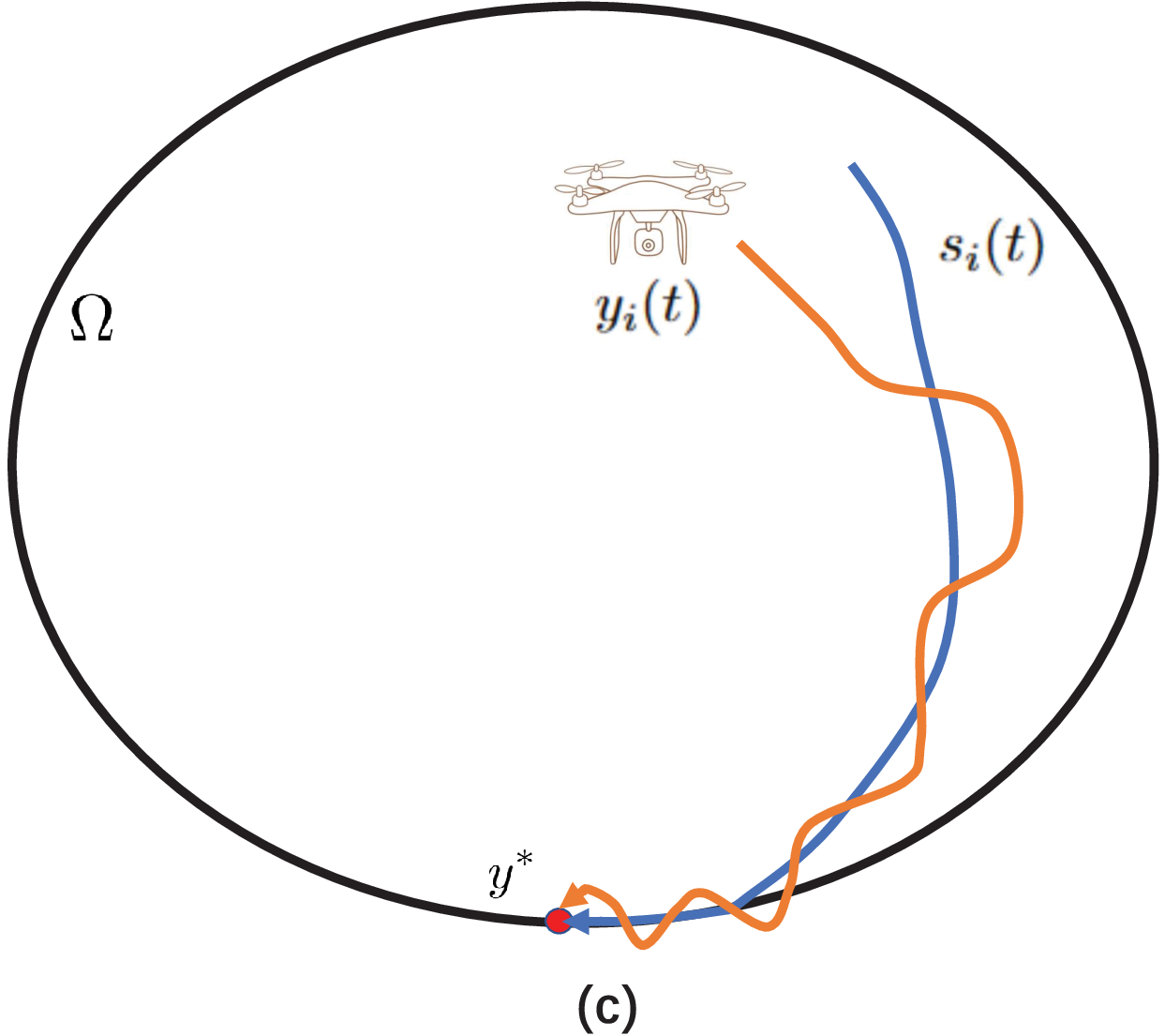}
    \label{1c}\hfill
        \includegraphics[width=0.45\linewidth]{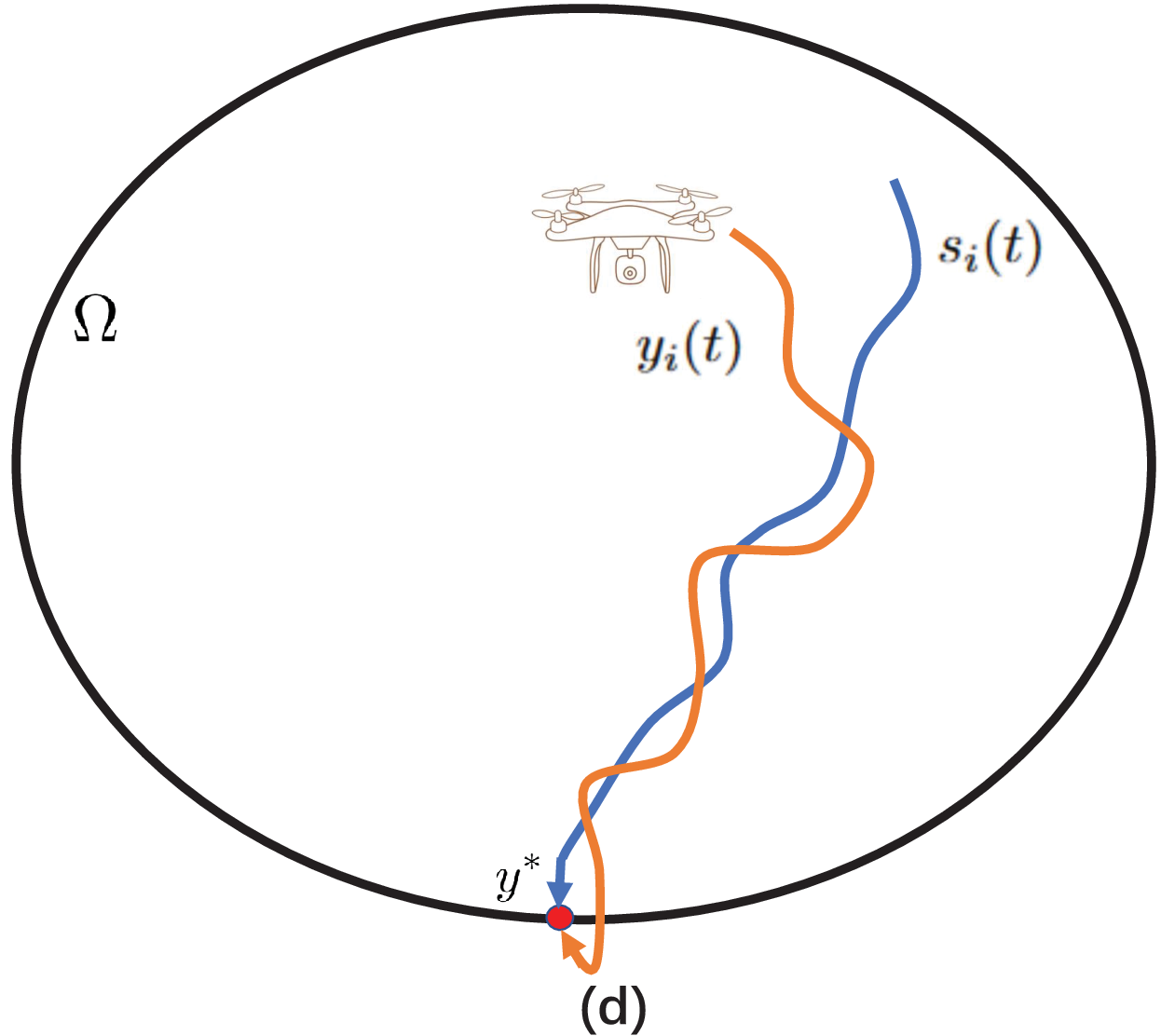}
     \label{1d}
	  \caption{Four possible trajectories of reference signals $s_i(t)$ and outputs $y_i(t)$. The blue line depicts the trajectory of the reference signal $s_i(t)$, the orange line shows the output trajectory $y_i(t)$, and the solid circle represents the constrained safety zone $\Omega$.}
	  \label{fig1}
\end{figure}

In the aforementioned implementation process, only four distinct trajectories for the reference signal $s_i(t)$ and the output $y_i(t)$
 are possible, as illustrated in Fig.\ref{fig1}. Specifically, Fig.~\ref{fig1}(a) shows that when the reference signal trajectory $s_i(t)\in\int(\Omega)$, the trajectory of output $y_{i}$ can also be effectively constrained in $\Omega$, given that $y_{i}(t)$ consistently remains within a sufficiently small neighborhood of $s_{i}(t)$. In this case, when any controller is employed, there is a risk of the output $y_{i}(t)$ being pulled out of the safety zone, as illustrated in Fig.~\ref{fig1}(b)-Fig.~\ref{fig1}(d), implying the invalid the \emph{safety objective}.

\begin{figure}
\xdef\xfigwd{\textwidth}
    \centering
       \includegraphics[width=0.45\linewidth]{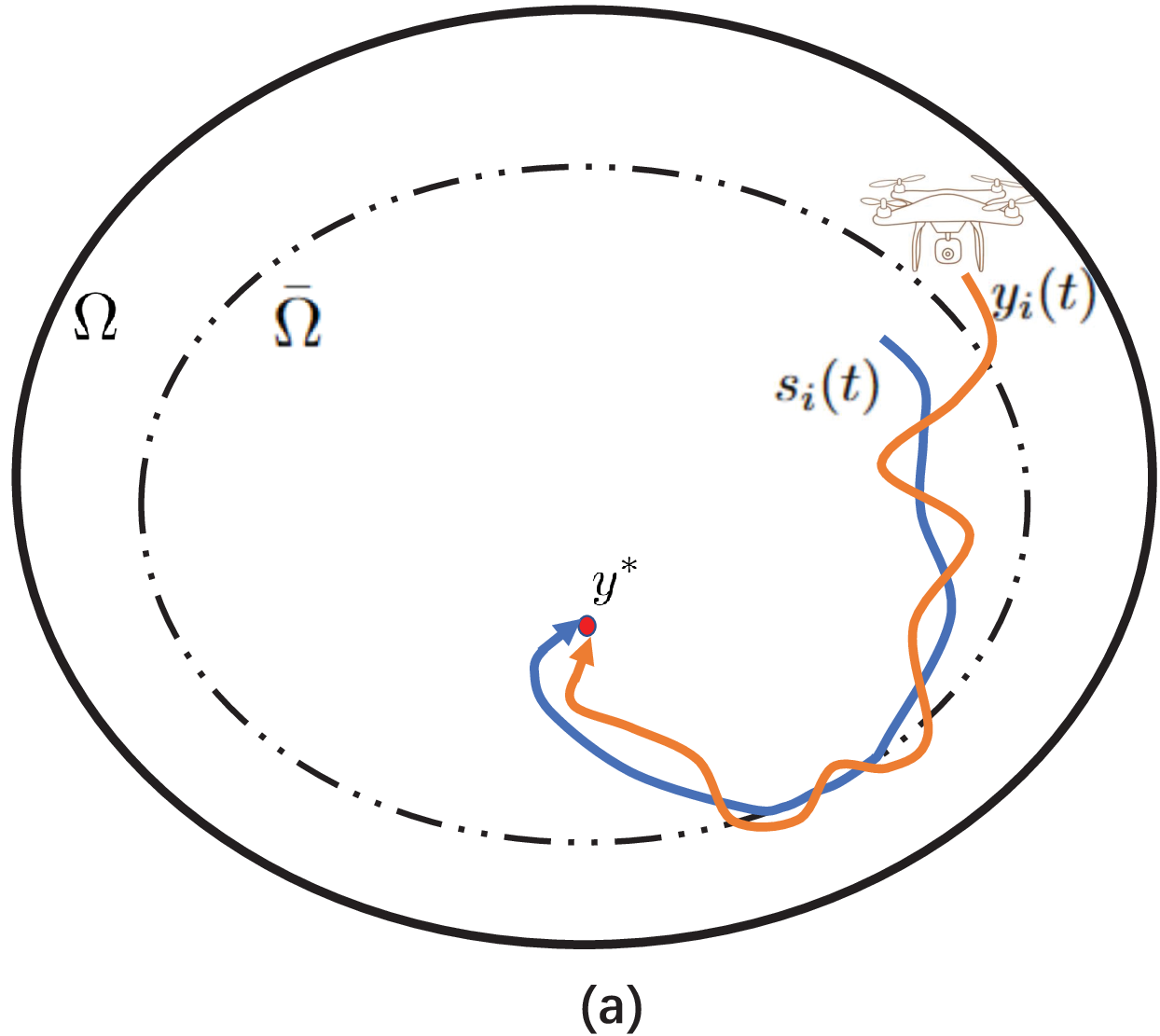}
    \label{1a}\hfill
        \includegraphics[width=0.45\linewidth]{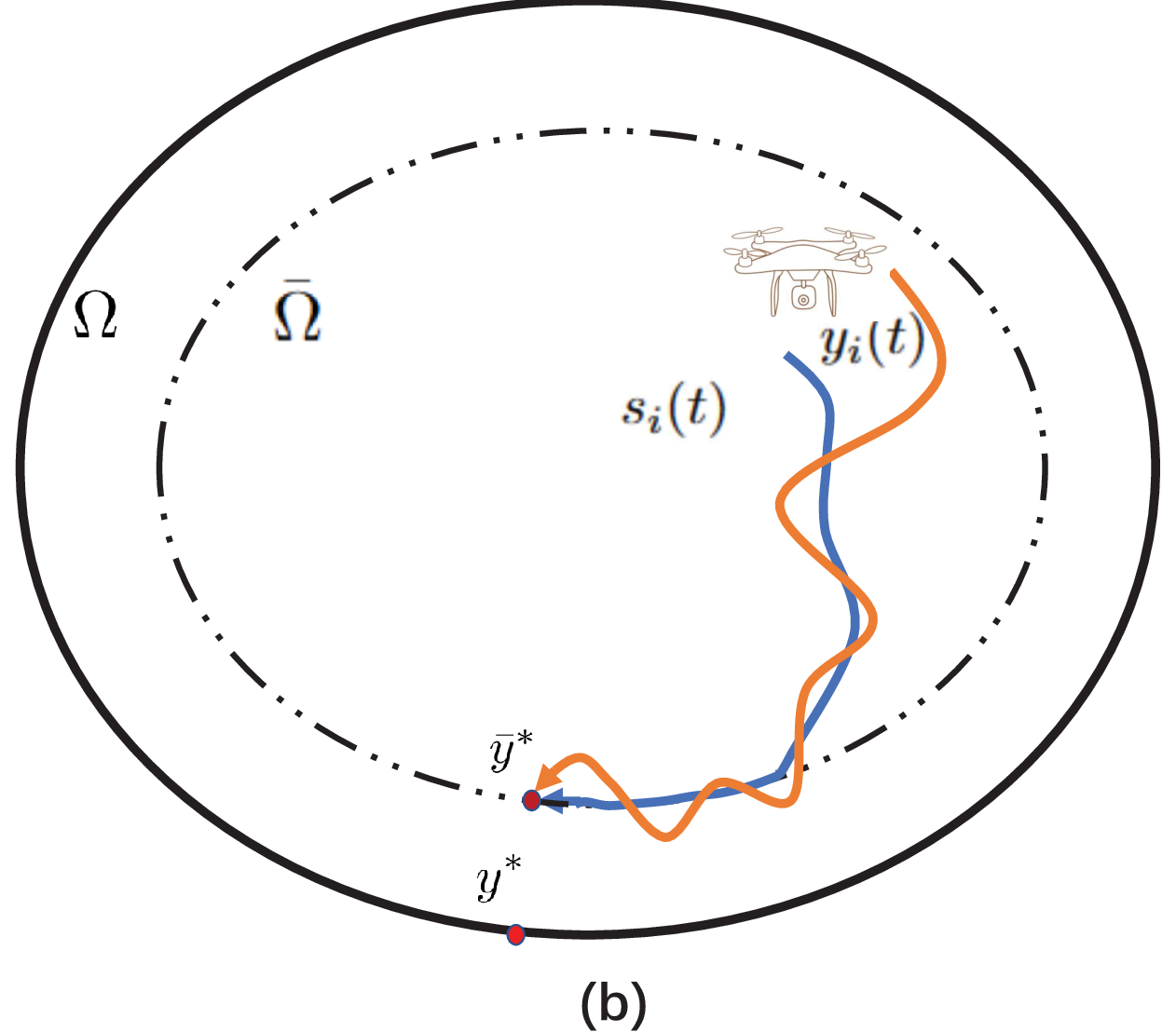}
    \label{1b}
	  \caption{Two possible trajectories of reference signals $s_i(t)$ and outputs $y_i(t)$. The blue line depicts the trajectory of the reference signal $s_i(t)$, the orange line shows the output trajectory $y_i(t)$, the solid circle represents the constrained safety zone $\Omega$, and the dashed circle indicates an auxiliary safety zone $\bar{\Omega}$.}
	  \label{fig12}
\end{figure}

Conversely, prioritizing safety may compromise optimization accuracy. To address this, we propose an approach that employs an auxiliary constraint set $\bar{\Omega}\subseteq \Omega$ to ensure the output $y_{i}$ consistently remains within the original safety zone $\Omega$, as illustrated in Fig.~\ref{fig12}. When source location (i.e., the optimal solution of~\eqref{problem1}) $y^*$ is within the ${\Omega}$, both safety and optimality can be achieved simultaneously, as depicted in Fig.~\ref{fig12}(a). However, when $y^*$ lies on the boundary of $\Omega$, the reference signal $s_{i}(t)$, constrained by $\bar{\Omega}$, converges only to a suboptimal point $\bar{y}^*$. Consequently, the output $y_{i}$ also converges to $\bar{y}^*$, thus sacrificing the \emph{optimal objective}.
\end{exam}

Based on Example \ref{ex1}, integrating the existing distributed optimization constraint algorithms with safety control approaches may lead  to a  conflict  between optimal and safety objectives. It indicates that the traditional projection method \cite{zeng2016distributed,liang2017distributed} or interior point method \cite{li2021exponentially} is unsuitable for our problem. To address this, we propose an adaptive expanding set projection method and design a dynamic decision-making and control protocol in the next section.

\section{Distributed dynamic decision-making and control protocol based on expanding set  projection}\label{design}
For \eqref{sys},  we propose the following distributed dynamic decision-making and control protocol using the projection on an expanding set $\bar\Omega(t)$,  which can both  achieve  \emph{safety} and  \emph{optimal} objectives.
\begin{subequations}\label{24e1}
\begin{numcases}{}
\dot z_i(t)=\frac{k_2}{\beta}\sum_{j=1}^Na_{ij}(z_j(t)-z_i(t))+\frac{d\nabla f_{i}(y_{i}(t))}{dt}\label{24e11}\\
\dot {\eta}_i(t)=\beta k_1\sum_{j=1}^Na_{ij}(\eta_{j}(t)-\eta_{i}(t))\nonumber\\
\quad\quad\quad\quad+\beta\left(s_i(t)-\eta_{i}(t)\right)-\alpha\beta z_i(t)\label{24e122}\\
s_i(t)=P_{\bar{\Omega}(t)}(\eta_i(t))\label{24e133}\\
u_{i}(x_i,s_i)=K_{1i}x_i(t)+K_{2i}s_i(t) \label{24e12}
\end{numcases}
\end{subequations}
where  $y_{i}(t)\in\mathbb{R}^p$ is the output of agent $i$, $z_i(t)\in\mathbb{R}^p$ and $\eta_i(t)\in\mathbb{R}^p$ are the intermediate variables, $s_i(t)\in\mathbb{R}^p$ denotes a decision variable, and $u_{i}(x_i,s_i)$ is the controller.  Note that the initial value of $\eta_{i}(0)\in \int (\Omega)$ is arbitrary, and $z_{i}(0)$ satisfies $z_{i}(0)=\nabla f_{i}(y_{i}(0))$.

The parameters of the distributed optimal control protocol are designed as follows.

\begin{itemize}
    \item The gain matrix $K_{1i}$ is a constant matrix satisfying that  $A_i+BK_{1i}$ is Hurwize, and
the matrix $K_{2i}$ is given by $K_{2i}=\Psi_i-K_{1i}\Pi_i$. Note that the pair $(\Pi_i, \Psi_i)$ is a particular solution to matrix equation \eqref{ch4ass7} satisfying $\Pi_i=\left[
        \begin{array}{c}
         I_p\\
         \bold{0_{(n_i-p )\times p }}\\
        \end{array}
      \right]$, whose existence is validated in Lemma \ref{lem31}.
\item The parameter $\alpha$ can be chosen as any positive constant, and $\beta$ is a positive constant not greater than $1$.

\item The parameters $k_1$ and $k_2$ are sufficiently large such that
\begin{eqnarray}
k_1\lambda_2-3\geq N\beta_2(\alpha)/2
\end{eqnarray}
and
\begin{align}
k_2\lambda_2 - \frac{72\alpha^2L^2_{\max}}{k_2} &\geq \frac{2\beta\alpha^2}{k_1\lambda_2} \nonumber \\
\frac{8L^2_{\max}}{k_2}\left(6\lambda_N^2 k_1^2 + 24 + 18\alpha^2L^2_{\max}\right) &\leq k_1\lambda_2 - 3 \nonumber \\
\frac{8L^2_{\max}N}{k_2}\left(6\lambda_N^2 k_1^2 + 24 + 18\alpha^2L^2_{\max}\right) &\leq \frac{\beta_1(\alpha)}{2}
\end{align}
where
\begin{align}
   \beta_1(\alpha) &:= \min\Big\{\frac{\sigma\alpha}{4} - \alpha^2\Big(L_{\text{max}} + \|\nabla f(y^*(t))\|M_1\Big)^2 \nonumber \\
   &~~~~ -3\alpha^2L_{\text{max}}^2,~\frac{1}{6} - 2\sigma\alpha, \frac{1}{2}(\sigma_1 - 2\alpha^2L_{\text{max}}^2), \frac{1}{4}\Big\} \nonumber \\
   \beta_2(\alpha) &:= \max\Big\{\frac{6\alpha L^2_{\text{max}} + 3}{N\sigma} + \frac{2 + 4\alpha^2L_{\text{max}}^2}{N}, \nonumber \\
   &~~~~~~~~~~~~~~~~~~~~~~~~~~~~~~~~\frac{6}{\alpha^2L^2_{\text{max}}N^2} + \frac{12}{N^2}\Big\} \nonumber \\
    \beta_3(\alpha) &:= \max\Big\{\frac{6\alpha L^2_{\text{max}}}{\sigma N} + \frac{4\alpha^2L^2_{\text{max}}}{N},\frac{12}{N^2}\Big\} \nonumber \\
    \beta_4(\alpha) &:= \max\Big\{\frac{3}{\sigma\alpha} + 6, \frac{6}{\alpha^2L^2_{\text{max}}N} + 4\Big\}
\end{align}
 $L_{max}:=\max_{i\in\mathcal{V}}L_i$, $\sigma:=\sum_{i=1}^N\sigma_i/N,$ $\sigma_1:=\min\{\sigma,\frac{1}{16}\},$ and $M_1$ is an upper bound for the  normal curvature of $\partial \bar\Omega(t).$

\end{itemize}

The constraint set $\bar\Omega(t)$ is an expanding convex set and is required to satisfy the following conditions:
\begin{itemize}

\item[\textbf{C1:}]\label{c1} $\bar{\Omega}(t)\subseteq \int(\Omega)$,~ $\bar{\Omega}(t_1)\subseteq\bar{\Omega}(t_2)$, $\forall~t_1\leq t_2;$

\item[\textbf{C2:}] $\lim_{t\rightarrow\infty}\bar{\Omega}(t)=\Omega$;

\item[\textbf{C3:}] The surface $\partial\bar\Omega(t)$ has a bounded normal curvature.

\end{itemize}

\begin{remark}
 Note that C1 presents the expanding property of $\bar{\Omega}(t)$ and ~C2 illustrates that this set $\bar{\Omega}(t)$ approximates the actual safety zone $\Omega$ when time tends to infinity. In addition, Assumption~\ref{as2} ensures  $\Ri\big(\Omega\big)\neq \emptyset$, which further ensures the existence of  $\bar{\Omega}(t)$ satisfying C1 and C2. From Assumption~\ref{as3},  there must exist an $\bar{\Omega}(t)$ such that $\partial\bar\Omega(t)$ consistently exhibits bounded normal curvature, i.e., C3 holds.
\end{remark}

\begin{remark}
  In dynamic~\eqref{24e11}, agent $i$ only uses the gradient information $\nabla f_{i}(y_{i})$, which can be measured in practical distributed optimization applications. For example, in the source-seeking problem of multi-robot systems, each robot can only measure the gradient value of an unknown
signal field. In this case, the gradient information $\nabla f_{i}(s_{i})$ is available.
However, our strategy will lead to an error between $\nabla f_{i}(y_{i})$ and $\nabla f_{i}(s_{i})$, which further compromises the tracking accuracy of $s_{i}(t)$ to $y^*(t)$.
This issue is  often faced by existing distributed output optimization algorithms that are sensitive to perturbations (see, e.g.,~\cite{zeng2016distributed,liang2017distributed}).
Hence, in the design of the dynamic decision-making and control protocol \eqref{24e1}, the decision-making algorithm \eqref{24e11}-\eqref{24e133} is developed by a new distributed optimization algorithm \eqref{2ss4e1}, which exhibits robustness to external disturbances.
In our theoretical convergence analysis (Sec.~\ref{Convergenceanalysis}), we solve this issue and prove that $s_{i}(t)$ exactly converges to the time-varying optimal solution $y^*(t)$ to the following time-varying optimization problem:
\begin{equation}
\min_{y \in \bar{\Omega}(t)} f(y) = \sum_{i=1}^{N} f_i(y).
\label{problem3}
\end{equation}

\end{remark}

The  dynamic decision-making and control protocol is designed based on an adaptive expanding set projection method. To illustrate this method, we construct a \emph{shrinking safety set}  $\mathcal{S}_{s_i,\bar\Omega}(t)$ corresponding to the \emph{expanding constraint set} $\bar\Omega(t)$. The \emph{shrinking safety set} $\mathcal{S}_{s_i,\bar\Omega}(t)$ is a $p$-dimension open ball $\mathcal{B}(s_i(t), r(t))$ with $s_i(t)$ as the center and $r(t)$ as the radius, where its radius satisfies
\begin{equation}\label{uhas}
r(t)=\min\big\{r\geq0 \big|x+y\in\Omega,~\forall x\in\bar\Omega(t),~y\in \mathcal{B}(0,r)\big\}.
\end{equation}

\begin{remark}\label{r3}
Since $\bar\Omega(t)$ is expending over  time, \eqref{uhas} implies $\lim_{t\rightarrow\infty}r(t)=0$, which further ensures the contracting property of $\mathcal{S}_{s_i,\bar\Omega}(t)$. Moreover, \eqref{uhas} indicates that $\mathcal{S}_{s_i,\bar\Omega}(t)$ always remains within the original safety zone $\Omega$.
\end{remark}

Based on the design of the \emph{expanding constraint set} $\bar\Omega(t)$ and the corresponding \emph{shrinking safety set}  $\mathcal{S}_{s_i,\bar\Omega}(t)$, our protocol uses an adaptive expanding set projection, as shown in Fig~\ref{figg1a}.
This method can effectively separate the designs of distributed decision-making layer and control layer as follows.
\begin{itemize}
    \item[(i)] In the decision-making layer, design a  distributed projection optimization algorithm \eqref{24e11}-\eqref{24e133} with the \emph{expanding constraint set} $\bar{\Omega}(t)$ to solve the distributed optimization problem \eqref{problem3} .

    \item[(ii)] In the control layer, design a safety reference controller \eqref{24e12} so that the output $y_i(t)$ can track the signal $s_i(t)$. Choose a suitable \emph{expanding constraint set} $\bar\Omega(t)$ with a sufficiently slow expansion speed such that \eqref{disf} and  \eqref{aasfdds} hold. Correspondingly, the \emph{shrinking safety set} $\mathcal{S}_{s_i, \bar{\Omega}}(t)$ has slow enough shrink speed such that the output $y_i$ remains within $\mathcal{S}_{s_i, \bar{\Omega}}(t)$.
   \end{itemize}

According to the second point in the extended set projection method, the expansion speed of \emph{expanding constraint set} is adaptively determined by the trajectory of the output $y_i$, and is also determined by the closed-loop system composed of the MASs~\eqref{sys} and the distributed protocol \eqref{24e1}.
This expansion speed is critical for achieving both the \emph{optimal objective} and the \emph{safety objective}, particularly when the optimal point $y^*$ is on the boundary of the
safety zone $\Omega$. The decision signal
$s_i(t)$ is constrained within the \emph{expanding constraint set} $\bar\Omega(t)$, where the expansion rate determines the speed at which the signal $s_i(t)$ approaches the set boundary $\partial\Omega.$ It can prevent $s_i(t)$ from approaching the constraint boundary ``too quickly'', thereby it prevents the agent's output $y_i(t)$ from exiting the safety zone $\Omega$, as depicted in Fig.\ref{fig1}(c)-Fig.\ref{fig1}(d). Consequently, the proposed distributed dynamic decision-making and control protocols \eqref{24e1} can achieve the \emph{optimal objective} without violating the \emph{safety objective}.

\begin{figure}[h]
\centering
\includegraphics[width=0.3\textwidth]{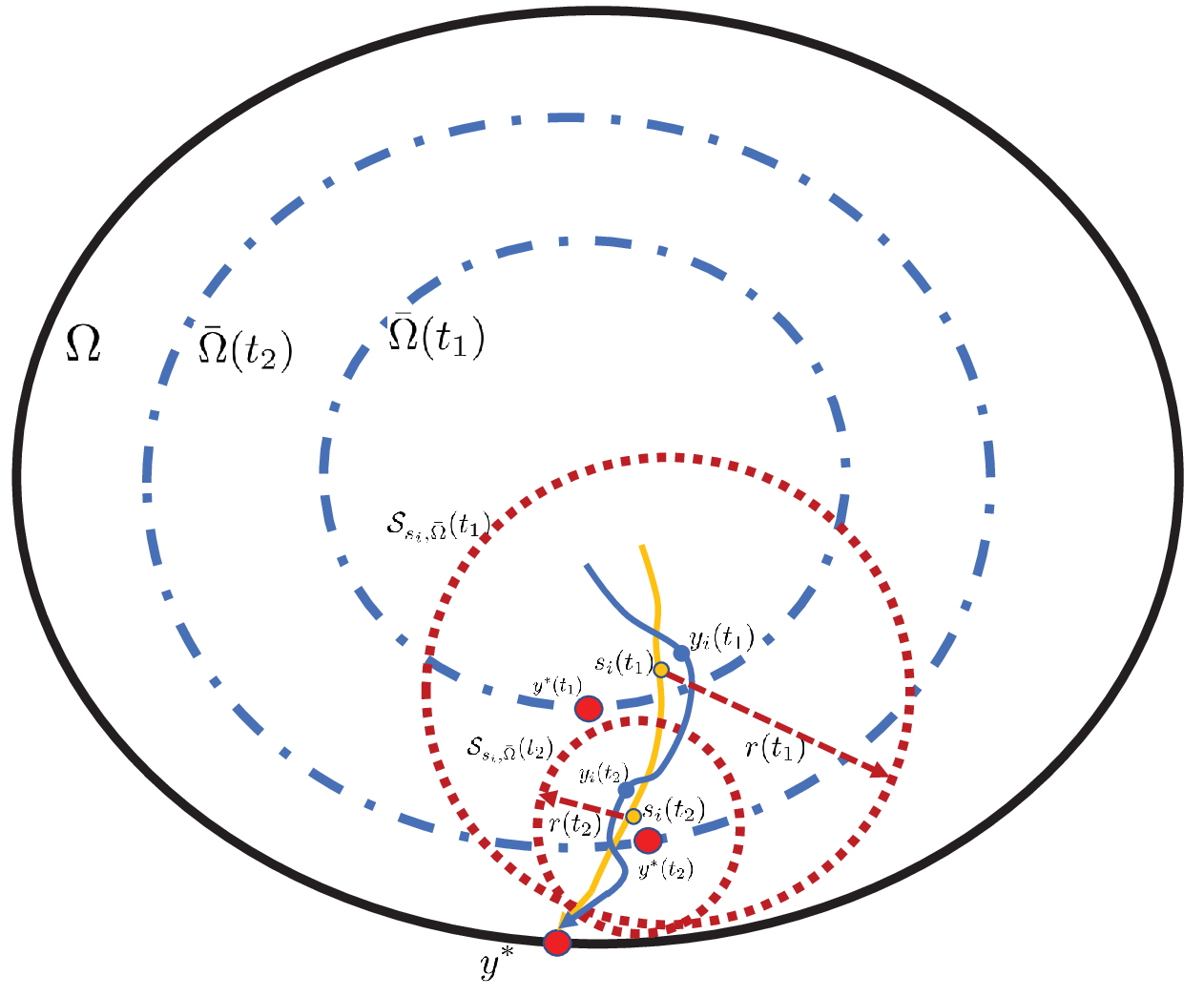}
\caption{Adaptive expanding set projection method. The blue line depicts the trajectory of the reference signal $s_i(t)$, the orange line shows the output trajectory $y_i(t)$, the blue dashed  circles represent the constraint sets $\bar\Omega(t_1)$ and $\bar\Omega(t_2)$ with $t_2>t_1\geq0$, and the red dashed  circles represent the safety sets $\mathcal{S}_{s_i,\bar\Omega}(t_1)$ and $\mathcal{S}_{s_i,\bar\Omega}(t_2)$.}\label{figg1a}
\end{figure}

\section{Convergence Analysis via Small-gain Theorem}\label{Convergenceanalysis}
In this section, we analyze the convergence for the closed-loop system composed of the MASs \eqref{sys} and the  protocol \eqref{24e1}
by small-gain theorem.

First, we need the following lemma to establish the equivalence between the equilibrium point of~\eqref{24e1} and the optimal solution $y^*$ to problem~\eqref{problem1}.
\begin{lemma}\label{lem4e1}
Let $s(\infty)$ denote the equilibrium point of \eqref{24e1}, $y^*$ denote the optimal solution to problem~\eqref{problem1}.
When the relations $\lim_{t\rightarrow\infty}\eta_{i}(t)=\lim_{t\rightarrow\infty}\eta_{j}(t)$, $\lim_{t\rightarrow\infty}z_{i}(t)=\lim_{t\rightarrow\infty}z_{j}(t)$, and $\lim_{t\rightarrow\infty}y_{i}(t)=s(\infty)$ hold, we have that the equilibrium point $s(\infty)$ of \eqref{24e1} equals to the optimal solution $y^*$ to problem~\eqref{problem1}.
\end{lemma}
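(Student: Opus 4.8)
The plan is to exploit a conservation law hidden in the $z$-dynamics to pin down the limiting value of $z_i$, then to read off the equilibrium conditions of the $\eta$- and $s$-equations, and finally to recognize the resulting relation as the fixed-point characterization of the projected-gradient optimality condition for \eqref{problem1}. Throughout I write $\eta^{*}:=\lim_{t\to\infty}\eta_i(t)$ and $z^{*}:=\lim_{t\to\infty}z_i(t)$, which are common to all $i$ by hypothesis, together with $s^{*}:=s(\infty)=\lim_{t\to\infty}y_i(t)$. By condition \textbf{C2} ($\bar\Omega(t)\to\Omega$) and continuity of the projection in both its argument and its set, the decision variable satisfies $s_i(\infty)=\lim_{t\to\infty}P_{\bar\Omega(t)}(\eta_i(t))=P_{\Omega}(\eta^{*})$, so $s^{*}=P_{\Omega}(\eta^{*})$.

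First I would establish the conservation law $\sum_{i=1}^{N} z_i(t)=\sum_{i=1}^{N}\nabla f_i(y_i(t))$ for all $t\ge 0$. Define $g(t):=\sum_{i} z_i(t)-\sum_{i}\nabla f_i(y_i(t))$; differentiating and substituting \eqref{24e11} gives
\begin{equation}
\dot g(t)=\frac{k_2}{\beta}\sum_{i=1}^{N}\sum_{j=1}^{N} a_{ij}\big(z_j(t)-z_i(t)\big)=0,
\end{equation}
where the double sum vanishes because $\mathcal{G}$ is undirected ($a_{ij}=a_{ji}$), i.e. $\mathbf{1}_{N}^{\T}\mathcal{L}_{\mathcal{G}}=0$. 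Since the protocol is initialized with $z_i(0)=\nabla f_i(y_i(0))$, we have $g(0)=0$, hence $g(t)\equiv 0$. Passing to the limit and using $z_i(\infty)=z^{*}$ together with $y_i(\infty)=s^{*}$ yields $N z^{*}=\sum_{i}\nabla f_i(s^{*})=\nabla f(s^{*})$, that is $z^{*}=\tfrac{1}{N}\nabla f(s^{*})$.

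Next I would evaluate the equilibrium of \eqref{24e122}. Setting $\dot\eta_i=0$ and using that the consensus term $\beta k_1\sum_{j} a_{ij}(\eta_j(\infty)-\eta_i(\infty))$ vanishes because all $\eta_i(\infty)=\eta^{*}$, I obtain $0=\beta(s^{*}-\eta^{*})-\alpha\beta z^{*}$, i.e. $s^{*}=\eta^{*}+\alpha z^{*}$. Combining with $z^{*}=\tfrac{1}{N}\nabla f(s^{*})$ gives $\eta^{*}=s^{*}-\tfrac{\alpha}{N}\nabla f(s^{*})$, and substituting this into $s^{*}=P_{\Omega}(\eta^{*})$ produces the fixed-point equation
\begin{equation}
s^{*}=P_{\Omega}\!\Big(s^{*}-\tfrac{\alpha}{N}\nabla f(s^{*})\Big).
\end{equation}
Because $\alpha/N>0$, this is precisely the first-order (variational-inequality) optimality condition for the convex program \eqref{problem1}; by Assumptions \ref{as1} and \ref{as2} its solution is unique, so $s^{*}=y^{*}$, as claimed.

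The main obstacle I anticipate is the passage $s_i(\infty)=P_{\bar\Omega(t)}(\eta_i(t))\to P_{\Omega}(\eta^{*})$, since it couples a \emph{moving} constraint set with a converging argument. To make it rigorous I would invoke the Hausdorff convergence $\bar\Omega(t)\to\Omega$ guaranteed by \textbf{C2}, combined with the non-expansiveness of the Euclidean projection, to bound $\|P_{\bar\Omega(t)}(\eta_i(t))-P_{\Omega}(\eta^{*})\|$ by the sum of a set-convergence term and $\|\eta_i(t)-\eta^{*}\|$, both of which vanish as $t\to\infty$. The remaining steps are purely algebraic once the conservation law is secured; the only other point requiring care is the identification of the projection fixed point with the optimality condition of \eqref{problem1} and the appeal to strong monotonicity (Assumption \ref{as1}) and convex compactness (Assumption \ref{as2}) to conclude uniqueness of $y^{*}$.
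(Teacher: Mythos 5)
Your proof is correct and follows essentially the same route as the paper's: establish the conservation law $\sum_{i} z_i(t)=\sum_{i}\nabla f_i(y_i(t))$ from the initialization $z_i(0)=\nabla f_i(y_i(0))$ and the Laplacian's null-space property, read off the equilibrium relation $s(\infty)-\alpha z(\infty)=\eta(\infty)$ from \eqref{24e122}, and conclude via the projection fixed-point equation, which is the optimality condition for \eqref{problem1}. The only differences are cosmetic refinements on your part --- you carry the $1/N$ scaling explicitly (the paper's notation absorbs it, which is harmless since any positive step size yields the same fixed-point characterization) and you rigorously justify the moving-set limit $P_{\bar\Omega(t)}(\eta_i(t))\to P_{\Omega}(\eta(\infty))$ via condition C2 and non-expansiveness, a step the paper's proof takes for granted.
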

\begin{proof}
 See Appendix \ref{2ap3}.
\end{proof}

For convenience, denote $D_{max}:=\max_{i\in\mathcal{V}}\|P_i\|\|\Pi_i\|^2$, $C_{max}:=\max_{i\in\mathcal{V}}\|C_i\|$ and $\lambda_{min}(P_i)$ denotes the smallest eigenvalue of the matrix $P_i$, and the positive definite matrix  $P_i$ is  a solution of the following Lyapunov equation:
\begin{eqnarray}\label{ria12}
(A_i+B_iK_{1i})^TP_i+P_i(A_i+B_iK_{1i})+cP_i=0
\end{eqnarray}
where
$c<-2\max_{i\in\mathcal{V}}\max_{1\leq j\leq n_i}\Re\{\lambda_j(A_i+B_iK_{1i})\}
$,~
~$
\gamma_{\theta}'=\max\Bigg\{4\gamma^{\dot{e}}_{\eta}(\lambda_N k_1+2+L_{max}\alpha)
\cdot\max_{i\in\mathcal{V}}\|C_i\Pi_i\| ,~ 2\gamma^{\dot{e}}_{\eta}\alpha\cdot\max_{i\in\mathcal{V}}\|C_i\Pi_i\|\Bigg\}$,~
$\gamma_{\eta}^{\eta^*} = N\beta_4(\alpha)/2 $, $
\gamma_{\eta} = \min\left\{\frac{\alpha^2}{k_1\lambda_2}, \frac{1}{2}(k_1\lambda_2 - 3), \frac{\beta_1(\alpha)}{2}\right\},$ $\gamma_{\eta}^{e} = \frac{N\beta_3(\alpha)}{2} + \frac{36\alpha^2L^4_{\max}}{k_2} $
$\gamma_{\eta}^{\dot{e}}= \frac{4L^2_{\max}}{k_2}$,
$\gamma_{x} =\frac{18D_{max}\beta^2\alpha^2L^2_{max}C^4_{max}}{c\lambda_{min}^2(P_i)},$
 $\gamma^{\widehat\eta}_{x}=\frac{2D_{max}}{c}(3\lambda_N^2 k_1^2+12+9\alpha^2L^2_{max}),$
$\gamma^{\widetilde z}_{x}=\frac{18D_{max}\alpha^2}{c}$, $
 \gamma_{\theta}^{\widetilde x}=\gamma^{e}_{\eta}\cdot\max_{i\in\mathcal{V}}\|C_i\|+\gamma^{\dot{e}}_{\eta}\cdot\max_{i\in\mathcal{V}}\|C_i(A_i+B_iK_{1i})\|\nonumber\\
+\gamma^{\dot{e}}_{\eta}\cdot\max_{i\in\mathcal{V}}\|C_i\Pi_i\|\cdot\max_{i\in\mathcal{V}}\|C_i\|\alpha L_{max}$.

It is time to present the main result of this section.

\begin{theorem}\label{the31}
Under Assumptions \ref{as1}-\ref{as6}, and given the MASs~\eqref{sys} with the distributed dynamic decision-making and control protocol \eqref{24e1},  if $\beta$ satisfies
\begin{eqnarray}\label{smaca}
   0<\beta<\min\left\{1,\frac{\gamma_{\eta}}{2\gamma_{\theta}'}, \sqrt{\frac{c}{4\gamma_x}},\frac{1}{4}\cdot\sqrt{\frac{c\gamma_{\eta}}{\gamma_{\theta}^{\widetilde x}\max(\gamma^{\widetilde z}_{x},2\gamma^{\widehat\eta}_{x})}}\right\}
\end{eqnarray}
then the \emph{optimal objective} is achieved,  i.e.,   $\lim_{t\rightarrow\infty} y_{i}(t)=y^*.$



\end{theorem}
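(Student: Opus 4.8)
The plan is to view the closed loop formed by \eqref{sys} and \eqref{24e1} as a feedback interconnection of a decision-making subsystem and a tracking (control) subsystem, and to invoke the small-gain theorem, with the bound \eqref{smaca} on $\beta$ serving exactly to force the loop gain below one. By Lemma~\ref{lem4e1} the equilibrium of \eqref{24e1} coincides with the optimizer $y^*$ of \eqref{problem1}, so it suffices to prove global attractivity of that equilibrium. Since $s_i=P_{\bar\Omega(t)}(\eta_i)$ and $\bar\Omega(t)\to\Omega$ by C2, the argument will in fact show that $s_i(t)$ tracks the time-varying optimizer $y^*(t)$ of \eqref{problem3} while $y_i(t)$ tracks $s_i(t)$; letting $t\to\infty$ and using $y^*(t)\to y^*$ (a consequence of C2) then yields the claim.

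First I would introduce the tracking error $\widetilde x_i:=x_i-\Pi_i s_i$. Substituting $u_i$ from \eqref{24e12} with $K_{2i}=\Psi_i-K_{1i}\Pi_i$ and using the regulator identities \eqref{ch4ass7}, the state equation collapses to $\dot{\widetilde x}_i=(A_i+B_iK_{1i})\widetilde x_i-\Pi_i\dot s_i$, and because $C_i\Pi_i=I_p$ the output error satisfies $y_i-s_i=C_i\widetilde x_i$. As $A_i+B_iK_{1i}$ is Hurwitz (the stabilizing $K_{1i}$ exists by Assumption~\ref{as4}), the Lyapunov equation \eqref{ria12} supplies $P_i$, and the quadratic $V_x=\sum_i\widetilde x_i^{\T}P_i\widetilde x_i$ yields an ISS estimate of $\widetilde x$ with respect to the input $\dot s_i$; the associated gain is the constant $\gamma_x$, which carries the factor $\beta^2$ precisely because $\dot s_i$ inherits $\beta$ from \eqref{24e122}. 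This is what the bound $\beta<\sqrt{c/(4\gamma_x)}$ controls.

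The decision-making layer is the harder half. Here I would pass to aggregate coordinates measuring the disagreement of $\eta_i$ and of the gradient-tracker $z_i$ from their averages, together with the deviation of $\bar\eta$ from the optimizer $\eta^*(t)$ associated with $y^*(t)$, and show that this subsystem is exponentially ISS with respect to its only input, the gradient mismatch $\nabla f_i(y_i)-\nabla f_i(s_i)$; by Assumption~\ref{as1} this mismatch is bounded by $L_{\max}\|C_i\widetilde x_i\|$, i.e. by the output of the control layer. Assembling the Lyapunov function will require: the strong monotonicity and Lipschitz bounds of Assumption~\ref{as1} to dominate the optimization flow; the connectivity estimate $\lambda_2>0$ with sufficiently large $k_1,k_2$ to contract the disagreement terms; non-expansiveness of $P_{\bar\Omega(t)}(\cdot)$ to transfer bounds from $\eta_i$ to $s_i$; and, most delicately, the bounded-normal-curvature hypotheses (Assumption~\ref{as3} and C3, entering through $M_1$) to control the extra terms generated by projecting onto, and tracking the optimizer of, the moving set $\bar\Omega(t)$. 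The resulting gains from $\widetilde x$ into this layer and onward into $\dot s_i$ are encoded by $\gamma_{\theta}'$ and $\gamma_\eta$, governing the factor $\beta<\gamma_\eta/(2\gamma_{\theta}')$.

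Finally I would close the loop. Combining the two ISS estimates, the composite loop gain is the product of the gain from $\widetilde x$ to $\dot s_i$ and the gain from $\dot s_i$ back to $\widetilde x$, and \eqref{smaca} is exactly the conjunction of the four inequalities ($\beta<1$, the $\eta$-loop bound, the $x$-loop bound, and the cross term $\beta<\tfrac{1}{4}\sqrt{c\gamma_\eta/(\gamma_{\theta}^{\widetilde x}\max(\gamma^{\widetilde z}_{x},2\gamma^{\widehat\eta}_{x}))}$) needed to make this product strictly less than one. The small-gain theorem then gives global exponential convergence of all error coordinates to zero, whence $y_i-s_i\to0$, $s_i-y^*(t)\to0$, and, via Lemma~\ref{lem4e1} and C2, $\lim_{t\to\infty}y_i(t)=y^*$. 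I expect the main obstacle to be the ISS analysis of the decision-making layer: bookkeeping the curvature-dependent terms produced by projecting onto the time-varying set $\bar\Omega(t)$ and verifying that the chosen $k_1,k_2$ make the Laplacian disagreement dynamics contractive uniformly in $t$.
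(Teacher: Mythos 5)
Your proposal follows essentially the same route as the paper's own proof: the identical error coordinates ($\widetilde{x}_i = x_i - \Pi_i s_i$ and the stacked disagreement/optimality vector $\boldsymbol{\theta}$ built from $\widetilde{z}_i$, $\widetilde{\eta}_i$, $\eta'$), the same two-layer ISS estimates \eqref{sma3}--\eqref{sma2}, the same small-gain closure enforced by \eqref{smaca}, and the same final limit argument via C2 and Lemma~\ref{lem4e1}. The only nuance worth noting is that the paper keeps $\dot{y}^*(t)$ as an explicit external input to the $\boldsymbol{\theta}$-subsystem, so the small-gain theorem yields only practical input-to-output stability and convergence to zero follows because C2 forces $\dot{y}^*(t)\to 0$; your outline states this correctly in its opening paragraph, though the claim of ``global exponential convergence'' directly from small-gain in your last paragraph should be softened accordingly.
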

\begin{proof}
Define the following variables:
\begin{equation}
\begin{aligned}
\theta_{i}(t)=\left[\begin{matrix}
\tilde{z}_{i}(t)\\
\tilde{\eta}_{i}(t)\\
\eta'(t)
\end{matrix}\right]=
\left[\begin{matrix}
z_i(t)-\bar{z}(t)\\
\eta_i(t)-\bar\eta(t)\\
\bar\eta(t)-\eta^*(t)
\end{matrix}\right],\quad \widetilde{x}_i(t)={x}_{i}(t)-\Pi_is_i(t)
\end{aligned}
\end{equation}
where $\bar{z}(t)=\frac{1}{N}\sum_{i=1}^{m}z_{i}(t)$, $\bar{\eta}(t)=\frac{1}{N}\sum_{i=1}^N\eta_i(t)$ and $\Pi_i$ is defined the same as in Lemma~\ref{lem31}. The variable
$\eta^*(t)$ is defined as
a solution to the following equations at time $t>0$:
\begin{equation}
\left\{\begin{aligned}\label{ads}
&\eta^*(t)=y^*(t)-\alpha \nabla f(y^*(t))\\
&P_{\bar{\Omega}(t)}(\eta^*(t))=y^*(t)
\end{aligned}
\right.
\end{equation}
where $y^*(t)$ denotes the time-varying optimal solutions to problem~\eqref{problem3}. Moreover, we stack $\theta_{i}$ and $\widetilde{x}_{i}$ for all $i\in \mathcal{V}$ as $\boldsymbol{\theta}(t)=\col({\theta}_1(t),\cdots,{\theta}_N(t))$ and $\boldsymbol{\widetilde{x}}(t)=\col({x}_1(t),\cdots,{x}_N(t))$, respectively.

Next, we establish the ISS for dynamic system described by \eqref{24e11}-\eqref{24e133}, as well as for the MAS \eqref{sys} with the input \eqref{24e12} in the following two inequities  whose proof are given as in Appendices \ref{2ap1} and \ref{2ap2}, respectively.

\begin{eqnarray}\label{sma3}
  \|\boldsymbol\theta(t)\|\!\!\!\!&\leq&\!\!\!\!-\beta_1(\|\boldsymbol\theta(0)\|,t)+2\sqrt{\frac{\gamma_{\theta}^{\widetilde x}}{\gamma_{\eta}}}\|\boldsymbol{\widetilde{x}}(t)\|\nonumber\\
\!\!\!\!&&\!\!\!\!+2\sqrt{\frac{\gamma^{\eta^*}_{\eta}}{\gamma_{\eta}}}(L_{max}+1)\|\dot{y}^*(t)\|
\end{eqnarray}
and
\begin{equation}
\|\boldsymbol{\widetilde{x}}(t)\|\leq-\beta_2(\|\boldsymbol{\widetilde{x}}(0)\|,t)+2\beta\sqrt{\frac{\max(\gamma^{\tilde z}_{x},2\gamma^{\hat\eta}_{x})}{c}}\|\boldsymbol{\theta}(t)\|\label{sma2}
\end{equation}
where the functions $\beta_1(\cdot,\cdot),~\beta_2(\cdot,\cdot):[0,+\infty)\times[0,+\infty)\rightarrow[0,+\infty)$ belong $\mathcal{KL}$-class functions.

According to the small-gain theorem \cite{jiang1994small}, since $ 2\sqrt{\frac{\gamma_{\theta}^{\widetilde x}}{\gamma_{\eta}}}\cdot2\beta\sqrt{\frac{\max(\gamma^{\widetilde z}_{x},2\gamma^{\widehat\eta}_{x})}{c}}<1$, the closed-loop system composed of the MASs \eqref{sys}, and the distributed dynamic decision-making and
control protocol \eqref{24e1}, is practically stable in terms of input-to-output, with $(\boldsymbol\theta(t),\boldsymbol{\widetilde{x}}(t))$ as the output and $\dot{y}^*(t)$ as the input.

By C2 in the design of the time-varying set $\bar{\Omega}(t),$ we have $\lim_{t\rightarrow\infty}\dot{y}^*(t)=0$. This implies   $\lim_{t\rightarrow\infty}\boldsymbol\theta(t)=0$ and $\lim_{t\rightarrow\infty}\boldsymbol{\widetilde{x}}(t)=0.$
Consequently, we have $\lim_{t\rightarrow\infty}s_i(t)-y^*=0$ and $\lim_{t\rightarrow\infty}y_i(t)-s_i(t)=0$. Thus, $\lim_{t\rightarrow\infty}y_i(t)-y^*=0$.
\end{proof}



\begin{corollary}
Under Assumptions \ref{as1}-\ref{as3}, consider the following distributed optimization algorithm:
\begin{align}
\dot {\eta}_i &=\beta k_1\sum_{j=1}^Na_{ij}(\eta_j-\eta_i)+\beta s_i-\beta\eta_i-\beta\alpha z_i\nonumber\\
\dot z_i &= \frac{ k_2}{\beta}\sum_{j=1}^Na_{ij}(z_j-z_i)+\frac{d\nabla f_{i}(s_{i})}{dt}\nonumber\\
s_i&=P_{\Omega}(\eta_i)\label{2ss4e1}
	\end{align}
where $z_i(0) = \nabla f_i(s_i(0))$.
By selecting suitable parameters for $k_1$, $k_2$, $\alpha$ and $\beta$, the trajectory of $s_i(t)$ will converge exponentially to the point $y^*$, which is a solution to the optimization Problem \eqref{problem1}.
\end{corollary}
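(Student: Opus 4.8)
The plan is to recognize \eqref{2ss4e1} as the decision-making subsystem of the full protocol \eqref{24e1} restricted to two degenerate cases, and then to reuse the input-to-state estimate \eqref{sma3} already established in the proof of Theorem \ref{the31}. The algorithm \eqref{2ss4e1} differs from the decision-making part \eqref{24e11}--\eqref{24e133} in exactly two places: the gradient is sampled at the decision variable $s_i$ rather than at the plant output $y_i$, and the fixed set $\Omega$ replaces the expanding set $\bar\Omega(t)$. In the notation of Theorem \ref{the31} the first change amounts to forcing the tracking error $\boldsymbol{\widetilde{x}}(t)\equiv 0$: since $C_i\Pi_i=I_p$ one has $y_i-s_i=C_i\widetilde{x}_i$, so the mismatch $\nabla f_i(y_i)-\nabla f_i(s_i)$ — the only channel through which $\boldsymbol{\widetilde x}$ enters the decision dynamics — disappears when we evaluate the gradient at $s_i$. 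The second change makes the target optimizer time-invariant, so $\eta^*$ defined through \eqref{ads} is constant and $\dot y^*(t)\equiv 0$.

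First I would set up the same error coordinates $\boldsymbol\theta(t)=\col(\theta_1,\dots,\theta_N)$ with $\theta_i=[\widetilde z_i;\widetilde\eta_i;\eta']$ as in the proof of Theorem \ref{the31}, now taking $\eta^*$ to be the constant solution of $\eta^*=y^*-\alpha\nabla f(y^*)$ and $P_{\Omega}(\eta^*)=y^*$. I would record the gradient-tracking identity $\tfrac1N\sum_i z_i(t)=\tfrac1N\sum_i\nabla f_i(s_i(t))$, valid for all $t$: it follows by summing the $z$-dynamics in \eqref{2ss4e1} (the Laplacian contribution cancels by symmetry) and using the initialization $z_i(0)=\nabla f_i(s_i(0))$. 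Together with Lemma \ref{lem4e1} — whose argument applies verbatim since \eqref{2ss4e1} is the static-set specialization of \eqref{24e1} — this certifies that the unique equilibrium is the minimizer $y^*$ of \eqref{problem1}, characterized by the projected-gradient fixed point $y^*=P_\Omega(y^*-\alpha\nabla f(y^*))$; uniqueness is guaranteed by Assumptions \ref{as1}--\ref{as2}.

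Next I would invoke \eqref{sma3}. With $\boldsymbol{\widetilde{x}}(t)\equiv 0$ and $\dot y^*(t)\equiv 0$ its two input terms vanish, leaving the pure $\mathcal{KL}$ decay estimate $\|\boldsymbol\theta(t)\|\le\beta_1(\|\boldsymbol\theta(0)\|,t)$ for the error vector. To upgrade this to an exponential rate I would return to the quadratic Lyapunov function $V$ constructed in Appendix \ref{2ap1}: it is equivalent to $\|\boldsymbol\theta\|^2$ and, once the disturbances are removed, satisfies $\dot V\le -c\,\|\boldsymbol\theta\|^2\le -c'V$ for some $c'>0$, whence $\|\boldsymbol\theta(t)\|\le M e^{-\lambda t}\|\boldsymbol\theta(0)\|$ for suitable $M,\lambda>0$. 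Finally, nonexpansiveness of the projection yields $\|s_i(t)-y^*\|=\|P_\Omega(\eta_i(t))-P_\Omega(\eta^*)\|\le\|\eta_i(t)-\eta^*\|\le\|\widetilde\eta_i(t)\|+\|\eta'(t)\|\le 2\|\boldsymbol\theta(t)\|$, so each $s_i(t)$ converges exponentially to the optimizer $y^*$.

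The main obstacle is not the reduction itself but the verification that the $\mathcal{KL}$ bound is genuinely exponential, which rests entirely on the sign-definiteness of $\dot V$. That estimate is delicate because $\dot V$ must absorb the cross terms generated by $\tfrac{d}{dt}\nabla f_i(s_i)=\nabla^2 f_i(s_i)\dot s_i$ and by the (set-valued) derivative of the projection $s_i=P_\Omega(\eta_i)$ at boundary and corner points of $\partial\Omega$; this is precisely where the strong monotonicity and Lipschitz bounds of Assumption \ref{as1} and the bounded-normal-curvature Assumption \ref{as3} are used, and where the largeness requirements on $k_1,k_2$ together with the smallness of $\beta$ serve to dominate those cross terms. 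Since this computation is already performed in deriving \eqref{sma3}, the corollary follows once the two degeneracies $\boldsymbol{\widetilde{x}}\equiv 0$ and $\dot y^*\equiv 0$ are substituted.
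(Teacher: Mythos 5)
Your proposal is correct and follows essentially the argument the paper intends: the corollary is the specialization of the Theorem~\ref{the31} machinery in which the gradient-mismatch channel ($\boldsymbol{e},\dot{\boldsymbol{e}}$, i.e.\ $\boldsymbol{\widetilde{x}}$) and the drift $\dot{y}^*$ are identically zero, so the Lyapunov inequality \eqref{jkasc} of Appendix~\ref{2ap1} reduces to $\dot{V}_{\boldsymbol\theta}\le-\beta\gamma_{\eta}V_{\boldsymbol\theta}$, giving exponential decay, and nonexpansiveness of $P_\Omega$ transfers this to $\|s_i(t)-y^*\|$. The only stylistic remark is that your detour through the $\mathcal{KL}$ estimate \eqref{sma3} before returning to the quadratic Lyapunov function is redundant — one can read the exponential rate off \eqref{jkasc} directly once the disturbance terms are zeroed — but this does not affect correctness.
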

\begin{remark}
It is noteworthy that, compared to the existing works \cite{zeng2016distributed,liang2017distributed}, the distributed optimization algorithm \eqref{2ss4e1} achieves exponential convergence. It also exhibits robustness to external disturbances, thereby offering a new approach to solving distributed constrained optimization problems that involve external disturbances, communication delays, and so forth.
\end{remark}


\section{Safety Analysis with Time-varying CBF}\label{sany}
In this section, we provide the safety analysis for the closed-loop system composed of the MASs \eqref{sys} and the  protocol \eqref{24e1}.

At first,  the following lemma is required.
\begin{lemma}\label{lem523}
Consider a safety zone for agent $i$'s state of MASs~\eqref{sys}:
\begin{equation}
\mathcal{R}_{s_i}(t)=\Big\{x_i\in \mathbb{R}^{n_i} \big| \|x_i(t)-\Pi_is_i(t)\|_{P_i}\leq \frac{\sqrt{\lambda_{min}(P_i)}r(t)}{\|C_i\|}\Big\}.
\end{equation}
If $x_i(t)\in\mathcal{R}_{s_i}(t)$, then we always have $y_i(t)\in\mathcal{S}_{s_i,\bar{\Omega}}(t)$. Moreover,  if $y_i(0)\in \int(\Omega)$, then $x_i(0)\in\mathcal{R}_{s_i}(0)$.
\end{lemma}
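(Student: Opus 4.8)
The plan is to prove the two assertions separately, in both cases exploiting the block structure of $C_i$ and $\Pi_i$ from Lemma~\ref{lem31}, which yields the single identity $C_i\Pi_i=I_p$ on which everything rests. Since $s_i(t)=P_{\bar\Omega(t)}(\eta_i(t))\in\bar\Omega(t)$ and $r(t)$ is the erosion margin defined in~\eqref{uhas}, the padding property $\bar\Omega(t)+\mathcal{B}(0,r(t))\subseteq\Omega$ already places the ball $\mathcal{S}_{s_i,\bar\Omega}(t)=\mathcal{B}(s_i(t),r(t))$ inside $\Omega$; the lemma is therefore really a statement that converts the $P_i$-weighted ball $\mathcal{R}_{s_i}(t)$ in state space into the Euclidean output ball $\mathcal{S}_{s_i,\bar\Omega}(t)$, and conversely locates the (designed) initial state inside $\mathcal{R}_{s_i}(0)$.

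For the first claim I would use $C_i\Pi_i=I_p$ to write $y_i(t)-s_i(t)=C_ix_i(t)-C_i\Pi_is_i(t)=C_i\big(x_i(t)-\Pi_is_i(t)\big)$, whence $\|y_i(t)-s_i(t)\|\le\|C_i\|\,\|x_i(t)-\Pi_is_i(t)\|$. Converting the Euclidean to the $P_i$-norm through $\|v\|\le\|v\|_{P_i}/\sqrt{\lambda_{min}(P_i)}$ (valid because $P_i\succeq\lambda_{min}(P_i)I$) and then inserting the defining inequality of $\mathcal{R}_{s_i}(t)$, namely $\|x_i(t)-\Pi_is_i(t)\|_{P_i}\le\sqrt{\lambda_{min}(P_i)}\,r(t)/\|C_i\|$, collapses the constants and leaves $\|y_i(t)-s_i(t)\|\le r(t)$, i.e. $y_i(t)\in\mathcal{S}_{s_i,\bar\Omega}(t)$. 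I would flag one boundary subtlety here: the hypothesis uses ``$\le$'' whereas $\mathcal{S}_{s_i,\bar\Omega}(t)$ is an open ball, so one should either read it as the closed ball or appeal to the strict interior margin built into~\eqref{uhas}.

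For the second claim I would invoke Assumption~\ref{as42}: with $C_i=[I_p,\,\mathbf{0}]$, its hypothesis $x_{ij}(0)=0$ for $j>p$ means the first $p$ coordinates of $x_i(0)$ equal $y_i(0)$ and the rest vanish, i.e. $x_i(0)=\Pi_iy_i(0)$. Hence $x_i(0)-\Pi_is_i(0)=\Pi_i\big(y_i(0)-s_i(0)\big)$, and since $\Pi_i$ has orthonormal columns we get $\|x_i(0)-\Pi_is_i(0)\|_{P_i}\le\sqrt{\lambda_{max}(P_i)}\,\|y_i(0)-s_i(0)\|$. It then suffices to make $\|y_i(0)-s_i(0)\|$ small relative to $r(0)$. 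I would obtain this from the initialization freedom: $y_i(0)\in\int(\Omega)$, $\eta_i(0)$ is a free interior point, and $\bar\Omega(0)$ is a design choice with $\bar\Omega(0)\subseteq\int(\Omega)$, so one may take $\bar\Omega(0)\ni y_i(0)$ and $\eta_i(0)=y_i(0)$, forcing $s_i(0)=y_i(0)$ and thus $x_i(0)=\Pi_is_i(0)\in\mathcal{R}_{s_i}(0)$ trivially; more generally, any initialization keeping $s_i(0)$ within $\sqrt{\lambda_{min}(P_i)}\,r(0)/(\|C_i\|\sqrt{\lambda_{max}(P_i)})$ of $y_i(0)$ works.

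I expect the second claim to be the delicate part, since it is the only step that is not a pure norm-comparison: it depends on reading ``$\eta_i(0)$ arbitrary'' as design freedom compatible with the safety requirement rather than an adversarial choice, and on confirming that the erosion radius $r(0)$ in~\eqref{uhas} is strictly positive, which follows from $\bar\Omega(0)\subsetneq\int(\Omega)$ together with the nonempty-interior and compactness parts of Assumption~\ref{as2}. The first claim, by contrast, is a routine computation once $C_i\Pi_i=I_p$ and the norm equivalence are in hand.
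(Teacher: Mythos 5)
Your proposal is correct and follows essentially the same route as the paper's proof: the first claim via $C_i\Pi_i=I_p$ together with $\|v\|\le\|v\|_{P_i}/\sqrt{\lambda_{min}(P_i)}$ and the definition \eqref{uhas} of $r(t)$, and the second claim via Assumption~\ref{as42} combined with the initialization $\eta_i(0)=y_i(0)$, which makes $x_i(0)-\Pi_is_i(0)$ vanish. You are in fact slightly more careful than the paper, which silently identifies $\eta_i(0)$ with $s_i(0)$ (implicitly requiring $y_i(0)\in\bar\Omega(0)$ so that the projection acts as the identity) — a point your initialization argument makes explicit.
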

\begin{proof}
 See Appendix \ref{2ap4}.
\end{proof}

Given that the safety zone $\mathcal{R}_{s_i}(t)$ is a shrinking set, its rate of shrinkage plays a critical role in our safety analysis. Furthermore, as indicated by \eqref{uhas}, the shrinkage rate of $\mathcal{R}_{s_i}(t)$ is determined by the expansion rate of $\bar\Omega(t)$. Therefore, to ensure safety, it is imperative to design the \emph{expanding constraint set} $\bar\Omega(t)$ effectively. This design must meet the specified condition in addition to conditions C1-C3.
\begin{itemize}
\item[\textbf{C4:}] Denote  $\beta_1(t):=\max_{y\in\Omega}\min_{x\in\bar \Omega(t)} d(x,y)$ and
$\beta_2(t):=\min_{y\in\Omega}\min_{x\in \bar\Omega(t)} d(x,y)$. There are a $\xi$ not smaller than $1$, and two positive constants $\beta_1$ and $\beta_2$ such that the following two inequalities hold.

\begin{eqnarray}\label{disf}
\beta_1(t)\leq\xi \beta_2(t).
\end{eqnarray}
and
\begin{equation}
\left\{\begin{aligned}\label{aasfdds}
&\beta_1(t)\geq \beta_1e^{-vt},~\lim_{t\rightarrow\infty}\beta_1(t)=0\\
&\|\dot{y}^*(t)\|\leq \beta_2 e^{-vt}
\end{aligned}
\right.
\end{equation}
where $v$ is a positive constant and will be given in \eqref{defv}.

\end{itemize}

\begin{remark}
It is worth pointing out that \eqref{disf} holds when the \emph{expanding constraint set} $\bar\Omega(t)$ is uniformly expanded in any direction. For example, when $\Omega$ is a sphere,  $\bar\Omega(t)$ can be chosen as an expanding sphere that always has the
same ball center with $\Omega$. In this case, $\xi$ can be chosen as any positive constant not smaller than $1$. On the other hand, when the expansion speed of $\bar\Omega(t)$ is sufficiently slow, \eqref{aasfdds} holds for any positive constant $v$. Based on the above analysis,  C4 can be easily met in practice by choosing a suitable \emph{expanding constraint set} $\bar\Omega(t)$.

\end{remark}

With the above condition, we  construct a time-varying CBF function in the following lemma.

\begin{lemma}\label{smaas}
   Construct  the following function:
\begin{flalign}\label{cbf9}
h(\boldsymbol x,\boldsymbol s,t):=\sum_{i=1}^N h_i(x_i,s_i,t)
\end{flalign}
where $h_i(x_i,s_i,t):=\beta_2(t)-\widetilde{x}^T_{i}(t)P_i\widetilde{x}_{i}(t).$
The function $h(\boldsymbol x,\boldsymbol s,t)$ is a  time-varying CBF,
if $\beta$ satisfies \eqref{smaca} and the following inequality simultaneously.
\begin{flalign}
\beta<\min\Bigg\{&\left(\frac{c^2}{72\beta^2\alpha^2L^2_{max}\kappa_{max}(P)\Pi_{max}^2C^2_{max}}\right)^{1/4}\nonumber \\
&~\sqrt{\frac{cH_1}{8\beta_1\xi M'}} ,~\frac{c}{8\gamma_{\theta}^{\tilde x}},\frac{\gamma_{\theta}}{8\max\{\gamma^{\widehat\eta}_{x},2\gamma^{\boldsymbol{\widetilde z}}_{x}\}},\frac{c}{2\gamma_{\eta}}\Bigg \}
\end{flalign}
and
\begin{equation}\label{defv}
    v=\frac{\beta\gamma_{\eta}}{4}
\end{equation}
where   $H_1=\|\boldsymbol\theta(0)\|^2/2+\frac{4\gamma^{\eta^*}_{\eta}(L_{max}+1)h^2}{\gamma_{\eta}}$ and $M'=\frac{4}{c}P_{max}\Pi_{max}^2\left[(3\lambda_N^2 K_1^2+12+9\alpha^2L^2_{max})+9\alpha^2\right].$
\end{lemma}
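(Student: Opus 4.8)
The plan is to verify the defining inequality \eqref{CBF1} of Definition~\ref{def-cbf} directly, using the closed-loop control \eqref{24e12} as the witness input. Since the left-hand side of \eqref{CBF1} is a supremum over $u$, it suffices to show that the bracket is nonnegative for the single input $u_i=K_{1i}x_i+K_{2i}s_i$; because $h(\boldsymbol x,\boldsymbol s,t)=\sum_i h_i$ is separable, I would treat $\boldsymbol s(t)$ and $\beta_2(t)$ as the explicit time dependence, work agent-wise, and take the class-$\mathcal{K}$ function $\alpha(h)=ch$ with $c$ the constant of the Lyapunov equation \eqref{ria12}. With this reading the CBF bracket for $h$ coincides with the total derivative $\dot h+\alpha(h)$ along the closed-loop flow, so the whole task reduces to a differential inequality.

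First I would compute the tracking-error dynamics. Inserting $K_{2i}=\Psi_i-K_{1i}\Pi_i$ and using the regulator identity $B_i\Psi_i=-A_i\Pi_i$ from \eqref{ch4ass7} gives $A_ix_i+B_iu_i=(A_i+B_iK_{1i})\widetilde x_i$, hence $\dot{\widetilde x}_i=(A_i+B_iK_{1i})\widetilde x_i-\Pi_i\dot s_i$. Differentiating $h_i=\beta_2(t)-\widetilde x_i^TP_i\widetilde x_i$ and substituting $(A_i+B_iK_{1i})^TP_i+P_i(A_i+B_iK_{1i})=-cP_i$ from \eqref{ria12} yields $\dot h_i=\dot\beta_2(t)+c\,\widetilde x_i^TP_i\widetilde x_i+2\widetilde x_i^TP_i\Pi_i\dot s_i$. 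Summing and using $\sum_i\widetilde x_i^TP_i\widetilde x_i=N\beta_2(t)-h$, the quadratic term cancels against $\alpha(h)=ch$, so the CBF property is equivalent to
\begin{equation}\label{eq:planstar}
cN\beta_2(t)+N\dot\beta_2(t)+2\sum_{i=1}^N\widetilde x_i^TP_i\Pi_i\dot s_i(t)\ge0,\quad t\ge0.
\end{equation}
Here the positive reserve $cN\beta_2(t)$ must dominate the margin shrinkage $N\dot\beta_2(t)\le0$ and the sign-indefinite cross term.

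Next I would estimate the two negative contributions in \eqref{eq:planstar}. The shrinkage is handled using condition~C4: the designed margin satisfies $\beta_2(t)\ge\beta_1(t)/\xi\ge(\beta_1/\xi)e^{-vt}$ by \eqref{disf}--\eqref{aasfdds} and, by the slow controlled expansion, $|\dot\beta_2(t)|\le v\,\beta_2(t)$; since the bound $\beta<c/(2\gamma_\eta)$ forces $v=\beta\gamma_\eta/4\le c/8$, we obtain $N|\dot\beta_2(t)|\le\tfrac12 cN\beta_2(t)$. For the cross term I would use Cauchy--Schwarz, $2\sum_i\widetilde x_i^TP_i\Pi_i\dot s_i\ge-2P_{max}\Pi_{max}\|\boldsymbol{\widetilde x}\|\,\|\boldsymbol{\dot s}\|$, and bound $\|\boldsymbol{\dot s}\|$ through $s_i=P_{\bar\Omega(t)}(\eta_i)$: non-expansiveness of the projection contributes $\|\dot\eta_i\|$, which via \eqref{24e122} produces the coefficient $(3\lambda_N^2k_1^2+12+9\alpha^2L^2_{max})$, while the motion of $\bar\Omega(t)$ contributes the extra $9\alpha^2$ term; together these are exactly the bracket of $M'$. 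Invoking the convergence result (valid because $\beta$ also satisfies \eqref{smaca}), the ISS estimates \eqref{sma3}--\eqref{sma2} with the tightened gains $\beta<c/(8\gamma_\theta^{\widetilde x})$ and $\beta<\gamma_\theta/(8\max\{\gamma^{\widehat\eta}_x,2\gamma^{\widetilde z}_x\})$ give exponential decay at rate $2v$, i.e. $\|\boldsymbol\theta(t)\|^2\lesssim H_1 e^{-2vt}$ and $\|\boldsymbol{\widetilde x}(t)\|\to0$; the choice $v=\beta\gamma_\eta/4$ in \eqref{defv} is precisely what synchronizes this rate with the margin lower bound. Consequently the cross term decays like $e^{-2vt}$ while the reserve decays only like $e^{-vt}$, and the two remaining smallness bounds, $\beta<\sqrt{cH_1/(8\beta_1\xi M')}$ and $\beta<(c^2/(72\beta^2\alpha^2L^2_{max}\kappa_{max}(P)\Pi_{max}^2C^2_{max}))^{1/4}$, are what make the projection/Laplacian part and the set-motion (curvature) part of the cross term each at most a quarter of $cN\beta_2(t)$ uniformly in $t\ge0$, so \eqref{eq:planstar} holds and $h$ is a time-varying CBF.

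I expect the genuinely delicate step to be the estimate of $\|\boldsymbol{\dot s}\|$. One must justify the (almost-everywhere) differentiability of $s_i(t)=P_{\bar\Omega(t)}(\eta_i(t))$ for a smoothly moving convex set, split cleanly the Lipschitz dependence on $\eta_i$ from the normal velocity of the boundary $\partial\bar\Omega(t)$, and bound the latter by the (slow, designed) expansion speed using the bounded-curvature conditions in Assumption~\ref{as3} and C3 together with the uniform-gap relation \eqref{disf}. Tying this set-motion bound, $\|\dot y^*(t)\|$, and $\beta_2(t)$ to the single exponent $v$ so that every decaying term in \eqref{eq:planstar} shares the rate dictated by \eqref{defv} is the careful bookkeeping that the constants $H_1$, $M'$, and the various $\beta$-bounds in the statement are engineered to absorb.
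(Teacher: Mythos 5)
Your overall strategy is the same as the paper's: verify the CBF inequality along the closed-loop flow with the designed controller as the witness for the supremum, use the regulator identity $B_i\Psi_i=-A_i\Pi_i$ and the Lyapunov equation \eqref{ria12} to obtain $\dot{\widetilde x}_i=(A_i+B_iK_{1i})\widetilde x_i-\Pi_i\dot s_i$ and the $+c\,\widetilde x_i^TP_i\widetilde x_i$ term, bound $\|\dot s_i\|\le\|\dot\eta_i\|$ by non-expansiveness of the projection, and dominate what remains using the exponential estimate $V_{\boldsymbol\theta,\boldsymbol{\widetilde x}}(t)\le H_1e^{-vt}$ synchronized with $\beta_2(t)\ge(\beta_1/\xi)e^{-vt}$ through C4 and \eqref{defv}. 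The one structural difference is your choice $\alpha(h)=ch$, which cancels the quadratic term exactly and forces the \emph{entire} cross term $2\sum_i\widetilde x_i^TP_i\Pi_i\dot s_i$ to be dominated by $cN\beta_2(t)$. The paper instead ends with $\alpha(h)=\tfrac{c}{8}h$: it applies Young's inequality to the cross term so as to retain a $\tfrac{c}{2}\widetilde x_i^TP_i\widetilde x_i$ reserve, absorbs the $\|\boldsymbol e\|^2\le C_{max}^2\|\boldsymbol{\widetilde x}\|^2$ portion of $\|\dot{\boldsymbol\eta}\|^2$ into that reserve (this is precisely the origin of the fourth-root bound on $\beta$ in the lemma), and is then left needing only $\tfrac{c}{8}\beta_2(t)\ge\beta^2M'V_{\boldsymbol\theta}(t)$. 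Your variant also closes, but it leans on the exponential decay of $\|\boldsymbol{\widetilde x}\|$ itself (which the paper's route never needs) and produces $\beta$-bounds of a different form than the ones stated in the lemma.

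Two corrections to your bookkeeping. First, the rate claim is wrong: the combined Lyapunov estimate gives $V_{\boldsymbol\theta,\boldsymbol{\widetilde x}}(t)\le H_1e^{-vt}$, i.e.\ squared norms decay at rate $v$, not $2v$ (a system with internal decay rate near $2v$ driven by an input $\|\dot y^*\|^2\lesssim e^{-2vt}$ cannot be claimed to decay faster than the slower exponent without picking up a resonant $te^{-2vt}$ factor, and the paper only claims rate $v$). Hence the cross term, being a product of two norms each decaying like $e^{-vt/2}$, decays like $e^{-vt}$ --- \emph{exactly} the rate of the reserve $cN\beta_2(t)$, with no rate separation. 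Your conclusion survives because, with matched rates, uniform-in-$t$ domination follows purely from the coefficient smallness in the $\beta$-bounds, which you also invoke; but the sentence asserting $e^{-2vt}$ versus $e^{-vt}$ should be deleted rather than relied upon. Second, the $9\alpha^2$ term inside $M'$ does not come from the motion of $\partial\bar\Omega(t)$; it is the coefficient of $\|\boldsymbol{\widetilde z}\|^2$ arising from the $\alpha\beta z_i$ term in \eqref{24e122}. In fact the paper's bound on $\|\dot s_i\|$ freezes the set at time $t$ and silently discards the set-motion contribution you rightly identify as the delicate point --- on that issue your proposal is more careful than the paper itself.
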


\begin{proof}
See Appendix \ref{2ap5}.
\end{proof}

\begin{theorem}\label{lem46}
Suppose that Assumptions \ref{as1}-\ref{as42} hold and the initial output of each agent $i$ satisfies $y_i(0)\in \int(\Omega)$.  Considering the MASs~\eqref{sys} with the distributed dynamic decision-making
and control protocol \eqref{24e1}, the \emph{safety objective} is achieved, i.e.,  $y_i(t)\in\Omega$ for all $t\geq 0$.

\end{theorem}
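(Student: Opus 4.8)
The plan is to read the \emph{safety objective} as a forward-invariance statement for the barrier of Lemma~\ref{smaas} and then translate that invariance into the geometric containment $y_i(t)\in\Omega$ through the chain of inclusions $x_i(t)\in\mathcal{R}_{s_i}(t)\Rightarrow y_i(t)\in\mathcal{S}_{s_i,\bar\Omega}(t)\subseteq\Omega$ supplied by Lemma~\ref{lem523} and the defining property~\eqref{uhas}. In this sense the theorem is an assembly of the two preceding lemmas via the CBF invariance principle of Lemma~\ref{leas}: the analytic effort lives in those lemmas, while here the work is to verify the hypotheses and stitch the pieces together.

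First I would verify initial feasibility $h(\boldsymbol{x}(0),\boldsymbol{s}(0),0)\geq 0$. With $C_i=[I_p,\boldsymbol{0_{p\times(n_i-p)}}]$ and $\Pi_i=[I_p;\boldsymbol{0_{(n_i-p)\times p}}]$, Assumption~\ref{as42} forces the tail components $x_{ij}(0)=0$ for $j>p$, so that $\tilde{x}_i(0)=x_i(0)-\Pi_i s_i(0)$ carries only the output-level mismatch $y_i(0)-s_i(0)$ in its first $p$ entries. Since $y_i(0)\in\int(\Omega)$ by hypothesis, the second conclusion of Lemma~\ref{lem523} gives $x_i(0)\in\mathcal{R}_{s_i}(0)$, i.e. $\|\tilde{x}_i(0)\|_{P_i}$ stays below the radius of $\mathcal{R}_{s_i}(0)$; this is exactly $h_i(x_i(0),s_i(0),0)\geq 0$, and summing over $i$ yields the initial barrier condition.

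Next I would confirm that the applied controller $u_i=K_{1i}x_i+K_{2i}s_i$ of~\eqref{24e12} keeps the barrier admissible, i.e. lies in the set $K(x,t)$ of Lemma~\ref{leas}. The key computation is the closed-loop error dynamics: substituting $K_{2i}=\Psi_i-K_{1i}\Pi_i$ and using the regulator identity $A_i\Pi_i+B_i\Psi_i=\boldsymbol{0}$ from~\eqref{ch4ass7}, one gets $\dot{\tilde{x}}_i=(A_i+B_iK_{1i})\tilde{x}_i-\Pi_i\dot{s}_i$. Differentiating $h_i=\beta_2(t)-\tilde{x}_i^TP_i\tilde{x}_i$ and invoking the Lyapunov equation~\eqref{ria12} to replace $\tilde{x}_i^T[(A_i+B_iK_{1i})^TP_i+P_i(A_i+B_iK_{1i})]\tilde{x}_i$ by $-c\,\tilde{x}_i^TP_i\tilde{x}_i$ collapses $\dot{h}+\alpha(h)$ into a part governed by the decay rate $c$, namely $N\dot{\beta}_2(t)+cN\beta_2(t)+(a-c)h$, plus the coupling term $2\sum_i\tilde{x}_i^TP_i\Pi_i\dot{s}_i$. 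This is precisely the quantity that Lemma~\ref{smaas} certifies to be nonnegative under the stated bounds on $\beta$. Because the barrier is separable and that certificate controls each coupling term $2\tilde{x}_i^TP_i\Pi_i\dot{s}_i$ individually, each summand obeys $\dot{h}_i+\alpha(h_i)\geq 0$; together with $h_i(x_i(0),s_i(0),0)\geq 0$, Lemma~\ref{leas} then yields $h_i(x_i(t),s_i(t),t)\geq 0$, i.e. $x_i(t)\in\mathcal{R}_{s_i}(t)$, for every $i$ and all $t\geq 0$. The first conclusion of Lemma~\ref{lem523} gives $y_i(t)\in\mathcal{S}_{s_i,\bar\Omega}(t)$, and $\mathcal{S}_{s_i,\bar\Omega}(t)\subseteq\Omega$ by~\eqref{uhas}; hence $y_i(t)\in\Omega$ for all $t\geq 0$.

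The hard part will be controlling the sign of the coupling term $2\sum_i\tilde{x}_i^TP_i\Pi_i\dot{s}_i$, which is where the \emph{adaptive, slow} expansion of $\bar\Omega(t)$ becomes indispensable. The reference rate $\dot{s}_i$ inherits, through the (nonexpansive) projection $s_i=P_{\bar\Omega(t)}(\eta_i)$, the bounds on $\|\boldsymbol\theta\|$, $\|\boldsymbol{\widetilde{x}}\|$ and $\|\dot{y}^*(t)\|$ coming from the ISS/small-gain estimates behind Theorem~\ref{the31}; condition~\eqref{aasfdds} forces both $\beta_1(t)$ and $\|\dot{y}^*(t)\|$ to decay no slower than $e^{-vt}$ with the matched rate $v=\beta\gamma_\eta/4$ of~\eqref{defv}, so that $\dot{s}_i$ shrinks at least as fast as the safety margin $\beta_2(t)$ contracts. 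Balancing these exponential rates so that the dissipation $c$ dominates the coupling term — and simultaneously reconciling the aggregate bound $h\geq 0$ with the per-agent requirement $h_i\geq 0$ — is the delicate point; it is exactly what pins down the two-sided constraint on $\beta$ in Lemma~\ref{smaas}, and it is why a sufficiently slow expansion speed cannot be dispensed with.
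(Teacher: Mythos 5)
Your proposal follows exactly the paper's route: establish forward invariance of the barrier $h$ from Lemma~\ref{smaas} via the CBF invariance principle of Lemma~\ref{leas} (after checking initial feasibility from Assumption~\ref{as42} and $y_i(0)\in\int(\Omega)$), then pass from $x_i(t)\in\mathcal{R}_{s_i}(t)$ to $y_i(t)\in\mathcal{S}_{s_i,\bar\Omega}(t)\subseteq\Omega$ by Lemma~\ref{lem523} and~\eqref{uhas}. The paper's own proof is precisely this three-step assembly, stated in three lines; your added verification of the hypotheses (initial barrier nonnegativity, admissibility of the controller~\eqref{24e12} via the error dynamics and the Lyapunov equation~\eqref{ria12}) and your flagging of the aggregate-versus-per-agent barrier subtlety only make explicit what the paper compresses.
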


\begin{proof}
By applying time-varying CBF techniques and using Lemmas \ref{leas} and \ref{smaas}, we have
$x_i(t)\in\mathcal{R}_{s_i}(t)$.
By Lemma \ref{lem523}, $y_i(t)\in \mathcal{S}_{s_i,\bar\Omega}(t)\subseteq\Omega.$ This guarantees safety.
\end{proof}
\begin{remark}
Here we propose a novel approach to address the safety control problem, particularly when the expected convergence point is positioned on the safety set's boundary. Specifically,  our approach constructs a shrinking set within the safety set to approximate the convergence point accurately. A safety reference controller is then developed to confine the system state within this shrinking set. This approach differs from existing QP methods \cite{xu2015robustness} by preserving the safety objective without compromising control objectives. Detailed discussion of this aspect, however, is beyond the scope of this paper.

\end{remark}

\section{Simulation}\label{exam}
Consider a  continuous-time  MAS with five agents whose dynamics are described as follows.
\begin{equation*}
    A_i=\left[
	\begin{matrix}
		0 & 1 & 0 \\
		0 & 0 & 1 \\
		-1 & -2 & -2-i
	\end{matrix}
	\right]
	, B_i=\left[
	\begin{matrix}
		1 & 0 \\
		0 & 1 \\
		1 & 1
	\end{matrix}
	\right]
	, C_i=\left[
	\begin{matrix}
		1 & 0 & 0\\
		0 & 1 & 0\\
	\end{matrix}
	\right]
\end{equation*}
where $i=1,\ldots,5$. The connectivity weights in the graph $\mathcal{G}$ are  $a_{12}=a_{21}=a_{23}=a_{32}=a_{34}=a_{43}=a_{45}=a_{15}=a_{51}=1;$ $a_{ij}=0$, otherwise.
The objective function is $f(r)=\sum_{i=1}^{5} f_i(r)=\frac{1}{2}\|r-e_i\|^2,$ where $e_i=\col(i,i),~i=1,2,3,4$. The constraint set $\Omega:=\left\{x \in \mathbb{R}^2 | \|x\| \leq 2\right\}.$
The initial outputs are $y_1(0)=\col(0,0),y_2(0)=\col(1.1,1.1),y_3(0)=\col(0.1,1),y_4(0)=\col(-0.1,0.6),y_5(0)=\col(-1,-1.1).$

Choose the gain matrices $K_{1i}$ and $K_{2_i}$ as follows.
\begin{align*}
	 K_{11}=\left[\begin{matrix}
			-1.63 & 0.97 &  0.24\\
			0.28&  -3 & -1.54
		\end{matrix}\right],	&~ K_{21}=\left[\begin{matrix}
			1.7& -1.79\\
			-0.42&2.65
		\end{matrix}\right]\\
  K_{12}=\left[\begin{matrix}
			-1.56 &1.04& 0.56\\
			0.53 &-2.6& -1.37
		\end{matrix}\right], &~K_{22}=\left[\begin{matrix}
			1.68 &  -1.7\\
			-0.61 &   2.41
		\end{matrix}\right]\\
K_{13}=\left[\begin{matrix}
			-1.48&1.15&0.92\\
			0.719&-2.34&-1.11
		\end{matrix}\right],
  	&~ K_{23}=\left[\begin{matrix}
			1.63  & -1.69\\
			-0.72  &  2.28
		\end{matrix}\right]\\
	 K_{14}=\left[\begin{matrix}
			-1.33&1.33&1.38\\
			0.78&-2.2&-0.84
		\end{matrix}\right],	&~ K_{24}=\left[\begin{matrix}
			1.52&-1.73\\
			-0.76&2.26
		\end{matrix}\right]\\
  	K_{15}=\left[\begin{matrix}
			-3.09&1.48&0.86\\
			2.41&-1.12&1.36
		\end{matrix}\right],	& ~K_{25}=\left[\begin{matrix}
			3.2&-2.16\\
			-2.11&2.01
		\end{matrix}\right].\\
	\end{align*}
It can be verified that  $A_i+B_i K_i,~i=1,\cdots,5,$ are Hurwize, and $K_{2i}=\Psi_i-K_{1i}\Pi_i$.  The \emph{ expanding constraint set} $\bar\Omega(t)=\left\{x \in \mathbb{R}^2 \|x\| \leq 2-0.9e^{-0.5t}\right\}$ and the \emph{shrinking safety set} $\mathcal{S}_{s_i,\bar\Omega}(t)$ is $\mathcal{B}(s_i(t),r(t))$, where $r(t)=0.9e^{-0.5t}$.
	The parameters are chosen as $\alpha=0.1,~K_1=5,k_2=10$ and $\beta=0.1$. The trajectories of agents' output $y_i$ and the reference signal $s_i$ are shown in Fig. \ref{fig2}. It is shown that all the outputs $y_i$ and converge to the optimal solution $y^*=(\sqrt{2},\sqrt{2})$ and remain within $\Omega.$
	\begin{figure}[h]
		\centering
		\includegraphics[width=0.5\textwidth]{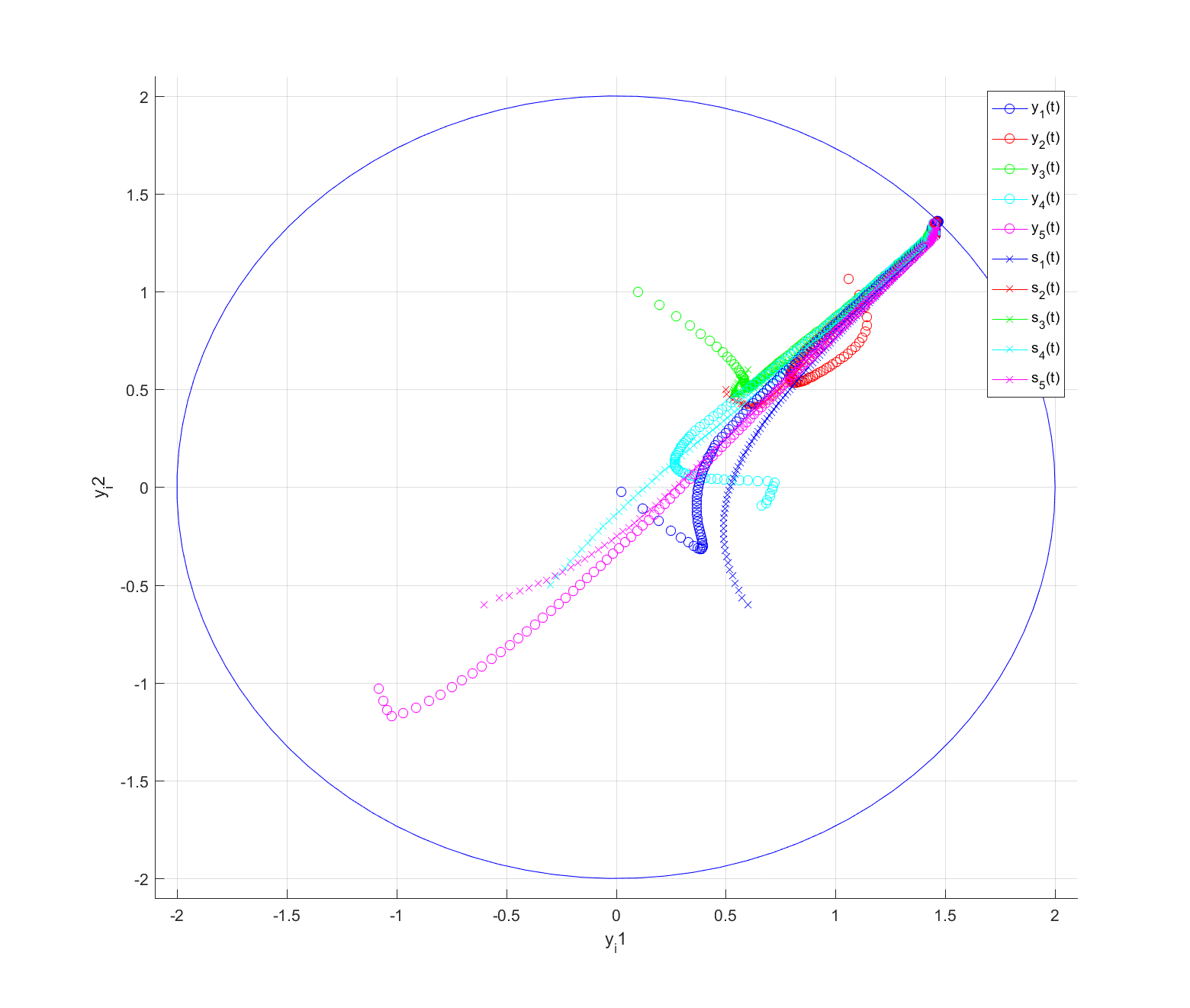}
		\caption{The trajectories of agents' output $y_i$ and the reference signal $s_i$}\label{fig2}
	\end{figure}

\section{Conclusion}\label{con}
In this paper, we investigated the distributed
optimal output consensus control problem of linear heterogeneous MASs subject to safety constraints.
By proposing an adaptive expanding set projection
method, we developed a novel dynamic decision-making and control protocol that achieve both convergence and safety requirements simultaneously, even when the optimal
solution lies at the boundary of the safety zone. We also established the convergence analysis and safety analysis of the closed-loop system by small-gain theorem and time-varying CBF theory.
Future works may consider nonlinear MASs and the case with different safety constraints. Another direction is the distributed safety optimization with dynamic objective functions.

\appendices
\setcounter{equation}{0}
\renewcommand\theequation{\Roman{section}.\arabic{equation}}
\renewcommand\thelemma{\Roman{section}.\arabic{lemma}}
\renewcommand\theclaim{\Roman{section}.\arabic{claim}}

 \section{{Proof~of~Lemma~\ref{lem4e1}}}\label{2ap3}

Since  $z_i(0) = \nabla f_i(y_i(0))$, we have $\bar{z}(0) = \frac{1}{N}\sum_{i=1}^N\nabla f_i(y_i(0))$. Then, with~\eqref{24e11} for all $t>0$, we obtain $\dot{\bar{z}}(t)=\frac{1}{N}\sum_{i=1}^N\frac{d}{dt}\nabla f_i(y_i(t)),$ which further implies
$\bar{z}(t)=\frac{1}{N}\sum_{i=1}^N\nabla f_i(y_i(t)),~\forall t\geq0.$

By defining $\eta(\infty)=\lim_{t\rightarrow\infty}\eta_{i}(t)$ and  $z(\infty)=\lim_{t\rightarrow\infty}z_{i}(t)$ for all $i\in{\mathcal{V}}$, $s(\infty)=\lim_{t\rightarrow\infty}y_{i}(t)$  implies
\begin{equation}
z(\infty)=\frac{1}{N}\sum_{i=1}^N\nabla f_i\left(s(\infty)\right)=\nabla f\left(s(\infty)\right).\label{zinfinity}
\end{equation}

Since  $\lim_{t\rightarrow\infty}\dot {\eta}_i(t)=0$ and $\lim_{t\rightarrow\infty}\eta_i(t)=\lim_{t\rightarrow\infty}{\eta}_j(t)$, we have the following equality from~\eqref{24e122} and~\eqref{zinfinity}:
\begin{equation}\label{adss}
s(\infty)-\alpha \nabla f\left(s(\infty)\right)=\eta(\infty).
\end{equation}
Taking the projection on both sides of~\eqref{adss} and using the fact $s(\infty)=P_{\Omega}\left(\eta(\infty)\right)$, we arrive at
\begin{equation}\label{adsaa}
P_{\Omega}\left(s(\infty)-\alpha \nabla f\left(s(\infty)\right)\right)=s(\infty).
\end{equation}
Then the equilibrium point $s(\infty)$ of \eqref{24e1} equals to the optimal solution $y^*$ to problem~\eqref{problem1}.

\section{{Proof~of~Inequality~\eqref{sma3}}}\label{2ap1}
The following notations are used throughout this proof.

Denote $\widetilde{z} _i(t):=z_i(t)-\bar{z}(t)$, $\bar{z}(t):=\sum_{i=1}^Nz_i(t)/N,$
$\bar\eta(t):=\sum_{i=1}^N\eta_i(t)/N$, $\widetilde\eta_i(t):=\eta_i(t)-\bar\eta(t)$,  $e_i(t):=y_i(t)-P_{\bar\Omega(t)}(\eta_i(t))$, $\eta'(t):=\bar\eta(t)-\eta^*(t)$, $y^*(t):=P_{\bar\Omega(t)}(\eta^*(t))$, $b(t) := y^*(t) - \eta^*(t)$, $z^*(t):=\sum_{i=1}^N\nabla f_i(y^*(t))/N$, $\widehat z_i(t):=z_i(t)-z^*(t)$,  $\widehat{\eta}_i(t):=\eta_i(t)-\eta^*(t)$, $\bar{s}(t)=\frac{1}{N}\sum_{j=1}^Ns_i(t)$, $s'(t)=P_{\bar\Omega(t)}(\bar{s}(t))$ $\widetilde{s}_i(t)=s_i(t)-\bar{s}(t)$ and $\widehat{s}_i(t):=s_i(t)-y^*(t)$. Let $\boldsymbol{\widetilde\eta}(t):=\col(\widetilde\eta_1(t),\cdots,\widetilde\eta_N(t))$, $\boldsymbol{\eta}(t):=\col(\eta_1(t),\cdots,\eta_N(t))$,   $\boldsymbol{\widetilde{z}}(t):=\text{col}(\widetilde{z}_1(t),\cdots,\widetilde{z}_N(t))$, $\boldsymbol{y}(t):=\text{col}(y_1(t),\cdots,y_N(t))$, $\boldsymbol{{e}}(t):=\text{col}({e}_1(t),\cdots,{e}_N(t))$, $\boldsymbol{\widetilde{s}}(t):=\text{col}(\widetilde{s}_1(t),\cdots,\widetilde{s}_N(t)),$
$\boldsymbol{\widehat{z}}(t):=\text{col}(\widehat{z}_1(t),\cdots,\widehat{z}_N(t))$
and $\boldsymbol{\widehat{\eta}}(t):=\text{col}(\widehat{\eta}_1(t),\cdots,\widehat{\eta}_N(t))$.

First,  we will examine the consensusability of the variables $\eta_i(t)$ and $z_i(t)$ in the following two lemmas.
\begin{lemma}\label{lem41}
 Consider  the  algorithm  \eqref{24e1} and construct the Lyapunov  function as follows.
\begin{eqnarray}
    V_{\boldsymbol{\widetilde{z}}}(t)=\sum_{i=1}^N\|\widetilde{z}_i(t)\|^2/2.
\end{eqnarray}
 Then
\begin{flalign}\label{asfaaasd1}
   \frac{dV_{\boldsymbol{\widetilde{z}}}(t)}{dt}\leq&-\left(k_2\lambda_2-\frac{72\alpha^2L^2_{max}}{k_2}\right)\beta V_{\boldsymbol{\widetilde{z}}}(t)
   \nonumber\\
&+\frac{4L^2_{max}}{k_2}\left(6\lambda_N^2 k_1^2+24+18\alpha^2L^2_{max}\right)\beta\nonumber\\
 &\times(\|\boldsymbol{\widetilde\eta}(t)\|^2+N\|\eta'(t)\|^2)+\frac{36\alpha^2L^4_{max}\beta}{k_2}\|\boldsymbol{e}(t)\|^2\nonumber\\
 &+\frac{4L^2_{max}\beta}{k_2}\|\dot {\boldsymbol{e}}(t)\|^2
\end{flalign}
where   $\lambda_2$ is the smallest eigenvalue of the Laplacian matrix $\mathcal{L}_{\mathcal{G}}$ for the graph $\mathcal{G},$ and $L_{max}:=\max_{i\in\mathcal{V}}L_i.$
\end{lemma}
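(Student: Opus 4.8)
The plan is to differentiate $V_{\boldsymbol{\widetilde z}}$ along the trajectories of \eqref{24e11} and to isolate a consensus dissipation term that dominates every forcing contribution. First I would write the dynamics of the disagreement variable $\widetilde z_i = z_i-\bar z$. Using the identity $\bar z(t)=\frac1N\sum_{i=1}^N\nabla f_i(y_i(t))$ established in Appendix \ref{2ap3}, we get $\dot{\bar z}=\frac1N\sum_{l=1}^N\frac{d}{dt}\nabla f_l(y_l)$; and since $z_j-z_i=\widetilde z_j-\widetilde z_i$, the Laplacian coupling acts only on the disagreement part, giving $\dot{\widetilde z}_i=\frac{k_2}{\beta}\sum_j a_{ij}(\widetilde z_j-\widetilde z_i)+g_i$ with forcing $g_i:=\frac{d}{dt}\nabla f_i(y_i)-\dot{\bar z}$, i.e. in stacked form $\dot{\boldsymbol{\widetilde z}}=-\frac{k_2}{\beta}(\mathcal{L}_{\mathcal{G}}\otimes I_p)\boldsymbol{\widetilde z}+\boldsymbol g$.

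Next I would compute $\dot V_{\boldsymbol{\widetilde z}}=\langle\boldsymbol{\widetilde z},\dot{\boldsymbol{\widetilde z}}\rangle$. The quadratic coupling term is $-\frac{k_2}{\beta}\boldsymbol{\widetilde z}^{\T}(\mathcal{L}_{\mathcal{G}}\otimes I_p)\boldsymbol{\widetilde z}$; because $\sum_i\widetilde z_i=0$, the vector $\boldsymbol{\widetilde z}$ lies in the orthogonal complement of the consensus subspace, so the spectral bound $\boldsymbol{\widetilde z}^{\T}(\mathcal{L}_{\mathcal{G}}\otimes I_p)\boldsymbol{\widetilde z}\geq\lambda_2\|\boldsymbol{\widetilde z}\|^2$ yields a clean dissipation proportional to the algebraic connectivity $\lambda_2$. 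The remaining task is to bound the forcing inner product $\langle\boldsymbol{\widetilde z},\boldsymbol g\rangle$. I would use Assumption \ref{as1} (smoothness, $\|\nabla^2 f_i\|\le L_{max}$) to write $\|g_i\|\le L_{max}(\|\dot y_i\|+\frac1N\sum_l\|\dot y_l\|)$ and then decompose $\dot y_i=\dot s_i+\dot e_i$ via $y_i=s_i+e_i$. For $\dot s_i$ I would invoke nonexpansiveness of the projection $P_{\bar\Omega(t)}$ to control it by $\|\dot\eta_i\|$ plus a contribution from the motion of the set, and substitute \eqref{24e122} for $\dot\eta_i$. Splitting the three channels of $\dot\eta_i$ by Cauchy--Schwarz, $\|a+b+c\|^2\le3(\|a\|^2+\|b\|^2+\|c\|^2)$, produces the coupling gain $\lambda_N^2 k_1^2$, the constant offsets, and the $\alpha^2 L_{max}^2$ factor, after rewriting $z_i$ and $\eta_i$ through $\widetilde z_i$, $\widetilde\eta_i$, and $\eta'$.

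Finally I would apply Young's inequality to $\langle\boldsymbol{\widetilde z},\boldsymbol g\rangle$. The part of $g_i$ fed by the $-\alpha\beta z_i$ channel is itself proportional to $\widetilde z_i$, so it is absorbed into a fixed fraction of the consensus dissipation, which is precisely what lowers the effective dissipation coefficient by $72\alpha^2 L_{max}^2/k_2$; the surviving terms are weighted so as to land the stated coefficients $\frac{4L^2_{max}}{k_2}(6\lambda_N^2 k_1^2+24+18\alpha^2L^2_{max})$ on $\|\boldsymbol{\widetilde\eta}\|^2+N\|\eta'\|^2$, $\frac{36\alpha^2L^4_{max}}{k_2}$ on $\|\boldsymbol e\|^2$, and $\frac{4L^2_{max}}{k_2}$ on $\|\dot{\boldsymbol e}\|^2$. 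I expect the main obstacle to be the bookkeeping: choosing the Young weights and the $\beta$-powers consistently with the fast $k_2/\beta$ time scale so that the numerical constants come out exactly as stated, together with rigorously bounding the derivative of the projection onto the expanding set $\bar\Omega(t)$ (the set-motion term), where Assumption \ref{as3} and condition C3 with the normal-curvature bound are needed to keep $\dot s_i$ well defined and controlled.
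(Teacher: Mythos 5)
Your plan matches the paper's proof essentially step for step: the same disagreement dynamics driven by the forcing $\widetilde p_i = \frac{d}{dt}\nabla f_i(y_i)-\dot{\bar z}$, the same $\lambda_2$ spectral dissipation plus Young's inequality on $\langle\boldsymbol{\widetilde z},\boldsymbol{\widetilde p}\rangle$, the same Hessian bound $\|\boldsymbol{\widetilde p}\|^2\le 4L^2_{max}\|\dot{\boldsymbol y}\|^2$ followed by decomposing $\dot y_i$ through the $\eta$-dynamics \eqref{24e122}, with the $z$-channel of the forcing producing exactly the $72\alpha^2L^2_{max}/k_2$ reduction of the dissipation coefficient. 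The only differences are cosmetic: the paper routes the bookkeeping through the intermediates $\widehat\eta_i=\eta_i-\eta^*$ and $\widehat z_i=z_i-z^*$ before converting to $(\boldsymbol{\widetilde\eta},\eta')$ at the end, and it simply freezes the set in bounding $\|\dot s_i\|\le\|\dot\eta_i\|$, whereas you (more carefully) flag the set-motion contribution of $\bar\Omega(t)$ as something to be controlled.
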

\begin{proof}
Denote  $$p_i:=\frac{\partial ^{2}f_i}{\partial x^{2}}\Big|_{x=y_i(t)} \dot y_i(t).$$ Then, it follows from the second equation in \eqref{24e1} that the derivative of $\bar{z} (t)$ satisfies that
\begin{align}\label{khkkaaaas}
	\dot{\bar z}(t)=\sum_{i=1}^{N}p_i(t)/N.
\end{align}

Since $\widetilde{z} _i(t)=z_i(t)-\bar{z}(t)$, it follows from \eqref{24e1} that
\begin{flalign}\label{ojs}
	\dot{\widetilde z}_i(t)&=\dot{z}_i(t)-\dot{\bar{z}}(t)\nonumber\\
	&=\frac{k_2}{\beta} \sum_{j=1}^Na_{ij}(\widetilde{z}_j(t)-\widetilde{z}_i(t))+p_i(t) - \sum_{i=1}^{N} p_i(t)/N\nonumber\\
 &=\frac{k_2}{\beta}\sum_{j=1}^Na_{ij}(\widetilde{z}_j(t)-\widetilde{z}_i(t))+\widetilde{p}_i(t)
\end{flalign}
where $\widetilde{p}_i(t):=p_i(t)-\sum_{j=1}^Np_j(t)/N,~1\leq i\leq N.$

By \eqref{ojs}, we have
\begin{eqnarray}\label{uwdf}
    \frac{dV_{\boldsymbol{\widetilde{z}}}(t)}{dt}\!\!\!\!&\leq&\!\!\!\! \sum_{i=1}^N\left\langle  \widetilde{z} _i(t),    \frac{k_2}{\beta} \sum_{j=1}^Na_{ij}(\widetilde{z}_j(t)-\widetilde{z}_i(t))  \right \rangle                            \nonumber\\
    \!\!\!\!&&\!\!\!\! +\sum_{i=1}^N\left\langle  \widetilde{z} _i(t),   \widetilde{p}_i(t)  \right \rangle                            \nonumber\\
        \!\!\!\!&\leq&\!\!\!\!-\frac{2k_2\lambda_2}{\beta} V_{\boldsymbol{\widetilde{z}}}(t)+\sum_{i=1}^N\left\langle  \widetilde{z} _i(t),   \widetilde{p}_i(t)  \right \rangle  \nonumber\\
    \!\!\!\!&\leq&\!\!\!\!-\frac{k_2\lambda_2}{\beta} V_{\boldsymbol{\widetilde{z}}}(t)+\frac{ \beta}{2  k_2}\|\boldsymbol{\widetilde{p}}(t)\|^2
\end{eqnarray}
where $\boldsymbol{\widetilde{p}}(t):=\text{col}(\widetilde{p}_1(t),\cdots,\widetilde{p}_N(t))$.

 Since $y^*(t)-\eta^*(t)-z^*(t)=0,$   it follows from  \eqref{24e1} that
\begin{flalign}\label{ahuhfa}
\dot {\eta}_i(t)=&\beta k_1\sum_{j=1}^Na_{ij}(\widehat\eta_j(t)-\widehat\eta_i(t))\nonumber\\
&+\beta \left(\widehat s_i(t)-\widehat \eta_i(t)-\alpha \widehat z_i(t)\right).
\end{flalign}

Clearly,
\begin{flalign}\label{kajdaw}
\|P_{\bar\Omega(t)}(\eta_i(t))-P_{\bar\Omega(t)}(\eta^*(t)))\leq&\|\eta_i(t)-\eta^*(t)\|=\|\widehat \eta_i(t)\|.
\end{flalign}

For any $a_1,~a_2,~\cdots,a_k\in\mathbb{R}^q$, where $k\in\mathbb{N}^{+},$
$\left\|\sum_{i=1}^ka_i\right\|^2\leq k\sum_{i=1}^k\|a_i\|^2.$
Based on this inequality and \eqref{kajdaw}, we have
\begin{equation}
\begin{aligned}
&\|\widehat s_i(t)-\widehat \eta_i(t)\|^2\leq 2\|\widehat s_i(t)\|^2+2\|\widehat \eta_i(t)\|^2\\
&~~~~~~~~\leq 2\|P_{\bar\Omega(t)}(\eta_i(t))-P_{\bar\Omega(t)}(\eta^*(t)))\|^2+2\|\widehat \eta_i(t)\|^2 \\
&~~~~~~~~\leq 4\|\widehat \eta_i(t)\|^2.\label{Theorem34}
\end{aligned}
\end{equation}

It follows from \eqref{ahuhfa} that
\begin{flalign}\label{uhww}
   \|\dot{\boldsymbol{\eta}}(t)\|^2\leq&3\lambda_N^2 k_1^2 \|\boldsymbol{\widehat\eta}(t)\|^2+3\sum_{i=1}^N\|\beta (\widehat s_i(t)-\widehat \eta_i(t))\|^2\nonumber\\
&+3\|\alpha\beta \boldsymbol{\widehat z}(t)\|^2\nonumber\\
  \leq&(3\lambda_N^2 k_1^2\!+\!12)\beta^2 \|\boldsymbol{\widehat\eta}(t)\|^2+3\alpha^2\beta^2\|\boldsymbol{\boldsymbol{\widehat z}}(t)\|^2.
\end{flalign}

Since $z^*(t)=\sum_{i=1}^N\nabla f_i(y^*(t))/N$ and $\bar z(t)=\sum_{i=1}^N\nabla f_i(y_i(t))/N$,
we have
\begin{flalign}
  \|\bar z(t)-z^*(t)\|^2\leq&\frac{2}{N^2}\Big\|\sum_{i=1}^N\nabla f_i(y_i(t))-\sum_{i=1}^N\nabla f_i(s_i(t))\Big\|^2\nonumber\\
& +\frac{2}{N^2}\Big\|\sum_{i=1}^N\nabla f_i(s_i(t))-\sum_{i=1}^N\nabla f_i(y^*(t))\Big\|^2\nonumber\\
\leq&\frac{2L^2_{max}}{N^2}\Big(N\|\boldsymbol{e}(t)\|^2+\sum_{i=1}^N\|s_i(t)-y^*(t)\|^2\Big)\nonumber\\
\leq&\frac{2L^2_{max}}{N}(\|\boldsymbol{e}(t)\|^2+\|\boldsymbol{\widehat\eta}(t)\|^2).
\end{flalign}
It implies
\begin{flalign}\label{jklaw}
  \|\widehat{z}_i(t)\|^2\leq&3\|\widetilde z_i(t)\|^2+\frac{3}{2}\|\bar z(t)-z^*(t)\|^2\nonumber\\
   \leq&3\|\widetilde z_i(t)\|^2+\frac{3L^2_{max}}{N}(\|\boldsymbol{e}(t)\|^2\!+\!\|\boldsymbol{\widehat\eta}(t)\|^2).
\end{flalign}

Since $\boldsymbol{y}(t)=\boldsymbol{\eta}(t) +\boldsymbol{e}(t)$, we have
\begin{flalign}
\label{kaiiw}
\|\dot {\boldsymbol{y}}(t)\|^2\leq& 2\|\dot {\boldsymbol{e}}(t)\|^2+2\|\dot{\boldsymbol{\eta}}(t)\|^2\nonumber\\
 \leq&\left(6\lambda_N^2 k_1^2+24+18\alpha^2L^2_{max}\right)\beta^2\|\boldsymbol{\widehat\eta}(t)\|^2\nonumber\\
&+18\alpha^2L^2_{max}\beta^2\|\boldsymbol{e}(t)\|^2\nonumber\\
&+18\alpha^2\beta^2\|\boldsymbol{\widetilde z}(t)\|^2+2\|\dot{\boldsymbol{e} }(t)\|^2.
\end{flalign}

Since $f_i$ is $L_i$ smooth, the Hessian Matrix $\frac{\partial ^{2}f_i}{\partial x^{2}}\leq L_i I_{p},~\forall x\in \mathbb{R}^p.$
It implies
\begin{flalign}\label{oajjw}
\|\boldsymbol{\widetilde{p}}(t)\|^2\leq 4\sum_{i=1}^N\|p_i(t)\|^2\leq 4L^2_{max}\|\dot{\boldsymbol{y}} (t)\|^2.
\end{flalign}

It follows from \eqref{uwdf}, \eqref{kaiiw} and \eqref{oajjw} that
\begin{flalign}\label{asfaaas}
   \frac{dV_{\boldsymbol{\widetilde{z}}}(t)}{dt}\leq&-\left(\frac{k_2}{\beta}\lambda_2-\frac{72\alpha^2\beta^3L^2_{max}}{k_2}\right) V_{\boldsymbol{\widetilde{z}}}(t)
   \nonumber\\
&+\frac{2L^2_{max}}{k_2}\left(6\lambda_N^2 k_1^2+24+18\alpha^2L^2_{max}\right)\beta^3\|\boldsymbol{\widehat\eta}(t)\|^2\nonumber\\
 &+\frac{36\alpha^2L^4_{max}\beta^3}{k_2}\|\boldsymbol{e}(t)\|^2+\frac{4L^2_{max}\beta}{k_2}\|\dot{\boldsymbol{e}}(t)\|^2.
 \end{flalign}

Since $0<\beta<1$, \eqref{asfaaas} implies \eqref{asfaaasd1}.
\end{proof}

\begin{lemma}\label{lem42}
    Consider the  algorithm  \eqref{24e1} and construct the following Lyapunov function.
\begin{equation}
    V_{\boldsymbol{\widetilde{\eta}}}(t)=\frac{1}{2}\sum_{i=1}^N\|\widetilde{\eta}_i(t)\|^2.
\end{equation}
Then we have
\begin{equation}\label{aklew}
\begin{aligned}
   \frac{d V_{\boldsymbol{\widetilde{\eta}}}(t)}{dt}\leq-\beta(k_1\lambda_2-3) V_{\boldsymbol{\widetilde{\eta}}}(t)+\frac{\beta\alpha^2}{2k_1\lambda_2}\|\boldsymbol{\widetilde{z}}(t)\|^2.
\end{aligned}
\end{equation}

\end{lemma}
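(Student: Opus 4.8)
The plan is to differentiate $V_{\boldsymbol{\widetilde{\eta}}}$ along the trajectory, decompose $\dot V_{\boldsymbol{\widetilde{\eta}}}$ into one dissipative consensus term plus two cross terms, and then dominate the cross terms by the spectral gap and Young's inequality. First I would form the averaged dynamics: summing \eqref{24e122} over $i$ and using that $\mathcal{G}$ is undirected, so $\sum_{i,j}a_{ij}(\eta_j-\eta_i)=0$, gives $\dot{\bar\eta}=\beta(\bar s-\bar\eta)-\alpha\beta\bar z$ with $\bar s:=\tfrac1N\sum_i s_i$. Subtracting this from \eqref{24e122} and recalling $s_i=P_{\bar\Omega(t)}(\eta_i)$ from \eqref{24e133}, the averaged parts cancel and I obtain the error dynamics
$$\dot{\widetilde\eta}_i=\beta k_1\sum_{j=1}^N a_{ij}(\widetilde\eta_j-\widetilde\eta_i)+\beta(\widetilde s_i-\widetilde\eta_i)-\alpha\beta\widetilde z_i,$$
where $\widetilde s_i:=s_i-\bar s$. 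Then $\dot V_{\boldsymbol{\widetilde{\eta}}}=\sum_i\langle\widetilde\eta_i,\dot{\widetilde\eta}_i\rangle$ splits into the Laplacian term, the term $\beta\sum_i\langle\widetilde\eta_i,\widetilde s_i-\widetilde\eta_i\rangle$, and the coupling term $-\alpha\beta\langle\boldsymbol{\widetilde\eta},\boldsymbol{\widetilde z}\rangle$.

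The decisive estimate is the consensus term. Since $\sum_i\widetilde\eta_i=0$ by construction, the stacked vector $\boldsymbol{\widetilde\eta}$ lies in the orthogonal complement of the null space $\mathbf{1}_N\otimes I_p$ of $\mathcal{L}_{\mathcal{G}}\otimes I_p$; hence the variational characterization of $\lambda_2$ yields $\boldsymbol{\widetilde\eta}^T(\mathcal{L}_{\mathcal{G}}\otimes I_p)\boldsymbol{\widetilde\eta}\geq\lambda_2\|\boldsymbol{\widetilde\eta}\|^2$, so that $\beta k_1\sum_i\langle\widetilde\eta_i,\sum_j a_{ij}(\widetilde\eta_j-\widetilde\eta_i)\rangle\leq-\beta k_1\lambda_2\|\boldsymbol{\widetilde\eta}\|^2=-2\beta k_1\lambda_2 V_{\boldsymbol{\widetilde{\eta}}}$. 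This supplies the negative dissipation that drives the whole bound.

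For the remaining two terms I would exploit the nonexpansiveness of the Euclidean projection onto the fixed convex set $\bar\Omega(t)$: since $\|s_i-s_j\|\le\|\eta_i-\eta_j\|$ pairwise, the identity $\sum_i\|s_i-\bar s\|^2=\tfrac{1}{2N}\sum_{i,j}\|s_i-s_j\|^2$ gives $\|\boldsymbol{\widetilde s}\|\le\|\boldsymbol{\widetilde\eta}\|$. Combined with Cauchy--Schwarz this controls $\beta\langle\boldsymbol{\widetilde\eta},\boldsymbol{\widetilde s}\rangle$ by a multiple of $\|\boldsymbol{\widetilde\eta}\|^2$, while the $-\beta\|\boldsymbol{\widetilde\eta}\|^2$ contribution is retained. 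For the coupling term I would apply Young's inequality with the weight tuned to $k_1\lambda_2$, namely $-\alpha\beta\langle\boldsymbol{\widetilde\eta},\boldsymbol{\widetilde z}\rangle\le\frac{\beta k_1\lambda_2}{2}\|\boldsymbol{\widetilde\eta}\|^2+\frac{\beta\alpha^2}{2k_1\lambda_2}\|\boldsymbol{\widetilde z}\|^2$, which reproduces exactly the coefficient $\frac{\beta\alpha^2}{2k_1\lambda_2}$ on $\|\boldsymbol{\widetilde z}\|^2$ appearing in \eqref{aklew}. Collecting the $\|\boldsymbol{\widetilde\eta}\|^2$ coefficients, part of the $-2\beta k_1\lambda_2 V_{\boldsymbol{\widetilde{\eta}}}$ dissipation absorbs the positive contributions from the two Young steps, leaving at worst $-\beta(k_1\lambda_2-3)V_{\boldsymbol{\widetilde{\eta}}}$; the constant $3$ is slack, absorbing the order-one cross-term constants.

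The step I expect to be the main obstacle is the bookkeeping of the projection cross term: one must first justify that at each fixed $t$ the operator $P_{\bar\Omega(t)}$ is genuinely nonexpansive (so $\|\boldsymbol{\widetilde s}\|\le\|\boldsymbol{\widetilde\eta}\|$ holds even though $\bar\Omega(t)$ is moving), and then allocate the available dissipation $-2\beta k_1\lambda_2 V_{\boldsymbol{\widetilde{\eta}}}$ between the $\langle\widetilde\eta,\widetilde s\rangle$ estimate and the Young term for $\widetilde z$, while still retaining a net coefficient no larger than $-\beta(k_1\lambda_2-3)$. Everything else is routine inner-product algebra.
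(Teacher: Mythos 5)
Your proposal is correct and takes essentially the same route as the paper's proof: the identical error dynamics $\dot{\widetilde\eta}_i=\beta k_1\sum_j a_{ij}(\widetilde\eta_j-\widetilde\eta_i)+\beta(\widetilde s_i-\widetilde\eta_i)-\alpha\beta\widetilde z_i$, the same spectral-gap bound $-2\beta k_1\lambda_2 V_{\boldsymbol{\widetilde{\eta}}}$ from $\sum_i\widetilde\eta_i=0$, the same pairwise nonexpansiveness of $P_{\bar\Omega(t)}$ giving $\|\boldsymbol{\widetilde{s}}\|\le\|\boldsymbol{\widetilde{\eta}}\|$, and the identical Young step with weight $k_1\lambda_2$ producing the coefficient $\frac{\beta\alpha^2}{2k_1\lambda_2}$. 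The step you flag as the main obstacle is not one: both $s_i(t)$ and $s_j(t)$ are projections onto the \emph{same} convex set $\bar\Omega(t)$ at the same instant, so nonexpansiveness at each fixed $t$ is immediate and the motion of the set plays no role in this lemma.
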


\begin{proof}
It follows from \eqref{24e122} that
\begin{equation}
    \dot{\bar{\eta}}(t)=\bar{s}(t)-\bar{\eta}(t)-\alpha\bar{z}(t).
\end{equation}

Recalling that $\widetilde{\eta}_i=\eta_i(t)-\bar{\eta}(t)$ and $\widetilde{s}_i(t)=s_i(t)-\bar{s}(t)$, we obtain
\begin{flalign}\label{kau3}
\dot{\widetilde{\eta}}_i(t)&=\beta k_1\sum_{j=1}^Na_{ij}(\widetilde{\eta}_j(t)-\widetilde{\eta}_i(t)) \nonumber\\
&\quad+\beta\widetilde{s}_i(t)-\beta\widetilde{\eta}_i(t)-\beta\alpha\widetilde{z}_i(t).
\end{flalign}

Equation \eqref{kau3} can be compactly expressed as follows.
\begin{equation}\label{kau23}
\begin{aligned}
  \dot{\boldsymbol{\widetilde{\eta}}}(t)=k_1(\mathcal{L}_{\mathcal{G}}\otimes I_p)\boldsymbol{\widetilde{\eta}}(t)+\boldsymbol{\widetilde{s}}(t)-\boldsymbol{\widetilde{\eta}}(t)-\alpha\boldsymbol{\widetilde{z}}(t).
\end{aligned}
\end{equation}

This implies
\begin{equation}\label{radf}
\begin{aligned}
   \frac{d V_{\boldsymbol{\widetilde{\eta}}}(t)}{dt}\leq-2\beta k_1\lambda_2 V_{\boldsymbol{\widetilde{\eta}}}(t)+\beta\langle{\widetilde{\eta}}(t),~\boldsymbol{\widetilde{s}}(t)-\boldsymbol{\widetilde{\eta}}(t)-\alpha\boldsymbol{\widetilde{z}}(t)\rangle.
\end{aligned}
\end{equation}

Using the convexity and Young inequality, we have

\begin{flalign}\label{ukw}
    &~~~~\langle \boldsymbol{\widetilde{\eta}}(t),~\boldsymbol{\widetilde{s}}(t)-\boldsymbol{\widetilde{\eta}}(t)\rangle \nonumber\\
    &\leq -\frac{1}{2}\|\boldsymbol{\widetilde{\eta}}(t)\|^2+\frac{1}{2}\|\boldsymbol{\widetilde{s}}(t)\|^2 \nonumber\\
    &\leq-\frac{1}{2}\|\boldsymbol{\widetilde{\eta}}(t)\|^2+\frac{1}{2N}\sum_{i=1}^N\sum_{j=1}^N\|s_i(t)-s_j(t)\|^2 \nonumber\\
    &\leq-\frac{1}{2}\|\boldsymbol{\widetilde{\eta}}(t)\|^2+\frac{1}{2N}\sum_{i=1}^N\sum_{j=1}^N\|\eta_i(t)-\eta_j(t)\|^2 \nonumber\\
     & \leq\frac{3}{2}\|\boldsymbol{\widetilde{\eta}}(t)\|^2.
\end{flalign}

Applying the Young inequality again, we derive
\begin{equation}\label{afa}
    -\langle{\boldsymbol{\widetilde{\eta}}}(t),~\alpha\boldsymbol{\widetilde{z}}(t)\rangle\leq\frac{k_1\lambda_2}{2}\|\boldsymbol{\widetilde{\eta}}(t)\|^2+\frac{\alpha^2}{2k_1\lambda_2}\|\boldsymbol{\widetilde{z}}(t)\|^2.
\end{equation}

Substituting \eqref{ukw} and \eqref{afa} into \eqref{radf}, we conclude that \eqref{aklew} holds.
\end{proof}

Then we have the following lemma.

\begin{lemma}\label{the1}
Construct the Lyapunov function as follows.
\begin{align}\label{I22}
V_{\boldsymbol\theta}(t)= V_{\boldsymbol{\widetilde z}}(t) + V_{\boldsymbol{\widetilde \eta}}(t) +N V_{\eta'}(t)/2 \nonumber\\
\end{align}
where
\begin{flalign}
V_{\eta'}(t) &= V_1(t) + V_2(t)\nonumber \\
V_1(t) &= \frac{1}{2} \| s'(t) - \bar\eta(t) - b(t) \|^2\nonumber \\
V_2(t) &= \frac{1}{2} \| \bar\eta(t) - \eta^*(t) \|^2 ) \label{adsaa}
\end{flalign}
Given the distributed optimization algorithm \eqref{24e1}, we obtain
\begin{align}\label{jkasc}
\frac{d V_{\boldsymbol\theta}(t)}{dt} &\leq -\beta\gamma_{\eta}V_{\boldsymbol\theta}(t) + \beta\gamma^{\eta^*}_{\eta}\|\dot\eta^*(t)\|^2 \nonumber \\
&\quad + \beta\gamma^{e}_{\eta}\|\boldsymbol{e}(t)\|^2 + \gamma^{\dot{\boldsymbol{e}}}_{\eta}\beta\|\dot{e}(t)\|^2.
\end{align}
\end{lemma}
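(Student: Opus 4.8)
The plan is to establish \eqref{jkasc} by differentiating each of the three summands of $V_{\boldsymbol\theta}$ in \eqref{I22} and adding the resulting estimates. Two of the three pieces are already in hand: Lemma~\ref{lem41} bounds $\dot V_{\boldsymbol{\widetilde z}}$ and Lemma~\ref{lem42} bounds $\dot V_{\boldsymbol{\widetilde\eta}}$, so the only genuinely new ingredient is a differential inequality for $V_{\eta'}=V_1+V_2$, which tracks the optimality gap of the averaged variable $\bar\eta(t)$ relative to the projected-gradient fixed point $\eta^*(t)$ characterized by \eqref{ads}.

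For that new piece I would first compute the averaged dynamics: averaging \eqref{24e122} over $i$ annihilates the Laplacian term (the graph is undirected, so $\sum_{i,j}a_{ij}(\eta_j-\eta_i)=0$), leaving $\dot{\bar\eta}(t)=\beta(\bar s(t)-\bar\eta(t)-\alpha\bar z(t))$. The heart of the argument is to recognize that this averaged map is a perturbation of the exact projected-gradient operator $\bar\eta\mapsto\beta\big(P_{\bar\Omega(t)}(\bar\eta)-\bar\eta-\alpha\nabla f(P_{\bar\Omega(t)}(\bar\eta))\big)$, whose unique fixed point is $\eta^*(t)$ by \eqref{ads}. Differentiating $V_2=\tfrac12\|\bar\eta-\eta^*\|^2$ and using the nonexpansiveness and the variational-inequality (obtuse-angle) property of $P_{\bar\Omega(t)}$ together with the $\sigma$-strong monotonicity and $L_{\max}$-Lipschitz continuity of the gradient (Assumption~\ref{as1}), I would extract a strictly negative term proportional to $-\sigma\alpha\,\|\eta'\|^2$. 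The auxiliary term $V_1$ and the re-projected quantity $s'(t)=P_{\bar\Omega(t)}(\bar s(t))$ are introduced precisely to absorb the discrepancy between $\bar s=\tfrac1N\sum_i P_{\bar\Omega(t)}(\eta_i)$ and the projection of the average $P_{\bar\Omega(t)}(\bar\eta)$, a gap controlled by the consensus error $\|\boldsymbol{\widetilde\eta}\|$; showing contraction for the combined $V_{\eta'}=V_1+V_2$ rather than $V_2$ alone is essential, since only then does the negative term dominate the full $V_{\eta'}$. The remaining forcing contributions — from the consensus errors $\widetilde z_i,\widetilde\eta_i$, the tracking error $e_i$, and the drift $\dot\eta^*$ — would then be peeled off by Young's inequality.

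A key subtlety here is that $\bar\Omega(t)$ moves in time, so $\eta^*(t)$ and $y^*(t)=P_{\bar\Omega(t)}(\eta^*(t))$ are time-varying and $\dot V_2$ carries both the drift $\dot\eta^*(t)$ and a contribution from the motion of the constraint surface. Bounding the latter is exactly where Assumption~\ref{as3} and condition~C3 enter: the normal-curvature bound $M_1$ of $\partial\bar\Omega(t)$ limits how fast the projection operator can rotate, and it produces precisely the term $\alpha^2\big(L_{\max}+\|\nabla f(y^*(t))\|M_1\big)^2$ that appears inside $\beta_1(\alpha)$. Assembling these estimates, I expect to obtain $\dot V_{\eta'}\le -\beta(\text{rate})\,V_{\eta'}+\beta(\cdots)\|\boldsymbol{\widetilde\eta}\|^2+\beta(\cdots)\|\dot\eta^*\|^2+\beta(\cdots)\|\boldsymbol e\|^2+(\cdots)\|\dot{\boldsymbol e}\|^2$, with the coefficients matching the quantities collected into $\beta_3(\alpha)$ and $\beta_4(\alpha)$.

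Finally I would add the three differential inequalities with the weights dictated by \eqref{I22} (in particular the factor $N/2$ on $V_{\eta'}$, which is what produces the $N$ factors in $\gamma^{\eta^*}_\eta$ and $\gamma^e_\eta$). The cross terms must cancel favorably: the decay $-\beta(k_1\lambda_2-3)V_{\boldsymbol{\widetilde\eta}}$ from Lemma~\ref{lem42} has to dominate the $\|\boldsymbol{\widetilde\eta}\|^2$ contributions coming from both $\dot V_{\boldsymbol{\widetilde z}}$ and $\dot V_{\eta'}$, while the decay $-\beta\big(k_2\lambda_2-72\alpha^2L_{\max}^2/k_2\big)V_{\boldsymbol{\widetilde z}}$ must dominate the term $\tfrac{\beta\alpha^2}{2k_1\lambda_2}\|\boldsymbol{\widetilde z}\|^2$ from Lemma~\ref{lem42}. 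These are exactly the large-gain conditions imposed on $k_1,k_2$ in the protocol design, which guarantee that after taking the minimum in the definition of $\gamma_\eta$ the net coefficient on each diagonal term is at least $\beta\gamma_\eta$; identifying the surviving forcing coefficients with $\gamma^{\eta^*}_\eta$, $\gamma^e_\eta$, and $\gamma^{\dot e}_\eta$ then yields \eqref{jkasc}. I expect the main obstacle to be the analysis of $V_{\eta'}$ under the moving projection — proving the contraction in $V_{\eta'}$ from strong monotonicity while simultaneously controlling the curvature-induced rotation of $P_{\bar\Omega(t)}$ and the averaging gap that $V_1$ is designed to handle; the concluding summation is largely bookkeeping to reproduce the stated constants.
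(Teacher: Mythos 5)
Your overall skeleton matches the paper's: invoke Lemma~\ref{lem41} for $V_{\boldsymbol{\widetilde z}}$, Lemma~\ref{lem42} for $V_{\boldsymbol{\widetilde\eta}}$, prove a fresh differential inequality for $V_{\eta'}=V_1+V_2$, and sum with the weights in \eqref{I22}; your closing remarks about the gain conditions on $k_1,k_2$ and the origin of the $N$ factors are also consistent with the paper. The gap is in the one piece you correctly identify as new. To differentiate $V_1=\tfrac12\|s'(t)-\bar\eta(t)-b(t)\|^2$ you need $\dot s'(t)$, i.e., the time derivative of the projected trajectory, and ``nonexpansiveness plus the obtuse-angle property'' cannot produce it: those properties compare distances, they do not differentiate a projection whose image may sit on $\partial\bar\Omega(t)$. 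The paper's proof hinges on a case split --- $s'(t)\in\partial\bar\Omega(t)$ versus $s'(t)\in\int(\bar\Omega(t))$ --- together with the tangent-space formula $\dot s'(t)=P_{\mathcal{T}_{s'(t)}}\big(\dot{\bar\eta}(t)\big)$ (using the spaces $\mathcal{T}_{s}$, $\mathcal{D}_{s}$, $\mathcal{M}_{s}$ constructed there) in the boundary case, and simply $\dot s'(t)=\dot{\bar\eta}(t)$ in the interior case. It is exactly this decomposition that yields the contraction term $-\beta\|s'(t)-\bar\eta(t)-b(t)\|^2$ for $V_1$, i.e., contraction of the normal component --- which you yourself note is indispensable, since contraction of $V_2$ alone does not dominate $V_{\eta'}$. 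Your plan never says how $\dot V_1$ is to be computed, so the central technical step of the lemma is missing.

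Second, you misattribute the role of the curvature bound $M_1$. It does not control a rotation of the projection operator caused by the temporal motion of $\bar\Omega(t)$; the drift of the constraint set is absorbed entirely through $\dot\eta^*(t)$ (and $\|\dot b(t)\|\le 2\|\dot\eta^*(t)\|$). In the paper, $M_1$ enters in the boundary case when comparing normal-space projections at two \emph{different points of the same surface at a fixed time}, namely $\big\|P_{\mathcal{D}_{s'(t)}}\big(\alpha\nabla f(s'(t))\big)-P_{\mathcal{D}_{y^*}}\big(\alpha\nabla f(y^*(t))\big)\big\| \le \alpha\big(L_{\max}+M_1\|\nabla f(y^*(t))\|\big)\|\eta'(t)\|$, which is how the term $\alpha^2\big(L_{\max}+\|\nabla f(y^*(t))\|M_1\big)^2$ inside $\beta_1(\alpha)$ arises. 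So while your assembly step is right, the proposal as written would not go through without supplying the boundary/interior case analysis and the tangent-cone differentiation of $s'(t)$, which is precisely where the difficulty of this lemma lives.
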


\begin{proof}
First, we will prove that the derivative of $V_{\eta'}(t)$ satisfies the following inequality by considering two cases.

\begin{align}\label{khakw}
\frac{d V_{\eta'}(t)}{dt} &\leq -\beta_1(\alpha) \beta V_{\eta'}(t) + \beta_2(\alpha)\beta\|\boldsymbol{\widetilde\eta}(t)\|^2 \nonumber \\
&\quad + \beta_3(\alpha)\beta\|\boldsymbol{e}(t)\|^2 + \beta_4(\alpha)\beta\|\dot{\eta}^*(t)\|^2.
\end{align}

$\bold{Case~1:~s'(t)\in \partial\bar\Omega(t).}$


According to the equation \eqref{24e1}, we have
 \begin{eqnarray}\label{jkas}
    \dot{ \bar{\eta}}(t)\!\!\!\!&=&\!\!\!\!\beta\bar{s}(t)-\beta\bar{\eta}(t)-\beta\alpha\bar z(t)\nonumber\\
    \!\!\!\!&=&\!\!\!\!\beta s'(t)-\beta \bar{\eta}(t)-\alpha\beta \nabla f(s'(t))\nonumber\\
   \!\!\!\!&&\!\!\!\! +\beta \Delta_s(t)+\beta \Delta_{z}(t).
 \end{eqnarray}

Next, it follows from \eqref{adsaa} that the derivative of $V_1(t)$ satisfies
\begin{eqnarray}\label{kladk}
    \frac{dV_1(t)}{dt}=\langle s'(t)-\bar\eta(t)-b(t),  \dot{ s}'(t)-\dot{\bar \eta}(t)-\dot{b}(t) \rangle.
\end{eqnarray}

Furthermore, let's define the tangent space and the cotangent space for $\partial\bar \Omega(t)$ at a point $s\in\partial \bar\Omega(t)$ as follows:
\begin{eqnarray}
 &&\mathcal{T}_{s}=\{x\in\mathbb{R}^q~\big|x=ky,~y=\lim_{n\rightarrow\infty}\frac{s_n-s}{\|s_n-s\|},~\{s_n\}  \nonumber\\
    &&~~~~~~~~~~~~~~~~~~~~~~~\text{is~a~sequence~belonging~to}~\partial\bar \Omega(t)\nonumber\\
&&~~~~~~~~~~~~~~~\text{and~satisfying~that}~\lim_{n\rightarrow\infty}s_n=s,~k\in\mathbb{R}\}\nonumber\\
    &&\mathcal{D}_{s}=\{x\in\mathbb{R}^q~\big|x^Ty=0,~\forall y \in\mathcal{T}_{s}\}.
\end{eqnarray}

According to the above definition, for any $s\in\partial \bar\Omega(t)$,
\begin{eqnarray}\label{kkhad}
P_{\mathcal{T}_{s}}(x+y)\!\!\!\!&=& \!\!\!\!P_{\mathcal{T}_{s}}(x),~\forall~x\in\mathbb{R}^q~,y\in\mathcal{D}_{s}\nonumber\\
   P_{\mathcal{T}_{s}}(x+y)\!\!\!\!&=& \!\!\!\!P_{\mathcal{T}_{s}}(x)+P_{\mathcal{T}_{s}}(y),~\forall~x~,y\in\mathbb{R}^q.
\end{eqnarray}

Define another space at point $s\in\partial \bar\Omega(t)$ as follows:
\begin{eqnarray}
 &&\mathcal{M}_{s}=\{x\in\mathbb{R}^q~\big|x=ky,~y=\lim_{n\rightarrow\infty}\frac{s_n-s}{\|s_n-s\|},~\{s_n\}  \nonumber\\
    &&~~~~~~~~~~~~~~\text{is~a~sequence~belonging~to}~ \bar\Omega(t)\nonumber\\
&&~~~\text{and~satisfying~that}~\lim_{n\rightarrow\infty}s_n=s,~k\in\mathbb{R}\}/\mathcal{T}_{s}.
\end{eqnarray}

When $\dot{s}'(t)\in\mathcal{M}_{s'(t)}$, considering the continuity of $\bar\eta(t)$ with respect to $t$, we can conclude that $\bar\eta(t)\in\partial\bar \Omega(t)$, i.e., $\bar\eta(t)=s_i(t)$. In this case, $\dot{\bar\eta}'(t)=\dot{s}'(t)\in\mathcal{M}_{s'(t)}$. Therefore, there exists an $t_1>t$ such that $\bar\eta(t')\in \Ri\left(\bar \Omega(t)\right),~\forall ~t'\in(t,t_1)$. This implies  $\dot{s}'(t)=\dot{\bar\eta}(t)$, i.e., \eqref{klbn} in Case 2 holds. The remainder of the proof in this case is analogous to that in next case. Therefore, in this case, we assume that $\dot{s}'(t)\notin\mathcal{M}_{s'(t)}$.

When  $\dot{s}'(t)\notin\mathcal{M}_{s'(t)}$, we have
\begin{flalign}
    &P_{\bar\Omega(t)}(\bar\eta(t+\Delta t))=P_{\bar\Omega(t)}\big(\bar\eta(t)+\dot{\bar\eta}(t)\Delta t+\sigma(\Delta t)\big)\nonumber\\
    &=P_{\Omega}\big(\bar\eta(t)\big)+P_{\mathcal{T}_{s'(t)}}\Big(\dot{\bar\eta}(t)\Big)\Delta t+\sigma(\Delta t)
\end{flalign}
where $\sigma(\Delta t)$ is of a higher order than $\Delta t$.
It implies
\begin{eqnarray}
    \dot{s}'(t)=P_{\mathcal{T}_{s'(t)}}\Big(\dot{\bar\eta}(t)\Big).
\end{eqnarray}
Since $\bar\eta(t)- s'(t)\in\mathcal{D}_{s'(t)},$ it follows from \eqref{jkas} and \eqref{kkhad} that
\begin{flalign}
   \dot{s}'(t)=&P_{\mathcal{T}_{s'(t)}}\Big(-\alpha\beta\nabla f(s'(t))+\beta\Delta_s(t)+\beta\Delta_{z}(t)\Big)\nonumber\\
    =&\beta P_{\mathcal{T}_{s'(t)}}\Big(-\alpha\nabla f(s'(t))\Big)+\beta P_{\mathcal{T}_{s'(t)}}\Big(\Delta_s(t)+\Delta_{z}(t)\Big).\nonumber\\
\end{flalign}

Using \eqref{kladk}, we have

\begin{flalign}\label{hak3}
   \frac{dV_1(t)}{dt}=&\beta\bigg\langle  s'(t)-\bar \eta(t)-b(t),  -P_{\mathcal{T}_{s'(t)}}\Big(\alpha\nabla f(s'(t))\Big)\nonumber\\
 &+ P_{\mathcal{T}_{s'(t)}}\Big(\Delta_s(t)+\Delta_{z}(t)\Big)- s'(t)+\bar\eta(t))\nonumber\\
   &+\alpha \nabla f(s'(t)) -\Delta_s(t)-\Delta_{z}(t)-\dot{b}(t)\bigg\rangle\nonumber\\
    \leq&-\beta\| s'(t)-\bar\eta(t)-b\|^2+\beta\Big\langle s'(t)-\bar\eta(t)-b(t)\nonumber\\
   &-\dot{b}(t)+P_{\mathcal{D}_{s'(t)}}\Big(\alpha\nabla f(s'(t))\Big)+b(t)\nonumber\\
    &+P_{\mathcal{T}_{s'(t)}}\Big(\Delta_s(t)+\Delta_{z}(t)\Big)-\Delta_s(t)-\Delta_{z}(t)\Big\rangle.
\end{flalign}

Since $b(t) = y^*(t) - \eta^*(t)$, it follows from \eqref{ojs} that
\begin{equation}
   b(t)= -\alpha\nabla f(y^*((t)))=-P_{\mathcal{D}_{y^*(t)}}\Big(\alpha\nabla f(y^*(t))\Big).
\end{equation}
This implies
\begin{flalign}
   & \Big\|P_{\mathcal{D}_{s'(t)}}\Big(\alpha\nabla f(s'(t))\Big)+b(t)\Big\|\nonumber\\
    =&\Big\|P_{\mathcal{D}_{s'(t)}}\Big(\alpha\nabla f(s'(t))\Big)-P_{\mathcal{D}_{y^*}}\Big(\alpha\nabla f(y^*(t))\Big)\Big\|\nonumber\\
    \leq&\Big\|P_{\mathcal{D}_{s'(t)}}\Big(\alpha\nabla f(s'(t))\Big)-P_{\mathcal{D}_{s'(t)}}\Big(\alpha\nabla f(y^*(t))\Big)\Big\|\nonumber\\
    &+\Big\|P_{\mathcal{D}_{s'(t)}}\Big(\alpha\nabla f(y^*(t))\Big)-P_{\mathcal{D}_{y^*}}\Big(\alpha\nabla f(y^*(t))\Big)\Big\|\nonumber\\
    \leq&\alpha\|\nabla f(s'(t))-\nabla f(y^*(t))\|+\alpha M_1\|\nabla f(y^*(t))\|\|{\eta}'(t)\|\nonumber\\
    \leq&\alpha\Big(L_{max}+\|\nabla f(s^*(t))\|M_1\Big)\|{\eta}'(t)\|
\end{flalign}
where $M_1$ represents an upper bound for the normal curvature of $\partial\bar\Omega(t)$.

Next, we proceed to upper bound $\|\Delta_s(t)\|^2$ and $\|\Delta_z(t)\|^2$.

\begin{flalign}\label{48a}
    \|\Delta_s(t)\|^2=&\|\bar s(t)-s'(t)\|^2\nonumber\\
  =&\left\|P_{\bar\Omega(t)}\left(\sum_{i=1}^N\frac{\eta_i(t)}{N}\right)- \sum_{i=1}^N P_{\Omega}(\eta_i(t))/N\right\|^2\nonumber\\
  \leq&\frac{1}{N}\sum_{i=1}^N\|P_{\bar\Omega(t)}(\bar\eta(t))-  P_{\bar\Omega(t)}(\eta_i(t))\|^2\nonumber\\
  \leq&\frac{1}{N}\sum_{i=1}^N\|\bar\eta(t)- \eta_i(t)\|^2=\frac{1}{N}\|\boldsymbol{\widetilde{\eta}}(t)\|^2.
\end{flalign}

Similarly,
\begin{flalign}\label{49a}
    \|\Delta_z(t)\|^2=&\frac{\alpha^2}{N^2}\left\|  \sum_{i=1}^N \nabla f_i(s'(t))-\sum_{i=1}^N\nabla f_i(y_i(t)) \right\|^2\nonumber\\
 \leq& \frac{\alpha^2L^2_{max}}{N} \sum_{i=1}^N2\left(\|s_i(t)-s'(t)\|^2+\|e_i(t)\|^2\right)\nonumber\\
\leq& \frac{\alpha^2L^2_{max}}{N} \sum_{i=1}^N2\left(\|\eta_i(t)-\bar\eta(t)\|^2+\|e_i(t)\|^2\right)\nonumber\\
=&\frac{2\alpha^2L^2_{max}}{N}\left(\|\boldsymbol{\widetilde\eta}(t)\|^2+\|\boldsymbol{e}(t)\|^2\right).
\end{flalign}

Given that $0_q\in\mathcal{T}_{s'(t)}$, we have
\begin{flalign}
    &\Big\|P_{\mathcal{T}_{s'(t)}}\Big(\Delta_s(t)+\Delta_{z}(t)\Big)-\Delta_s(t)-\Delta_{z}(t)\Big\|\nonumber\\
    \leq&\|\Delta_s(t)+\Delta_{z}(t)\|\leq \|\Delta_s(t)\|+\|\Delta_{z}(t)\|.
\end{flalign}

Furthermore, since $b(t) = s^*(t) - \eta^*(t)$, it follows that
\begin{eqnarray}\label{klaa}
   \|\dot{b}(t)\|\leq\|\dot{\eta}^*(t)\|+\|\dot{s}^*(t)\|\leq 2\|\dot{\eta}^*(t)\|.
\end{eqnarray}

By the Young inequality and \eqref{hak3}, we have
\begin{flalign}\label{haaak3zz}
    \frac{dV_1(t)}{dt}\leq&-\frac{\beta}{3}\| s(t)-\eta(t)-b(t)\|^2\nonumber\\
   &+\beta\alpha^2\Big(L_{max}+\|\nabla f(s^*(t))\|M_1\Big)^2\|{\eta}'(t)\|^2\nonumber\\
    &+(2+4\alpha^2L^2_{max})\beta
    \|\boldsymbol{\widetilde{\eta}}(t)\|^2/N\nonumber\\
   &+\frac{4\beta\alpha^2L^2_{max}}{N}\|\boldsymbol{e}(t)\|^2+6\beta\|\dot{\eta}^*(t)\|^2.
\end{flalign}

Using \eqref{jkas}, we obtain
\begin{flalign}\label{havaak3}
  \frac{dV_2(t)}{dt} =& \beta\Big\langle  \eta'(t),s'(t)-\bar\eta(t)-\alpha \nabla f(s'(t))\nonumber\\
   & +\Delta_s(t)+\Delta_{z}(t)\Big\rangle+\left\langle \eta'(t), \dot{\eta}^*(t)\right\rangle\nonumber\\
    = &\beta\Big\langle  \eta'(t),s'(t)-\bar\eta(t)-\alpha \nabla f(s'(t))-y^*(t)\nonumber\\
    & +\eta^*(t)+\alpha\nabla f(y^*(t))+\Delta_s(t)+\Delta_{z}(t)+\dot{\eta}^*(t)\Big\rangle\nonumber\\
   = &\beta\Big\langle \eta'(t),s'(t)-{\eta}'(t)-\alpha \nabla f(s'(t))-y^*(t)\nonumber\\
    & +\alpha\nabla f(s^*(t))+\Delta_s(t)+\Delta_{z}(t)+\dot{\eta}^*(t)\Big\rangle.
\end{flalign}

According to the projection inequality, $\|s'(t)-y^*(t)\|^2\leq\|\eta'(t)\|^2$. This enables us to deduce from \eqref{48a} and \eqref{49a} that
\begin{flalign}\label{hqqaaak3}
 & \langle \eta'(t),s'(t)-\eta'(t)-y^*(t)+\Delta_s(t)+\Delta_{z}(t)\rangle\nonumber\\
     = &-\|\eta'(t)\|^2+\big\langle \eta'(t),s'(t)-y^*(t)+\Delta_s(t)+\Delta_{z}(t)+\dot{\eta}^*(t)\big\rangle\nonumber\\
     \leq& -\frac{1}{2}\|{\eta}'(t)\|^2+\frac{1}{2}\|s'(t)-y^*(t)\|\nonumber\\
    &+\big\langle \eta'(t),\Delta_s(t)+\Delta_{z(t)}+\dot{\eta}^*(t)\big\rangle\nonumber\\
     \leq &\frac{\sigma\alpha}{4}\|{\eta}'(t)\|^2+\frac{3}{\sigma\alpha}\left(\|\Delta_s(t)\|^2+\|\Delta_{z(t)}\|^2+\|\dot{\eta}^*(t)\|^2\right)\nonumber\\
    \leq &\frac{\sigma\alpha}{4}\|{\eta}'(t)\|^2+\frac{3}{\sigma\alpha}\Bigg(\frac{2\alpha^2L^2_{max}+1}{N}\|\boldsymbol{\widetilde\eta}(t)\|^2\nonumber\\
    &+\frac{2\alpha^2L^2_{max}}{N}\|\boldsymbol{e}(t)\|^2+\|\dot{\eta}^*(t)\|^2\Bigg).
\end{flalign}
Since
\begin{eqnarray}\label{hqqaaak3aa}
   && n'(t)=\bar\eta(t)-n^*(t)=\bar\eta(t)+b(t)-s^*(t)
\end{eqnarray}
we have
\begin{flalign}\label{hqqaaaka3}
 &\langle \eta'(t),-\alpha \nabla f(s'(t))+\alpha \nabla f(y^*(t))\rangle\nonumber\\
    =&\big\langle \bar\eta(t)-s'(t)+b(t),-\alpha \nabla f(s'(t))+\alpha \nabla f(y^*(t))\big\rangle\nonumber\\
    &+\big\langle s'(t)-s^*(t),-\alpha \nabla f(s'(t))+\alpha \nabla f(y^*(t))\big\rangle.
\end{flalign}

With the application of Young inequality,
\begin{flalign}
& \langle \bar\eta(t)-s'(t)+b(t),-\alpha \nabla f(s'(t))+\alpha \nabla f(y^*(t))\rangle\nonumber\\
    \leq &\frac{1}{6}\|s'(t)-\bar\eta(t)-b(t)\|^2+3\alpha^2\|\nabla f(s'(t))- \nabla f(y^*(t))\|^2\nonumber\\
    \leq& \frac{1}{6}\|s'(t)-\bar\eta(t)-b(t)\|^2+3\alpha^2L_{max}^2\|s'(t)-y^*(t)\|^2\nonumber\\
   \leq& \frac{1}{6}\|s'(t)-\bar\eta(t)-b(t)\|^2+3\alpha^2L_{max}^2\|\eta'(t)\|^2.
\end{flalign}

For any $a,~b\in\mathbb{R}^p,$
\begin{eqnarray}\label{hkda}
    -\|a-b\|^2= \!-\|a\|^2+2a^Tb-\|b\|^2\leq\!-\frac{\|a\|^2}{2}\!+\!\|b\|^2.
\end{eqnarray}

Recalling that the objective function $f$ is $\sigma$-strongly convex, where $\sigma=\sum_{i=1}^N\sigma_i/N,$
it follows from \eqref{hkda} that
\begin{flalign}
  & \Big\langle s'(t)-y^*(t),-\alpha \nabla f(s'(t))+\alpha \nabla f(y^*(t))\Big\rangle\nonumber\\
    \leq& -\sigma\alpha \|s'(t)-y^*(t)\|^2\nonumber\\
   = &-\sigma\alpha \left\|\bar\eta(t)-\eta^*(t)-\left(\bar\eta(t)-s'(t)+b(t)\right)\right\|^2\nonumber\\
\leq& -\frac{\sigma\alpha}{2}\|\bar\eta(t)-\eta^*(t)\|^2+\sigma\alpha\|\bar\eta(t)-s'(t)+b(t)\|^2.
\end{flalign}

Substituting \eqref{hqqaaak3} and \eqref{hqqaaaka3} into \eqref{havaak3} yields
\begin{flalign}\label{klassa}
\frac{dV_2(t)}{dt}\leq& -\beta\left(\frac{\sigma\alpha}{4}-3\alpha^2L_{max}^2\right)\|\bar\eta(t)-\eta^*(t)\|^2\nonumber\\
    &+\beta\left(\sigma\alpha+\frac{1}{6}\right)\|\bar\eta(t)-s'(t)-b\|^2\nonumber\\
    &+\frac{3\beta}{\sigma\alpha}\Bigg(\frac{2\alpha^2L^2_{max}+1}{N}\|\boldsymbol{\widetilde\eta}(t)\|^2\nonumber\\
    &+\frac{2\alpha^2L^2_{max}}{N}\|\boldsymbol{e}(t)\|^2+\|\dot{\eta}^*(t)\|^2\Bigg).
\end{flalign}

By combining \eqref{haaak3zz} and \eqref{klassa}, we conclude
\begin{flalign}\label{lghje}
 \frac{dV_{\eta'}(t)}{dt} \leq & -\left[\frac{\sigma\alpha}{4}-\alpha^2\left(L_{max}+\|\nabla f(y^*(t))\|M_1\right)^2\right.\nonumber\\
    & -3\alpha^2L_{max}^2~\Big]\beta\|\bar\eta(t)-\eta^*\|^2\nonumber\\
    & -\left(\frac{1}{6}-\sigma\alpha\right)\beta\|\bar\eta(t)-s'(t)-b(t)\|^2\nonumber\\
    & +\beta\left(\frac{6\alpha L^2_{max}+3}{N\sigma}+\frac{2+4\alpha^2L_{max}^2}{N}\right)\|\boldsymbol{\widetilde\eta}(t)\|^2\nonumber\\
    & +\beta\left(\frac{6\alpha L^2_{max}}{\sigma N}+\frac{4\alpha^2L^2_{max}}{N}\right)\|\boldsymbol{e}(t)\|^2\nonumber\\
    & \left. +\beta\left(\frac{3}{\sigma\alpha}+6\right)\|\dot{\eta}^*(t)\|^2\right..
\end{flalign}

\textbf{Case 2: $s'(t)\in \int(\bar\Omega(t))$.}

In this case, $\bar\eta(t)\in \int(\bar\Omega(t))$. This implies  $P_{\Omega}(\bar\eta(t+\Delta t))=\bar\eta(t+\Delta t)$ and $P_{\Omega}(\bar\eta(t))=\bar\eta(t)$.
Hence,
\begin{equation}\label{klbn}
    \dot{s}'(t)=\lim_{\Delta t\rightarrow0}\frac{P_{\Omega}(\bar\eta(t+\Delta t))-P_{\Omega}(\bar\eta(t))}{\Delta t}=\dot{\bar\eta}(t).
\end{equation}

From \eqref{klaa}, it follows that
\begin{flalign}
 \frac{dV_1(t)}{dt}&=\beta\langle b(t), \dot{b}(t) \rangle\leq\frac{\beta}{8}\|b(t)\|^2+2\beta\|\dot{b}(t)\|^2\nonumber\\
  &\leq\frac{\beta}{8}\|b(t)\|^2+4\beta\|\dot{\eta}^*(t)\|^2.
\end{flalign}

The derivative of $V_2(t)$ can be described as follows:
\begin{flalign}\label{haaak3}
 \frac{dV_2(t)}{dt}=&\beta\langle {\eta}'(t),-\alpha \nabla f(\bar\eta(t))+\Delta_s(t)+\Delta_{z}(t)-\dot{\eta}^*(t)\rangle\nonumber\\
   =&\beta\langle \eta'(t),-\alpha \nabla f(\bar\eta(t))+b(t)\rangle-\beta\langle {\eta}'(t), b(t)\rangle\nonumber\\
   &+\beta\langle {\eta}'(t),\Delta_s(t)+\Delta_{z}(t)-\dot{\eta}^*(t)\rangle.
\end{flalign}

Since $\bar\Omega(t)$ is a convex set and $\bar\eta(t)\in\bar\Omega(t)$, we have
\begin{flalign}\label{uiape}
& \langle\bar\eta(t)-s^*(t), b(t)\rangle \nonumber \\
   =&\langle\bar\eta(t)-P_{\Omega}(\eta^*(t)),P_{\bar\Omega(t)}(\eta^*(t))-\eta^*(t)\rangle\geq0.
\end{flalign}
Hence,
\begin{flalign}\label{jladad}
\langle {\eta}'(t), b(t)\rangle=& \langle\bar\eta(t)-\eta^*(t), b(t)\rangle\nonumber\\
= & \langle\bar\eta(t)-y^*(t)+b(t), b(t)\rangle\geq \|b(t)\|^2.
\end{flalign}

Using the Young inequality, we have
\begin{flalign}\label{kafs1}
 &\langle b(t),-\alpha \nabla f(\bar\eta(t))+b(t)\rangle\nonumber\\
  = &\langle b(t),-\alpha \nabla f(\bar\eta(t))+\alpha \nabla f(y^*(t))\rangle\nonumber\\
 \leq&\frac{1}{2}\|b(t)\|^2+\frac{1}{2}\alpha^2L_{max}^2\|\bar\eta(t)-y^*(t)\|^2.
\end{flalign}

Considering the strong convexity of the objective function $f$, we get
\begin{flalign}\label{kafs2}
 &\langle s'(t)-y^*(t),-\alpha \nabla f(\bar\eta(t))+b(t)\rangle\nonumber\\
  =&\langle \bar\eta(t)-y^*(t),-\alpha \nabla f(\bar\eta(t))+\alpha \nabla f(y^*(t))\rangle\nonumber\\
 \leq&-\sigma\|\bar\eta(t)-y^*(t)\|^2.
\end{flalign}

Given that $s'(t) \in \int(\bar\Omega(t))$,  $s'(t) = \bar{\eta}(t)$. It implies
\begin{flalign}
  {\eta}'(t) &= \bar{\eta}(t) - \eta^*(t) = s'(t) - \eta^*(t)\nonumber \\
               &= s'(t) - y^*(t) + b(t).
\end{flalign}

By utilizing \eqref{kafs1} and \eqref{kafs2}, we obtain
\begin{flalign}\label{kafsaad}
&\langle \eta'(t), -\alpha \nabla f(\bar{\eta}(t)) + b(t) \rangle \nonumber\\
    = &\langle s'(t) - y^*(t) + b(t), -\alpha \nabla f(\bar{\eta}(t)) + b(t) \rangle \nonumber\\
    \leq& \frac{1}{2}\|b(t)\|^2 + \frac{1}{2}\alpha^2 L_{\text{max}}^2 \|\bar{\eta}(t) - y^*(t)\|^2 \nonumber\\
    & - \sigma \|\bar{\eta}(t) - y^*(t)\|^2.
\end{flalign}

Substituting equations \eqref{jladad} and \eqref{kafsaad} into equation \eqref{haaak3} results
\begin{flalign}\label{haaak3aa}
 \frac{dV_2(t)}{dt} =& \beta\langle {\eta}'(t),-\alpha \nabla f(\eta(t))\rangle \nonumber\\
  =&-\beta\left(\sigma-\frac{1}{2}\alpha^2L_{\max}^2\right)\|\bar\eta(t)-y^*(t)\|^2-\frac{\beta}{2}\|b(t)\|^2 \nonumber\\
  &+\beta\langle {\eta}'(t),\Delta_s(t)+\Delta_{z}(t)+\dot\eta^*(t)\rangle.
\end{flalign}

Utilizing inequality \eqref{hkda}, we have
 \begin{flalign}
  &\|\bar\eta(t)-s^*(t)\|^2 = \|\bar\eta(t)-\eta^*(t)+\eta^*(t)-y^*(t)\|^2 \nonumber\\
   \geq& \frac{1}{2}\|\bar\eta(t)-\eta^*(t)\|^2 - 2\|\eta^*(t)-y^*(t)\|^2 \nonumber\\
   \geq& \frac{1}{2}\|\bar\eta(t)-\eta^*(t)\|^2 - 2\|b(t)\|^2.
\end{flalign}

By \eqref{haaak3aa}, we have
\begin{flalign}\label{haaak3aaaa}
\frac{dV_2(t)}{dt} \leq& \beta\langle {\eta}'(t),\Delta_s(t)+\Delta_{z}(t)+\dot\eta^*(t)\rangle\nonumber\\
&-\frac{\beta}{4}(\sigma_1-\alpha^2L_{\max}^2)\|\eta(t)-\eta^*(t)\|^2\nonumber \\
& -\frac{\beta}{4}\|b(t)\|^2 .
\end{flalign}
where $\sigma_1=\min\{\sigma,\frac{1}{16}\}$.

Applying the Young inequality, we have

\begin{flalign}
&\langle {\eta}'(t),\Delta_s(t)+\Delta_{z}(t)+\dot\eta^*(t)\rangle\nonumber \\
 \leq& \frac{1}{4}\alpha^2L_{\max}^2\|{\eta}'(t)\|+\frac{2}{\alpha^2L_{\max}^2}\|\Delta_s(t)+\Delta_{z}(t)+\dot\eta^*(t)\|^2.
\end{flalign}

 It implies
\begin{flalign}\label{khbf}
\frac{dV_{\eta'}(t)}{dt}=&\frac{dV_2(t)}{dt}\nonumber\\
\leq&-\frac{\beta}{2}(\sigma_1-2\alpha^2L_{\max}^2)V_2(t)-\frac{\beta}{4}V_1(t)\nonumber\\
&+\beta\left(\frac{6}{\alpha^2L_{\max}^2N^2}+\frac{12}{N^2}\right)\|\boldsymbol{\widetilde{\eta}}(t)\|^2\nonumber\\
&+\frac{12\beta}{N^2}\|\boldsymbol{e}(t)\|^2+\left(\frac{6}{\alpha^2L_{\max}^2N}+4\right)\beta\|\dot\eta^*(t)\|^2.
\end{flalign}

By combining equations \eqref{lghje} and \eqref{khbf}, we conclude that equation \eqref{khakw} always holds.
Adding inequalities \eqref{asfaaasd1}, \eqref{aklew}, and \eqref{khakw}, we have that \eqref{jkasc} is always satisfied.\end{proof}


\textbf{Proof~of~Inequality~\eqref{sma3}:}
Recalling $\widetilde{x}_{i}(t) = {x}_{i}(t)-\Pi_is_i(t)$, we have
\begin{flalign}\label{closed-loop sys 22}
\dot{\widetilde{x}}_{i}(t)=&\dot{x}_{i}(t)-\Pi_i \dot{s}_i(t)\nonumber\\
=& (A_i+B_iK_{1i})x_{i}(t)+B_iK_{2i}s_i(t)-\Pi_i \dot{s}_i(t)\nonumber\\
=&(A_i+B_iK_{1i})\widetilde{x}_{i}(t)-\Pi_i \dot{s}_i(t).
\end{flalign}

From \eqref{closed-loop sys 22},
\begin{flalign}\label{kaaasd}
 \|\dot{e}_i(t)\| =&\|C_i\dot{\widetilde{x}}_{i}(t)\|\nonumber\\
  \leq&\|C_i(A_i+B_iK_{1i})\widetilde{x}_{i}(t)\|+\|C_i\Pi_i\dot{s}_i(t)\|.
  \end{flalign}

Similarly to \eqref{uhww}, we have
\begin{flalign}\label{uhwkw}
\|\dot{\boldsymbol{\eta}}(t)\|\leq (\lambda_N k_1+2)\beta \|\boldsymbol{\widehat\eta}(t)\|+\alpha\beta\|\boldsymbol{\widehat z}(t)\|.
\end{flalign}

Considering that $\|\dot{s}_i(t)\|\leq\|\dot{\eta}_i(t)\|$, it follows from \eqref{uhwkw} and \eqref{kaaasd} that
\begin{flalign}\label{kaaassad}
 \|\dot{\boldsymbol{e}}(t)\|\leq&\max_{i\in\mathcal{V}}\|C_i(A_i+B_iK_{1i})\|\|\boldsymbol{\widetilde{x}}(t)\|\nonumber\\
&+\max_{i\in\mathcal{V}}\|C_i\Pi_i\| (\lambda_N k_1+2)\beta \|\boldsymbol{\widehat\eta}(t)\|\nonumber\\
 &+\max_{i\in\mathcal{V}}\|C_i\Pi_i\|\alpha\beta\|\boldsymbol{\widehat z}(t)\|.
  \end{flalign}
By \eqref{jklaw},
\begin{flalign}\label{kaaassads}
 \|\dot{\boldsymbol{e}}(t)\|\leq&\max_{i\in\mathcal{V}}\|C_i(A_i+B_iK_{1i})\|\|\boldsymbol{\widetilde{x}}(t)\|\nonumber\\
&+\max_{i\in\mathcal{V}}\|C_i\Pi_i\| (\lambda_N k_1+2+L_{max}\alpha)\beta \|\widehat\eta(t)\|\nonumber\\
& \!+\max_{i\in\mathcal{V}}\|C_i\Pi_i\|\alpha\beta\left(\|\boldsymbol{\widetilde z}(t)\|+L_{max}\|\boldsymbol{e}(t)\|\right).
  \end{flalign}

 Since $\widehat\eta_i(t)=\widetilde{\eta}_i(t)+\eta'(t),$
\begin{flalign}\label{kaaassad}
 \|\boldsymbol{\widehat\eta}(t)=&\|\boldsymbol{\widetilde{\eta}}(t)+\boldsymbol{1}_N\otimes \eta'(t)\|\nonumber\\
\leq &\|\boldsymbol{\widetilde{\eta}}(t)\|+\sqrt{N}\|\eta'(t)\|\leq\sqrt{2}\|\boldsymbol{\theta}(t)\|.
 \end{flalign}

Additionally, as $\nabla f_i$ is $L_i$-Lipschitz, we have $\|\dot{z}^*(t)\|\leq L_{max}\|\dot{y}^*(t)\|$. Furthermore, based on \eqref{hqqaaak3aa}, we can conclude that $\|\dot{\eta}^*(t)\|\leq (L_{max}+1)\|\dot{y}^*(t)\|$.

Then we can rewrite \eqref{jkasc} as follows.
\begin{flalign}\label{klasas1}
\frac{d V_{\boldsymbol\theta}(t)}{dt} &\leq -\beta(\gamma_{\eta}-\beta\gamma_{\theta}')V_{\boldsymbol\theta}(t) + \beta\gamma^{\eta^*}_{\eta}(L_{max}+1)\|\dot{y}^*(t)\|^2 \nonumber \\
&\quad + \beta\gamma_{\theta}^{\tilde x}\|\boldsymbol{\widetilde{x}}(t)\|^2.\end{flalign}

Given $\beta\leq \frac{\gamma_{\eta}}{2\gamma_{\theta}'},$ it follows from \eqref{klasas1} that
\eqref{sma3} holds.

 \section{{Proof~of~Inequality~\eqref{sma2}}}\label{2ap2}
 Let $V_{\boldsymbol{\widetilde{x}}}(\boldsymbol{\widetilde{x}},t)=\|\boldsymbol{\widetilde{x}}(t)\|^2.$
 By \eqref{closed-loop sys 22} and Young inequality, we have
\begin{flalign}\label{kaacassd}
\frac{d V_{\boldsymbol{\widetilde{x}}}(\boldsymbol{\widetilde{x}},t)}{dt} &\leq -cV_{\boldsymbol{\widetilde{x}}}(\boldsymbol{\widetilde{x}},t)\nonumber\\
&\quad -2\sum_{i=1}^N\Big\langle P_i({x}_{i}(t)-\Pi_is_i(t)),~\Pi_i\dot{s}_i(t)\Big\rangle \nonumber\\
&\leq -\frac{c}{2}V_{\boldsymbol{\widetilde{x}}}(\boldsymbol{\widetilde{x}},t)+\sum_{i=1}^N\frac{2\|P_i\|\|\Pi_i\|^2}{c}\|\dot{s}_i(t)\|.
\end{flalign}

As a result,
\begin{flalign}\label{kaloaaasd}
  \|\boldsymbol{e}(t)\|^2=\sum_{i=1}^N\|C_i\widetilde{x}_{i}(t)\|^2\leq\frac{C^2_{max}}{\lambda_{min}(P_i)}V_{x}(x,t)
  \end{flalign}
where $C_{max}:=\max_{i\in\mathcal{V}}\|C_i\|$ and $\lambda_{min}(P_i)$ denotes the smallest eigenvalue of the matrix $P_i.$
 Substituting  \eqref{klasacas} and \eqref{uhawwqada} into \eqref{kaacassd} yields
\begin{flalign}\label{kasasdd}
\frac{d V_{\boldsymbol{\widetilde{x}}}(\boldsymbol{\widetilde{x}},t)}{dt} \leq& -\frac{c}{2}V_{\boldsymbol{\widetilde{x}}}(\boldsymbol{\widetilde{x}},t)+\sum_{i=1}^N\frac{2\|P_i\|\|\Pi_i\|^2}{c}\Big((3\lambda_N^2 k_1^2\nonumber\\
& +12+9\alpha^2L^2_{max})\beta^2 \|\widehat\eta_i(t)\|^2+9\beta^2\alpha^2\|\widetilde z_i(t)\|^2\nonumber\\
 &+\frac{9\beta^2\alpha^2L^2_{max}C^2_{max}}{\lambda_{min}(P_i)}\|e_i(t)\|^2\Big).
  \end{flalign}

  Substituting  \eqref{kaloaaasd} into \eqref{kasasdd}, we have
\begin{flalign}\label{hkag1}
\frac{d V_{\boldsymbol{\widetilde{x}}}(\boldsymbol{\widetilde{x}},t)}{dt} \leq&- \left(\frac{c}{2}-\gamma_{x}\beta^2\right)V_{\boldsymbol{\widetilde{x}}}(\boldsymbol{\widetilde{x}},t) \nonumber\\
&+ \beta^2\gamma^{\widehat\eta}_{x}\|\boldsymbol{\widehat\eta}(t)\|^2+\beta^2\gamma^{\widetilde z}_{x}\|\boldsymbol{\widetilde z}(t)\|^2.
\end{flalign}

Given $\beta\leq\sqrt{\frac{c}{4\gamma_x}}$ and \eqref{hkag1}, we have \eqref{sma2} holds.

 \section{{Proof~of~Lemma~\ref{lem523}}}\label{2ap4}

Given $x_i(t)\in\mathcal{R}_{s_i}(t)$, we have
\begin{eqnarray}
    \|x_i(t)-\Pi_is_i(t)\|\leq \frac{r(t)}{\|C_i\|}.
\end{eqnarray}
where $\Pi_i$ is defined the same as in  Lemma~\ref{lem31}.

By using the second equation of \eqref{ch4ass7}, the following inequality always holds:
\begin{equation}
\begin{aligned}
\|y_i(t)-s_i(t)\|&=\|C_ix_i(t)-s_i(t)\|\\
&=\|C_ix_i(t)-C_i\Pi_is_i(t)\|\\
&\leq\|C_i\|\|x_i(t)-\Pi_is_i(t)\|\leq r(t).\label{111}
\end{aligned}
\end{equation}
Since $\mathcal{S}_{s_i,\bar{\Omega}}(t)$
is the $p$-dimension open ball with $s_{i}(t)$ as the center and $r(t)$ as the radius from \eqref{uhas}, we directly arrive at $y_{i}\in\mathcal{S}_{s_i,\bar{\Omega}}(t)$ from~\eqref{111}.

Given $\widetilde{x}_{i}(t) = {x}_{i}(t)-\Pi_i\eta_i(t)$ and $y_i(0)=\eta_i(0)$,  we have
\begin{eqnarray}
   \widetilde{x}_{i}(0) \!\!\!\!&=&\!\!\!\!{x}_{i}(0)-\Pi_iy_i(0)\nonumber\\
   \!\!\!\!&=&\!\!\!\!{x}_{i}(0)-\Pi_iC_ix_i(0).
\end{eqnarray}

Since $\Pi_i=\left[
        \begin{array}{c}
         I_p\\
         \bold{0_{(n_i-p )\times p }}\\
        \end{array}
      \right]$,
 \begin{eqnarray}\label{jkaaa}
\Pi_iC_i=\left[
        \begin{array}{cc}
         I_p&\bold{0_{p\times{(n_i-p)}}}\\
       \bold{0_{{(n_i-p)}\times p}}&\bold{0_{(n_i-p )\times(n_i-p )}}\\
        \end{array}
      \right]
\end{eqnarray}

According to  Assumption \ref{as42}, i.e.,  $x_{ij}(0)=0,~p<j\leq n_i$, we have $\widetilde{x}_{i}(0) =\bold{0_{n_i}}$. Hence, $x_i(0)\in\mathcal{R}_i(0)$.

 \section{{Proof~of~Lemma~\ref{smaas}}}\label{2ap5}

Since $\sqrt{\frac{cH_1}{8\beta_1\xi M'}}$ and $v\leq\frac{c}{8}$,
\begin{eqnarray}\label{opas}
   \dot{\beta}_2(t)\geq-\frac{c}{8}\beta_2(t),~ \frac{c}{8}\beta_2(t)\geq \beta^2M'V_{\boldsymbol\theta}(t).
\end{eqnarray}

Next, let us analyze the time derivative of $ h_i(x_i,s_i,t)$,
\begin{flalign}\label{96as}
  \frac{d h_i(x_i,s_i,t)}{dt}=&\frac{\partial h_i(x_i,s_i,t)}{\partial{x}^T_i(t)}(A_ix_i(t)+B_iu_i(t))\nonumber\\
&+\frac{\partial h_i(x_i,s_i,t)}{\partial{s}^T_i(t)}\dot{s}_i(t)+\frac{\partial h_i(x_i,s_i,t)}{\partial{t}}.
  \end{flalign}
Consider the first two terms in \eqref{96as},
\begin{flalign}
&\frac{\partial h_i(x_i(t),t)}{\partial{x}^T_i(t)}\left(A_ix_i(t)+B_iu_i(t)\right)+\frac{\partial h_i(x_i,s_i,t)}{\partial{s}^T_i(t)}\dot{s}_i(t)\nonumber\\
=&-2\left\langle P_i\left({x}_{i}(t)-\Pi_is_i(t)\right),~A_ix_i(t)+B_iu_i(t)\right\rangle\nonumber\\
&+2\left\langle P_i\left({x}_{i}(t)-\Pi_is_i(t)\right),~\Pi_i\dot{s}_i(t)\right\rangle\nonumber\\
=&-2\left\langle P_i\widetilde{x}_i(t),~(A_i+B_iK_{1i})\widetilde{x}_{i}(t)-\Pi_i \dot{s}_i(t)\right\rangle.
  \end{flalign}

Using the symmetry of matrix $P_i$ and the Lyapunov equation \eqref{ria12}, we have
\begin{flalign}\label{jkda}
&-2\left\langle P_i\widetilde{x}_i(t),~(A_i+B_iK_{1i})\widetilde{x}_{i}(t)\right\rangle\nonumber\\
=&-\widetilde{x}^T_{i}(t)((A_i+B_iK_{1i})^TP_i+P_i(A_i+B_iK_{1i}))\widetilde{x}_{i}(t)\nonumber\\
=&c\widetilde x_i(t)^TP_i\widetilde x_i(t).
  \end{flalign}
Clearly,
\begin{eqnarray}\label{klasacas}
 \|\dot{s}_i(t)\|\!\!\!\!&=& \!\!\!\! \lim_{\Delta t\rightarrow0}\frac{\|P_{\bar\Omega(t)}(\eta_i(t+\Delta t))-P_{\bar\Omega(t)}(\eta_i(t))\|}{\Delta t}\nonumber\\
 \!\!\!\!&\leq& \!\!\!\! \lim_{\Delta t\rightarrow0}\frac{\|\eta_i(t+\Delta t)-\eta_i(t)\|}{\Delta t}=\|\dot{\eta}_i(t)\|.
\end{eqnarray}
Utilizing the Young inequality, we have
\begin{eqnarray}\label{132}
 &&~~~~2\left\langle P_i\widetilde{x}_i(t),\Pi_i \dot{s}_i(t)\right\rangle\nonumber\\
&& \geq-\frac{c}{2}\widetilde x_i(t)^TP_i\widetilde x_i(t)-\frac{2}{c}\|P_i\|\|\Pi_i\|^2\| \dot{s}_i(t)\|^2\nonumber\\
&& \geq-\frac{c}{2}\widetilde x_i(t)^TP_i\widetilde x_i(t)\!-\!\frac{2}{c}\|P_i\|\|\Pi_i\|^2\| \dot{\eta}_i(t)\|^2.
\end{eqnarray}

  Substituting \eqref{jkda}, \eqref{132} into \eqref{96as} yields
\begin{align}
\frac{d h_i(x_i,s_i,t)}{dt} \geq&~ \dot{\beta}_2(t) + \frac{c\widetilde x_i(t)^TP_i\widetilde x_i(t)}{2} \nonumber \\
&- \frac{2}{c}\|P_i\|\|\Pi_i\|^2\| \dot{\eta}_i(t)\|^2.
\end{align}

Combining \eqref{uhww} and \eqref{jklaw}, we have
\begin{align}\label{uhawwqada}
\|\dot{\boldsymbol{\eta}}(t)\|^2 \leq&~ (3\lambda_N^2 K_1^2+12+9\alpha^2L^2_{max})\beta^2 \|\boldsymbol{\widehat\eta}(t)\|^2 \nonumber\\
&+9\beta^2\alpha^2\|\boldsymbol{\widetilde z}(t)\|^2+9\beta^2\alpha^2L^2_{max}\|\boldsymbol{e}(t)\|^2.
\end{align}

It implies
\begin{align}
  \frac{d h(\boldsymbol x,\boldsymbol s,t)}{dt} \geq &\sum_{i=1}^N\frac{c\widetilde x_i(t)^TP_i\widetilde x_i(t)}{2} - \beta^2M'V_{\boldsymbol\theta}(t) \nonumber\\
  &-\frac{18\beta^2}{c}\alpha^2L^2_{max}\|P_i\|\|\Pi_i\|^2\|\|\boldsymbol{e}(t)\|^2.
\end{align}

Since
\begin{equation}\label{uhwwq}
\begin{aligned}
y_i(t)-\eta_i(t)=C_ix_i(t)-C_i\Pi_i\eta_i(t)=C_i{\widetilde{x}}_{i}(t)
\end{aligned}
\end{equation}
we have
\begin{equation}\label{whww}
\begin{aligned}
\|\boldsymbol{e}(t)\|^2=\sum_{i=1}^N\|y_i(t)-\eta_i(t)\|^2\leq C^2_{max}\|\boldsymbol{\widetilde{x}}(t)\|^2.
\end{aligned}
\end{equation}


Denote $V_{\boldsymbol\theta,\boldsymbol{\widetilde{x}}}(t):=V_{\boldsymbol\theta}(t)+V_{\boldsymbol{\widetilde{x}}}(\boldsymbol{\widetilde{x}},t)$. Since $\widetilde{x}_{i}(0) =\bold{0_{n_i}}$, $V_{\boldsymbol\theta,\boldsymbol{\widetilde{x}}}(0)=\|\boldsymbol\theta(0)\|^2/2.$

Since $\beta\leq\min\{\frac{c}{8\gamma_{\theta}^{\tilde x}},\frac{\gamma_{\theta}}{8\max\{\gamma^{\widehat\eta}_{x},2\gamma^{\boldsymbol{\widetilde z}}_{x}\}},\frac{c}{2\gamma_{\eta}}\}$ and $v=\frac{\beta\gamma_{\eta}}{4}$, it follows from  \eqref{klasas1} and \eqref{hkag1} that
\begin{align}\label{klasassa}
\frac{d V_{\boldsymbol\theta,\boldsymbol{\widetilde{x}}}(t)}{dt} &\leq -vV_{\boldsymbol\theta,\boldsymbol{\widetilde{x}}}(t) + \beta\gamma^{\eta^*}_{\eta}(L_{max}+1)\|\dot{y}^*(t)\|^2 .\end{align}

Given $\|\dot{y}^*(t)\|\leq h_2 e^{-vt}$,
\begin{align}\label{hkag}
V_{\boldsymbol\theta,\boldsymbol{\widetilde{x}}}(t)\leq \left[V_{\boldsymbol\theta,\boldsymbol{\widetilde{x}}}(0)+\frac{\beta\gamma^{\eta^*}_{\eta}(L_{max}+1)h^2}{v}\right]e^{-vt}.
\end{align}

Since $V_{\boldsymbol\theta,\boldsymbol{\widetilde{x}}}(0)=\|\boldsymbol\theta(0)\|^2/2$,
\begin{eqnarray}\label{jkdsasa}
    V_{\boldsymbol\theta}(t)\leq H_1e^{-vt}.
\end{eqnarray}

According to the proof of Theorem \ref{the31}, $\|\widehat\eta(t)\|^2+\|\widetilde z(t)\|^2\leq 2V_{\boldsymbol\theta}(t)$.
By \eqref{jkdsasa},
\begin{align}
  \frac{d h(\boldsymbol x,\boldsymbol s,t)}{dt} \geq&~ \frac{c}{2}\sum_{i=1}^N\widetilde x_i(t)^TP_i\widetilde x_i(t)-\beta^2M'V_{\boldsymbol\theta}(t) \nonumber\\
  &+\dot{\beta}(t)-\frac{18\beta^2}{c}\alpha^2L^2_{max}\kappa_{max}(P) \nonumber\\
  &\times \Pi_{max}^2C^2_{max}\sum_{i=1}^N\widetilde x_i(t)^TP_i\widetilde x_i(t) \nonumber\\
  \geq& \frac{c}{4}\sum_{i=1}^N\widetilde x_i(t)^TP_i\widetilde x_i(t)-\beta^2M'V_{\boldsymbol\theta}(t)+\dot{\beta}_2(t).
\end{align}

Since $\beta(t)$ satisfies \eqref{opas},
\begin{eqnarray*}
   \frac{d h(\boldsymbol x,\boldsymbol s,t)}{dt}+\frac{c}{8}h(\boldsymbol x,\boldsymbol s,t)\geq0.
\end{eqnarray*}
Hence, \eqref{cbf9} is a time-varying CBF.


\bibliographystyle{IEEEtran}
\bibliography{references}

\end{document}